\theoremstyle{plain}
\newtheorem{theorem}{Theorem}[section]
\newtheorem{corollary}[theorem]{Corollary}
\newtheorem{lemma}[theorem]{Lemma}
\newtheorem{proposition}[theorem]{Proposition}
\theoremstyle{definition}
\newtheorem{definition}[theorem]{Definition}
\newtheorem*{notation}{Notation}
\theoremstyle{remark}
\newtheorem{remark}[theorem]{Remark}
\newlength{\myVSpace}% the height of the box
\begin{document}

\title{SIGNATURE PAIRS FOR GROUP-INVARIANT HERMITIAN POLYNOMIALS}

\author{DUSTY GRUNDMEIER}

\address{Department of Mathematics, University of Illinois\\ 1409 W. Green St., Urbana,
IL 61801, USA\\
grundmer@uiuc.edu}

\maketitle

\begin{abstract}
We study the signature pair for certain group-invariant Hermitian polynomials arising in CR geometry.  In particular, we determine the signature pair for the finite subgroups of $SU(2)$.  We introduce the asymptotic positivity ratio and compute it for cyclic subgroups of $U(2)$.  We calculate the signature pair for dihedral subgroups of $U(2)$.
\end{abstract}

%\keywords{Hermitian polynomials, signature pairs, invariant polynomials, hyperquadrics, group-invariant CR mappings}

%\ccode{Mathematics Subject Classification 2010: 32H02, 32H35, 11C08, 11E39, 13A50}

\section{Introduction}
The purpose of this paper is to determine the signature pair (defined momentarily) for Hermitian polynomials arising from group-invariant CR mappings from spheres to hyperquadrics.  Let $\Gamma$ be a finite subgroup of the unitary group $U(n)$.  Let $S^{2n-1}$ denote the unit sphere in $\mathbb{C}^{n}$.  We assume $n\geq2$.  

A natural question is: when does there exist a non-constant $\Gamma$-invariant CR mapping from $S^{2n-1}$ to $S^{2N-1}$?  Forstneri{\v c} showed that a smooth CR mapping from $S^{2n-1}$ to $S^{2N-1}$ must be a rational mapping \cite{F2}.  He also found restrictions on the possible groups $\Gamma$ for which such a rational map exists \cite{F1}.  Lichtblau \cite{L} proved that for non-constant  $\Gamma$-invariant rational maps between spheres to exist, $\Gamma$ must be cyclic.  Later D'Angelo and Lichtblau \cite{D0,DL} answered this question by finding the complete list of cyclic $\Gamma$ for which such a rational map exists.  To do so they introduced the $\Gamma$-invariant Hermitian polynomial defined by \begin{equation}\label{e:phigamma}\Phi_{\Gamma}(z, \bar{z})=1-\prod_{\gamma \in \Gamma}{\left(1-\langle \gamma z, z \rangle \right)}.\end{equation}  This polynomial also determines, by diagonalizing its underlying Hermitian matrix of coefficients, a group-invariant CR map from a sphere to a hyperquadric \cite{D2}.  Let $N(\Gamma)$, $N^{+}(\Gamma)$, and $N^{-}(\Gamma)$ be the numbers of total eigenvalues, positive eigenvalues, and negative eigenvalues respectively of this underlying Hermitian matrix of coefficients of $\Phi_{\Gamma}$.  We refer the reader to section 2 for precise definitions.  The {\it signature pair} $S(\Gamma)$ is $$S(\Gamma)=(N^{+}(\Gamma), N^{-}(\Gamma)),$$ and the {\it positivity ratio} is $$L(\Gamma)=\frac{N^{+}(\Gamma)}{N(\Gamma)}.$$  Because the positivity ratio is often difficult to compute, we study its asymptotic behavior.  For a family of subgroups $\Gamma_p$ of $U(n)$, we define the {\it asymptotic positivity ratio} to be $$\lim_{p\to \infty}{L(\Gamma_p)}.$$  Here the index $p$ is closely related to the order of the group (see section 3).  The polynomial $\Phi_\Gamma$ canonically induces a CR mapping to a hyperquadric with $N^{+}(\Gamma)$ positive eigenvalues and $N^{-}(\Gamma)$ negative eigenvalues in its defining equation (see \cite{D2}).  We do not pursue this aspect of the polynomial $\Phi_{\Gamma}$.

The main results of this paper compute the signature pair for finite subgroups of $SU(2)$, calculate the asymptotic positivity ratio for cyclic subgroups of $U(2)$, and determine the signature pair for the dihedral groups in $U(2)$.  In this paper we work in $U(2)$; however, many of the results can be extended to $U(n)$.  The results for arbitrary $n$ will appear in the author's doctoral thesis (\cite{G}).  We first restrict to finite group subgroups $SU(2)$. The only finite subgroups of $SU(2)$ are isomorphic to one of the following:
\begin{itemize}
\item  Cyclic group of order $p$: $C_p=\left< a \; | \; a^p=1\right>$.
\item  Binary Dihedral group of order $4p$: $$Q_p=\left< a, b \; | a^p=b^2, a^{2p}=1, b^{-1}ab=a^{-1}\right>.$$
\item  Binary Tetrahedral group of order 24: $T=\left< a, b \;   | \; a^3=b^3=(a b)^2\right>$.
\item  Binary Octahedral group of order 48: $O=\left< a, b \;   | \; a^4=b^3=(a b)^2\right>$.
\item  Binary Icosahedral group of order 120: $I=\left< a, b \;   | \; a^5=b^3=(a b)^2\right>$.
\end{itemize}
The first main result of this paper computes the signature pair for finite subgroups of $SU(2)$.  While we are primarily interested in families of groups, for completeness we consider the signature pair for the three exceptional groups.  Using Mathematica \cite{Mathematica} we obtain the signature pair for the binary polyhedral groups.  Here is the complete story for the subgroups of $SU(2)$.

\begin{theorem}  \label{mt:subgroups}  Let $\Gamma$ be a finite subgroup of $SU(2)$.
\begin{enumerate}
\item[1.] If $\Gamma$ is isomorphic to a cyclic group of order $p$, then $$S(\Gamma)=\left( \left \lfloor \frac{p+2}{4} \right \rfloor+2, \left \lfloor \frac{p}{4} \right \rfloor \right).$$
\item[2.] If $\Gamma$ is isomorphic to a binary dihedral group of order $4p$, then $$S(\Gamma)=\left( \left \lfloor \frac{p}{2} \right \rfloor+p+2, \left \lfloor \frac{p-1}{2} \right \rfloor +1\right).$$
\item[3.] If $\Gamma$ is isomorphic to a binary tetrahedral group of order 24, then $$S(\Gamma)=\left( 9,5 \right).$$
\item[4.] If $\Gamma$ is isomorphic to a binary octahedral group of order 48, then $$S(\Gamma)=\left( 17,9 \right).$$
\item[5.] If $\Gamma$ is isomorphic to a binary icosahedral group of order 120, then $$S(\Gamma)=\left( 40,22 \right).$$
\end{enumerate}
\end{theorem}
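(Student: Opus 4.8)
The plan is to compute the Hermitian coefficient matrix of $\Phi_\Gamma$ explicitly and read off its inertia, exploiting the fact that a finite subgroup of $SU(2)$ is, up to unitary conjugacy, generated by a diagonal rotation $a=\operatorname{diag}(\zeta,\zeta^{-1})$ together with (in the non-abelian cases) the antidiagonal involution $b\colon(z_1,z_2)\mapsto(z_2,-z_1)$. Writing $x=|z_1|^2$, $y=|z_2|^2$, and $w=z_1\bar z_2$, every factor $1-\langle\gamma z,z\rangle$ is linear in $x,y,w,\bar w$, so $\Phi_\Gamma$ is a polynomial in these quantities (subject to $w\bar w=xy$). A monomial $x^ay^b$ sits on the diagonal of the coefficient matrix, whereas a factor $w^c$ (resp.\ $\bar w^c$) shifts the row index by $+c$ (resp.\ $-c$) relative to the column index. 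In particular, when $\Gamma$ is cyclic the polynomial depends only on $x,y$; since then $\Phi=\sum_{a,b}\lambda_{ab}\,|z_1^az_2^b|^2$, the matrix is diagonal and $S(\Gamma)$ is just the pair (number of positive $\lambda_{ab}$, number of negative $\lambda_{ab}$).

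For part (1) I would first evaluate the product by a root-of-unity computation: with $\zeta=e^{2\pi i/p}$ one gets $\prod_{k=0}^{p-1}(1-\zeta^kx-\zeta^{-k}y)=G_p-x^p-y^p$, where $G_p=G_p(xy)$ is the symmetric function $\rho_+^p+\rho_-^p$ of the roots of $\rho^2-\rho+xy$, satisfying $G_p=G_{p-1}-xy\,G_{p-2}$ and having the coefficient $(-1)^ja_{p,j}$ on $(xy)^j$, with $a_{p,j}=\tfrac{p}{p-j}\binom{p-j}{j}>0$. Hence
\[
\Phi_{C_p}=x^p+y^p+\sum_{j=1}^{\lfloor p/2\rfloor}(-1)^{j+1}a_{p,j}(xy)^j ,
\]
a diagonal form with entries $+1$ at $x^p,y^p$ and alternating signs on the $(xy)^j$ (positive for odd $j$, negative for even $j$). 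Counting the positive and negative entries and simplifying the floor expressions (using $\lceil\lfloor p/2\rfloor/2\rceil=\lfloor(p+2)/4\rfloor$) yields the stated signature pair.

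Part (2) is the heart of the argument. Splitting $\Gamma$ into the cosets $\langle a\rangle$ and $\langle a\rangle b$ factors the product as $P\cdot Q$, where $P=\prod(1-\zeta^kx-\zeta^{-k}y)=G_{2p}-x^{2p}-y^{2p}$ is diagonal and $Q=\prod(1-\zeta^k\bar w+\zeta^{-k}w)=H_{2p}-w^{2p}-\bar w^{2p}$ with $H_n(t)=G_n(-t)$. Setting $\Phi_{\mathrm{diag}}=1-G_{2p}H_{2p}+\sum_j a_{2p,j}\,(x^{2p+j}y^j+x^jy^{2p+j})$, one finds $\Phi=\Phi_{\mathrm{diag}}+P\,(w^{2p}+\bar w^{2p})$, so the coefficient matrix is diagonal plus a coupling at offset exactly $2p$. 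Because $\deg\Phi=4p$, the coupled monomials organize into $p+1$ blocks of size $2\times2$ (pairing $z_1^{2p+j}z_2^j$ with $z_1^jz_2^{2p+j}$) together with a single $3\times3$ block $\{z_1^{4p},z_1^{2p}z_2^{2p},z_2^{4p}\}$ at top degree. I would show that each $2\times2$ block has determinant exactly $0$—forced by $H_{2p}(t)=G_{2p}(-t)$, which makes the coupling entry and the diagonal entries equal in magnitude—so it contributes $(1,0)$; and that the $3\times3$ block always contributes $(1,1)$, since its characteristic polynomial is $-\lambda(\lambda^2-c\lambda-2)$ whose quadratic factor has roots of opposite sign. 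The remaining diagonal entries are the coefficients $c_k$ of $(xy)^k$ in $1-G_{2p}H_{2p}$.

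The main obstacle, and the decisive lemma, is the sign pattern of $G_{2p}(t)H_{2p}(t)=G_{2p}(t)G_{2p}(-t)$. This product is even in $t$, so $c_k=0$ for odd $k$; for even $k$ one must prove the signs alternate. I would deduce this from the root structure of $G_{2p}$: the substitution $t=\tfrac{1}{4\cos^2\theta}$ turns $G_{2p}=0$ into $\cos(2p\theta)=0$, exhibiting all $p$ roots $\tau_i$ as real numbers exceeding $\tfrac14$. Then $G_{2p}(i\sqrt{s})\,G_{2p}(-i\sqrt{s})$ equals a positive constant times $\prod_i(\tau_i^2+s)$, which has strictly positive coefficients—precisely the required alternation. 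Tallying the contributions $(p+1,0)$ from the $2\times2$ blocks, $(1,1)$ from the $3\times3$ block, and $(\lfloor p/2\rfloor,\lfloor(p-1)/2\rfloor)$ from the uncoupled diagonal entries gives the claimed signature pair for part (2). Finally, the three binary polyhedral groups are genuinely non-monomial, so for parts (3)–(5) I would assemble each coefficient matrix and diagonalize it directly in Mathematica.
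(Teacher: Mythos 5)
Your proposal is correct and follows essentially the same route as the paper: reduction of the cyclic case to D'Angelo's formula for $f_{p,p-1}$, the coset factorization $\Phi_{\Lambda_p}=1-PQ$ in the binary dihedral case with the resulting block structure (rank-one positive blocks from the coupled monomial pairs, one indefinite block at top degree, and an alternating uncoupled diagonal governed by $1-G_{2p}(t)G_{2p}(-t)$), and machine diagonalization for the three exceptional groups. Even your decisive sign lemma—locating the real roots $\tfrac14\sec^2\bigl(\tfrac{(2j+1)\pi}{4p}\bigr)$ of $G_{2p}$ via $t=\tfrac{1}{4\cos^2\theta}$ and factoring the product—is the same trigonometric identity the paper uses (there stated as $P(z)=\prod\bigl(z+\tan^2\tfrac{(2j+1)\pi}{4p}\bigr)$), merely packaged more directly.
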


We next consider cyclic and dihedral groups in $U(2)$.  For a cyclic subgroup of order $p$ in $U(2)$, several different signature pairs are possible.  For dihedral subgroups of $U(2)$, the signature pair depends only on the isomorphism type of the group.  For the cyclic group $C_p$ with $p$ elements, we consider the group representations $\pi : C_p \to \Gamma(p,q) <U(2)$ generated by \begin{equation}\label{e:gammagen}s\mapsto \begin{pmatrix} 
\omega & 0\\ 
0 & \omega^q
\end{pmatrix}\end{equation} where $\omega$ is a primitive $p$-th root of unity and $s$ is an element of order $p$ in $C_p$.  Up to conjugation, every finite cyclic subgroup of $U(2)$ is of the form $\Gamma(p,q)$ for some $p$ and $q$.  We compute the asymptotic positivity ratio of $\Gamma(p,q)$ for any $q$; we show that the asymptotic positivity ratio is a rational expression depending on $q$.  Further we take the limit as $q$ goes to infinity to obtain the following theorem.

\begin{theorem}  \label{mt:cyclic}  Let $\Gamma(p,q)$ be as in \eqref{e:gammagen}, then 
$$\lim_{p\to \infty}{L(\Gamma(p,q))} = \begin{cases}
\frac{3q+1}{4q} & \text{if $q$ is odd},\\
\frac{3q-2}{4(q-1)} & \text{if $q$ is even},
\end{cases}$$ and hence 
$$\lim_{q\to \infty}{\lim_{p\to \infty}{L(\Gamma(p,q))}}= \frac{3}{4}.$$ 
\end{theorem}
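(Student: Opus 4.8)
The plan is to reduce the computation of $L(\Gamma(p,q))$ to a lattice-point count governed by coefficient signs. Writing $x=|z_1|^2$ and $y=|z_2|^2$, the element $\gamma=\mathrm{diag}(\omega^j,\omega^{jq})$ gives $\langle\gamma z,z\rangle=\omega^j x+\omega^{jq}y$, so $\Phi_{\Gamma(p,q)}$ is a polynomial in $x$ and $y$ alone and its underlying Hermitian matrix is diagonal in the monomial basis $z_1^az_2^b$. Consequently $N^{+}(\Gamma(p,q))$ and $N^{-}(\Gamma(p,q))$ count the monomials $x^ay^b$ whose coefficient is positive, respectively negative, in
\[
\Phi_{\Gamma(p,q)} = 1 - P(x,y), \qquad P(x,y)=\prod_{j=0}^{p-1}\bigl(1-\omega^j x-\omega^{jq}y\bigr),\quad \omega=e^{2\pi i/p}.
\]

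First I would localize the support. The substitution $(x,y)\mapsto(\omega x,\omega^q y)$ merely permutes the factors of $P$, hence fixes $P$, forcing $[x^ay^b]P=0$ unless $a+qb\equiv 0\pmod p$. Since $\deg P=p$, the only candidate monomials lie on the lattice $\mathcal L=\{(a,b)\in\mathbb Z_{\ge0}^2 : a+qb\equiv 0 \pmod p\}$ inside the triangle $a+b\le p$. On $\mathcal L$ the quantity $a+qb$ is a nonnegative multiple $mp$ of $p$, and I would stratify the candidate monomials by this \emph{level} $m$. A direct count of integer points shows that level $m$ contains $\tfrac{(q-m)}{q(q-1)}\,p+O(1)$ lattice points for $1\le m\le q-1$ (and $O(1)$ points at level $q$), so the total number of candidates is $N\sim p/2$, and every level with $m<q$ carries order $p$ monomials.

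The signs come from the logarithmic generating function. Expanding $\log(1-u)$ and applying the roots-of-unity filter $\tfrac1p\sum_j\omega^{j(a+qb)}=\mathbf 1[a+qb\equiv0]$ gives
\[
\log P = -p\sum_{\substack{(a,b)\in\mathcal L\\(a,b)\ne(0,0)}}\frac{1}{a+b}\binom{a+b}{a}x^ay^b,
\]
a series all of whose coefficients are negative, and $P=\exp(\log P)$. Since levels add under multiplication, a level-$m$ monomial receives contributions in $\exp(\log P)$ only from multiset partitions of $(a,b)$ into at most $m$ nonzero elements of $\mathcal L$, the $r$-part contributions carrying sign $(-1)^r$. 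In particular every level-one monomial is primitive, so its coefficient in $P$ is negative and its coefficient in $\Phi$ is positive. For $m\ge 2$ the sign of $[x^ay^b]\Phi$ is the sign of an alternating sum over these partitions, and I would evaluate it by organizing the contributions of each level and extracting the position-dependent rule governing which coefficients turn negative.

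Determining this sign rule, and counting the negative coefficients it produces level by level, is the main obstacle: within a single level the sign is not constant (for instance the level-two coefficients already exhibit a parity dependence on $b$), and it is exactly this parity behavior, together with the arithmetic of the endpoints of each level's $b$-range, that forces the answer to depend on the parity of $q$. Once the rule is in hand, summing the order-$p$ contributions over all levels is expected to yield $N^{-}(\Gamma(p,q))\sim\frac{q-1}{8q}\,p$ for $q$ odd and $\sim\frac{q-2}{8(q-1)}\,p$ for $q$ even; combined with $N\sim p/2$ this gives $L(\Gamma(p,q))\to\frac{3q+1}{4q}$ and $\frac{3q-2}{4(q-1)}$ respectively. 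The final statement is then immediate, since both rational expressions tend to $\frac34$ as $q\to\infty$.
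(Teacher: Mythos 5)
Your reduction to counting coefficient signs of $f_{p,q}$, the localization of the support to the lattice $a+qb\equiv 0\pmod p$, the stratification by level (the paper's ``weight''), and the lattice-point counts $N_m\sim\frac{(q-m)p}{q(q-1)}$ and $N\sim p/2$ all match the paper's argument (Lemmas \ref{lemmaNWt}--\ref{lemma2}) and are correct. The problem is that the entire theorem hinges on the one step you defer: the rule determining the sign of $[x^ay^b]\Phi$ for monomials of level $m\ge 2$. You derive correctly from $\log P$ that level-one coefficients are positive, but for higher levels you only say the sign is ``the sign of an alternating sum over these partitions'' and that extracting the rule ``is the main obstacle.'' The asymptotics you then assert for $N^-(\Gamma(p,q))$ --- which are exactly what produce the parity-of-$q$ dichotomy in the answer --- are therefore unsupported. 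An alternating sum of positive quantities has no a priori sign, and nothing in your exponential-formula setup controls the cancellation; this is not a routine bookkeeping step but the substantive combinatorial input of the whole proof.

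The paper fills this gap by quoting a theorem of Loehr, Warrington, and Wilf (Theorem \ref{LWW}): the coefficient of the weight-$w$ monomial $x^ry^s$ in $f_{p,q}$ is positive when $\gcd(r,s,w)$ is odd and negative when it is even. From this one gets Corollary \ref{Cor1}: all odd-weight coefficients are positive, and within each even weight the signs alternate as $s$ runs through its (consecutive) admissible values, so asymptotically exactly half of the even-weight terms are positive. Combining that with your (correct) counts $N_{odd}/N\to\frac{q}{2(q-1)}$ or $\frac{q+1}{2q}$ and $N_{even}/N\to\frac{q-2}{2(q-1)}$ or $\frac{q-1}{2q}$ yields the stated formula. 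To complete your argument you would either have to cite this sign theorem or reprove it; your $\exp(\log P)$ partition expansion is a plausible starting point for such a reproof, but as written it does not yield the $\gcd$ criterion or any equivalent sign rule, so the proposal has a genuine gap at its central step.
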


The details of the proof appear in section 4, but we give a short description now.  First we recall from \cite{D4} the weight of a monomial appearing in $\Phi_{\Gamma(p,q)}$.  Then we find bounds for the total number of terms and the number of terms of each weight.  Using this information, we calculate bounds on the fraction of terms of odd weight and the fraction of terms of even weight.  Then we show that asymptotically the numbers of even and odd weight terms are equal.  We then interpret a result of Loehr, Warrington, and Wilf \cite{LWW} in terms of weights.  Their result implies that all the odd weight terms are positive, and the even weight terms alternate sign.  Since $\Gamma(p,q)$ is diagonal, the number of terms is the same as the number of eigenvalues.  It follows that the limit as $q$ goes to infinity of the asymptotic positivity ratio is $\frac{3}{4}$. 

The third main result calculates the signature pair for dihedral subgroups of $U(2)$.  
\begin{theorem}  \label{mt:dihedral} Let $\Delta_p$ be a dihedral subgroup of order $2p$ in $U(2)$, then
$$S(\Delta_p)=\left( \left \lfloor \frac{p}{2} \right \rfloor +\left \lfloor \frac{p}{4} \right \rfloor +2, \left \lfloor \frac{3(p+1)}{4} \right \rfloor \right),$$
and hence
$$\lim_{p\to \infty}{L(\Delta_p)}=\frac{1}{2}.$$
\end{theorem}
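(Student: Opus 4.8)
The plan is to exploit the splitting of $\Delta_p$ into its rotation and reflection parts and to reduce the computation of $S(\Delta_p)$ to a sign analysis of the coefficients of a single one-variable polynomial. Fix the faithful embedding generated by the rotation $\operatorname{diag}(\omega,\omega^{-1})$ and the reflection $\left(\begin{smallmatrix}0&1\\1&0\end{smallmatrix}\right)$, where $\omega=e^{2\pi i/p}$; since $\Phi_\Gamma$ transforms by a unitary change of coordinates under conjugation, its signature is a conjugacy invariant, and replacing the rotation by $\operatorname{diag}(\omega^h,\omega^{-h})$ with $\gcd(h,p)=1$ merely permutes the group elements, so the choice of faithful embedding is immaterial. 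Writing $x=|z_1|^2$, $y=|z_2|^2$, and $w=z_1\bar z_2$ (so that $w\bar w=xy$), the $p$ diagonal elements contribute $\langle\gamma z,z\rangle=\omega^kx+\omega^{-k}y$ and the $p$ anti-diagonal elements contribute $\langle\gamma z,z\rangle=\omega^kw+\omega^{-k}\bar w$. Hence the product in \eqref{e:phigamma} factors as a ``rotation part'' times a ``reflection part,'' which have identical algebraic form in $(x,y)$ and in $(w,\bar w)$ respectively.

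First I would establish the closed form $\prod_{k=0}^{p-1}(1-\omega^kx-\omega^{-k}y)=P_p(xy)-x^p-y^p$, where $P_p(t)=\alpha^p+\beta^p$ for $\alpha,\beta$ the roots of $\lambda^2-\lambda+t$. This follows by factoring each linear factor through the roots $t_\pm$ of $x\lambda^2-\lambda+y$ and using $\prod_k(\lambda-\omega^k)=\lambda^p-1$. Applying the same identity to the reflection part gives $P_p(xy)-w^p-\bar w^p$, so that $\Phi_{\Delta_p}=1-\bigl(P_p(xy)-x^p-y^p\bigr)\bigl(P_p(xy)-w^p-\bar w^p\bigr)$. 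Expanding, the monomials fall into two types: the purely diagonal ones (powers of $x$, $y$, and $xy=|z_1z_2|^2$, each of the form $|z_1^az_2^b|^2$) and the off-diagonal ones carrying a factor $w^p$ or $\bar w^p$.

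The core of the argument is to block-diagonalize the Hermitian coefficient matrix $C$. I expect it to split into: (i) one-dimensional blocks on the monomials $z_1^kz_2^k$ for $k=1,\dots,p-1$, whose diagonal entry is $-c_k$, where $c_k$ is the coefficient of $t^k$ in $P_p(t)^2$; (ii) rank-one $2\times2$ blocks $\left(\begin{smallmatrix}a_j&a_j\\a_j&a_j\end{smallmatrix}\right)$ on the pairs $\{z_1^{p+j}z_2^{\,j},\,z_1^{\,j}z_2^{p+j}\}$ for $j=0,\dots,\lfloor p/2\rfloor$ (here $a_j$ is the coefficient of $t^j$ in $P_p$), each contributing eigenvalues $2a_j$ and $0$; and (iii) a single $3\times3$ block coupling $z_1^pz_2^p$, $z_1^{2p}$, and $z_2^{2p}$, whose two nonzero eigenvalues have product $-2$ (computed from the sum of its $2\times2$ principal minors) and hence split as one positive and one negative. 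The delicate point---and the main obstacle---is verifying that these blocks are genuinely decoupled: that no monomial lies in two blocks and that the diagonal and cross coefficients are exactly as stated, which demands a careful term-by-term accounting of when the products $x^ay^b(xy)^c$, $(xy)^jw^p$, $x^pw^p$, etc., can coincide.

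It then remains to determine the signs of $a_j$ and $c_k$. From the recurrence $P_n=P_{n-1}-tP_{n-2}$ with $P_0=2$, $P_1=1$, an induction shows the coefficient of $t^j$ in $P_p$ has sign $(-1)^j$ and is nonzero for $0\le j\le\lfloor p/2\rfloor$; substituting $t\mapsto -s$ turns $P_p$ into a polynomial with positive coefficients, so its square has positive coefficients, and substituting back shows $\operatorname{sign}(c_k)=(-1)^k$ with $c_k\neq0$ for $0\le k\le2\lfloor p/2\rfloor$. Consequently the diagonal entry $-c_k$ is positive exactly when $k$ is odd, and $2a_j$ is positive exactly when $j$ is even. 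Tallying the counts---$\lfloor p/2\rfloor$ odd values of $k$ in $[1,p-1]$, $\lfloor p/4\rfloor+1$ even values of $j$ in $[0,\lfloor p/2\rfloor]$, and the single positive/negative pair from the $3\times3$ block---yields $N^+(\Delta_p)=\lfloor p/2\rfloor+\lfloor p/4\rfloor+2$ and $N^-(\Delta_p)=p-\lfloor p/4\rfloor=\lfloor 3(p+1)/4\rfloor$ after simplifying the floor identities. Finally, since $N^++N^-=p+\lfloor p/2\rfloor+2$, dividing gives $L(\Delta_p)=\frac{\lfloor p/2\rfloor+\lfloor p/4\rfloor+2}{p+\lfloor p/2\rfloor+2}\to\frac{3/4}{3/2}=\frac12$ as $p\to\infty$.
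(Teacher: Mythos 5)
Your proposal is correct and follows essentially the same route as the paper: split $\Phi_{\Delta_p}$ over the diagonal and anti-diagonal elements into the product form involving $f_{p,p-1}$ evaluated at $(|z_1|^2,|z_2|^2)$ and at $(z_2\bar z_1, z_1\bar z_2)$, block-diagonalize the coefficient matrix, and reduce the sign count to the alternating signs of the coefficients of $P_p$ and $P_p^2$ (your $-c_k$ is exactly the paper's $(-1)^{k+1}E_k$, your rank-one $2\times 2$ blocks are the paper's invariant polynomials $z_1^jz_2^j(z_1^p+z_2^p)$ viewed in the monomial basis, and your $3\times 3$ block collapses to the paper's $A_{p,3}$). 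The only differences are presentational: you work in the monomial basis rather than the $D_p$-invariant basis, and you re-derive the positivity of the $c_{p,j}$ from the recurrence $P_n=P_{n-1}-tP_{n-2}$ instead of citing D'Angelo's Theorem 3.1.
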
  

We conclude the introduction by outlining the rest of the paper.  In section 2 we give relevant definitions, introduce the weight of a polynomial, and prove some basic facts about Hermitian polynomials.  In section 3 we compute the signature pairs for finite subgroups of $SU(2)$.  In sections 4 and 5 we prove the main results for subgroups of $U(2)$.  In section 6 we recall some basic definitions from representation theory, and we show that in this context the polynomials $\Phi_{\Gamma(p,q)}$ are an alternating sum of orbit Chern classes. 

\section{Definitions and Preliminaries}
In this section we recall some basic facts about unitary representations and Hermitian polynomials.  We begin by defining Hermitian polynomials.  
\begin{definition}  Let $R:\mathbb{C}^n \times \mathbb{C}^n \to \mathbb{C}$ be a polynomial.  We call $R$ Hermitian if $$R(z,\bar{w})=\overline{R(w,\bar{z})}.$$
\end{definition}
Given any polynomial $$r(z, \bar{w})=\sum c_{\alpha \beta} z^{\alpha} \bar{w}^{\beta},$$ then $r$ is Hermitian if and only if the matrix $(c_{\alpha \beta})$ is Hermitian if and only if $r(z, \bar{z})$ is real-valued (see \cite{D0}).  We call $(c_{\alpha \beta})$ the {\it underlying matrix of $r$}.  We define $N(r)$, $N^{+}(r)$, and $N^{-}(r)$ to be the numbers of total eigenvalues, positive eigenvalues, and negative eigenvalues respectively of $(c_{\alpha \beta})$.  We define the {\it signature pair} $S(r)$ of a Hermitian polynomial to be the pair $S(r)=(N^{+}(r), N^{-}(r))$.  Define the {\it positivity ratio} by $L(r)=\frac{N^{+}(r)}{N(r)}$.  We recall the definition of the polynomial $\Phi_{\Gamma}$:
\begin{equation*}\Phi_{\Gamma}(z, \bar{z})=1-\prod_{\gamma \in \Gamma}{\left(1-\langle \gamma z, z \rangle \right)}.\end{equation*}
\begin{notation}    For any $\Gamma<U(n)$, put $N^{+}(\Gamma)=N^{+}(\Phi_\Gamma)$, $N^{-}(\Gamma)=N^{-}(\Phi_\Gamma)$, and $N(\Gamma)=N^{+}(\Gamma)+N^{-}(\Gamma)$.  Put $S(\Gamma)=(N^{+}(\Gamma), N^{-}(\Gamma))$, and $L(\Gamma)=\frac{N^{+}(\Gamma)}{N(\Gamma)}$.  
\end{notation}

For families of subgroups $\Gamma_p$ of $U(n)$, we define the {\it asymptotic positivity ratio} to be $\lim_{p\to \infty}{L(\Gamma_p)}$.

\begin{definition}  Let $C_p$ be a cyclic group of order $p$ with generator $s$.  Define a unitary representation $\pi : C_p \to U(2)$ by $$\pi(s) = 
\begin{pmatrix} 
\omega & 0\\ 
0 & \omega^{q}
\end{pmatrix}$$
where $\omega$ is a $p$-th primitive root of unity.  Let $\Gamma(p,q)=\pi(C_p)$.
\end{definition}

\begin{definition}  Two group representations $\pi_1 : G \to U(n)$ and $\pi_2 : G \to U(n)$ are called {\it equivalent} if there exists an element $A \in U(n)$ such that $$A \pi_1(g) A^{-1}= \pi_2(g)$$ for every $g \in G$.
\end{definition}
  
\begin{definition}A polynomial $f(x,y)$ has {\it weight} $j$ with respect to $\Gamma(p,q)$ if $$f(\lambda x, \lambda^q y) = \lambda^{jp} f(x,y)$$ for all $\lambda \in \mathbb{C}$.  In particular, the monomial $x^ay^b$ has weight $j$ if $a+qb=jp$.
\end{definition}

Because $\Phi_{\Gamma(p,q)}$ depends only on $|z_1|^2$ and $|z_2|^2$, we define the polynomial $f_{p,q}$ by 
\begin{equation}\label{e:fpq} f_{p,q}(\left| z_1 \right|^2, \left| z_2 \right|^2)=f_{p,q}(x,y)=1-\prod_{j=0}^{p-1}{\left( 1-\omega^{j}x-\omega^{qj}y\right)}=\Phi_{\Gamma(p,q)}(z,\bar{z}).
\end{equation}  

The polynomials $f_{p,q}$ have many interesting number-theoretic and combinatorial properties (see \cite{D4,D3,LWW,M,O}). In the case $q=1$, we obtain $f_{p,1}=(x+y)^p$.  In the case $q=2$, we get a variant of the Chebyshev polynomials.  The importance of the polynomials in these two cases motivates the study of $f_{p,q}$ for higher $q$.  For the reader's convenience, we list $f_{p,4}$ for $1\leq p \leq 9$ in Table 1.

\begin{table}[ht]
\caption{List of $f_{p,4}$ for $1\leq p\leq 9$.}
\hrule
{\begin{tabular}{cl}
$f_{1,4}(x,y)$   & = $x+y$ \\
$f_{2,4}(x,y)$   & = $x^2+2y-y^2$ \\
$f_{3,4}(x,y)$   & = $x^3+3x^2y+3xy^2+y^3$ \\
$f_{4,4}(x,y)$   & = $x^4+4y-6y^2+4y^3-y^4$ \\ 
$f_{5,4}(x,y)$   & = $x^5+5xy-5x^2y^2+y^5$ \\
$f_{6,4}(x,y)$   & = $x^6+6x^2y-3x^4y^2+2y^3+3x^2y^4-y^6$\\
$f_{7,4}(x,y)$   & = $x^7+7x^3y+14x^2y^3+7xy^5+y^7$ \\
$f_{8,4}(x,y)$   & = $x^8+8x^4y+4y^2+8x^4y^3-6y^4+4y^6-y^8$ \\
$f_{9,4}(x,y)$   & = $x^9+9x^5y+9xy^2+3x^6y^3-18x^2y^4+3x^3y^6+y^9$\\ 
\end{tabular}}
\hrule
\end{table}

We now summarize some properties of the $f_{p,q}$.  A beautiful result from \cite{D4} is that for all $q$, $f_{p,q}$ is congruent to $(x+y)^p$ mod $(p)$ if and only if $p$ is prime.  We naturally ask what other properties of $(x+y)^p$ generalize to $f_{p,q}$ for all $q$.  In \cite{D1} D'Angelo constructs the $f_{p,q}$ and shows that the coefficients are integers.  The $f_{p,2}$ polynomials have an extremal property studied in \cite{D3}.  Dilcher and Stolarsky consider a generalization of the $f_{p,2}$ polynomials in \cite{DS}.  Osler uses a variant of the $f_{p,2}$ to denest radicals \cite{O}.  Musiker uses the $f_{p,2}$ polynomials while studying the combinatorics of elliptic curves \cite{M}.  Loehr, Warrington, and Wilf give a combinatorial interpretation for the coefficients of $f_{p,q}$ using circulant determinants \cite{LWW}.  They also gave a simple method of determining the sign of each term in $f_{p,q}$, which we use to calculate the asymptotic positivity ratio for the $\Gamma(p,q)$.  They also study the asymptotics of the largest coefficient appearing in $f_{p,q}$.  In \cite{D5} D'Angelo uses methods from complex analysis to obtain asymptotic information; for example, he gives an asymptotic formula for the sum of the coefficients of $f_{p,q}$.  

The following basic property of Hermitian polynomials will be needed in the next section.  The signature pair for Hermitian polynomials is unchanged under a change of basis, and therefore equivalent representations have the same signature pair.

\begin{proposition}  \label{p:changebasis}  Given a Hermitian polynomial $r(z, \bar{z})$, then for every $U\in U(n)$ we have $S(r)=S(r \circ U)$.
\end{proposition}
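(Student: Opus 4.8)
The plan is to realize the underlying matrix of $r\circ U$ as a $*$-congruence of the underlying matrix of $r$ and then invoke Sylvester's law of inertia, which guarantees that the numbers of positive, negative, and zero eigenvalues are preserved under conjugation $C\mapsto A^*CA$ by an invertible matrix $A$. Here I read $r\circ U$ as the Hermitian polynomial $(r\circ U)(z,\bar z)=r(Uz,\overline{Uz})$, whose polarization is $r(Uz,\overline{Uw})$.

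First I would set $D=\deg r$ and let $Z(z)=(z^\alpha)_{|\alpha|\le D}$ be the column vector listing all monomials of degree at most $D$, of some length $M$. Since the functions $z^\alpha\bar z^\beta$ are linearly independent, every Hermitian polynomial spanned by these monomials equals $\langle C\,Z(z),Z(z)\rangle$ for a \emph{unique} Hermitian $M\times M$ matrix $C$, where $\langle a,b\rangle=\sum_i a_i\bar b_i$; this $C$ agrees with the underlying coefficient matrix $(c_{\alpha\beta})$ up to transpose, and in particular has the same eigenvalues, so $N^{\pm}(r)$ are the numbers of positive and negative eigenvalues of $C$. Next I would observe that the substitution $z\mapsto Uz$ linearizes on monomials: each entry $(Uz)^\alpha$ is homogeneous of degree $|\alpha|$, hence a linear combination of the $z^\beta$ with $|\beta|=|\alpha|$, so $Z(Uz)=V(U)Z(z)$ for an $M\times M$ matrix $V(U)$ that is block-diagonal with respect to degree (the $d$-th block is the symmetric power $\mathrm{Sym}^d U$). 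The crucial point is that $U\mapsto V(U)$ is a representation, so $V(U)\,V(U^{-1})=V(I)=I$ and $V(U)$ is invertible; note that only invertibility of $U$ is used, not unitarity.

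A direct computation on the polarized forms then gives
\[
(r\circ U)(z,\bar z)=\langle C\,Z(Uz),Z(Uz)\rangle=\langle C\,V(U)Z(z),\,V(U)Z(z)\rangle=\langle V(U)^{*}C\,V(U)\,Z(z),\,Z(z)\rangle,
\]
so by the uniqueness in the first step the underlying matrix of $r\circ U$ is $V(U)^{*}C\,V(U)$, a $*$-congruence of $C$ by the invertible matrix $V(U)$. Sylvester's law of inertia now yields $N^{+}(r\circ U)=N^{+}(r)$ and $N^{-}(r\circ U)=N^{-}(r)$, that is $S(r)=S(r\circ U)$. (Padding $C$ with zeros for monomials that fail to appear only introduces zero eigenvalues and does not affect $N^{\pm}$, and $r\circ U$ also has degree $\le D$, so both polynomials live in the same monomial space.)

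The only real obstacle is the bookkeeping in the first two steps: verifying that the Veronese-type map $Z$ converts the nonlinear substitution $r\mapsto r\circ U$ into an honest invertible linear congruence factor $V(U)$ acting on a single finite-dimensional space that contains both $r$ and $r\circ U$. Once $V(U)$ is identified as an invertible $*$-congruence factor, unitarity of $U$ plays no role and the conclusion is immediate from inertia. Applied to $r=\Phi_\Gamma$, the same computation shows $\Phi_\Gamma\circ U=\Phi_{U^{-1}\Gamma U}$, whence conjugate subgroups, and therefore equivalent representations, have equal signature pairs.
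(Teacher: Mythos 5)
Your proposal is correct and follows essentially the same route as the paper: both view $r$ as a Hermitian form on the space of polynomials of degree at most $\deg r$, observe that $z\mapsto Uz$ induces an invertible change of basis on that space (you make the matrix $V(U)$ and the resulting $*$-congruence $V(U)^{*}CV(U)$ explicit, where the paper simply notes that the $(Uz)^{\alpha}$ form a basis), and then conclude by Sylvester's law of inertia. Your added bookkeeping and the remark that only invertibility of $U$ is used are consistent with, and slightly more detailed than, the published argument.
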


\begin{proof}  Let $r(z, \overline{z})$ be a Hermitian polynomial of degree $d$; using multi-index notation, we write $$r(z, \overline{z})=\sum_{|\alpha|, |\beta|\leq d} c_{\alpha \beta} z^{\alpha} \overline{z}^{\beta}.$$  The polynomial $r(z, \overline{z})$ is a Hermitian form on the vector space of polynomials of degree at most $d$. Composing with $U$, we have $$r(Uz, \overline{Uz})=\sum c_{\alpha \beta} (Uz)^{\alpha} (\overline{Uz})^{\beta}.$$  Since $U$ is non-singular and the monomials $z^\alpha$ form a basis of the vector space of polynomials of degree less than $d$ in $z$, then $(Uz)^{\alpha}$ also form a basis of the vector space of polynomials of degree at most $d$.  Thus by Sylvester's Law (see page 223 of \cite{Horn}), $r(z, \overline{z})$ and $r(Uz, \overline{Uz})$ have the same numbers of eigenvalues of each sign.
\end{proof}

\begin{corollary}  \label{c:invsig}  If $\pi_1 : G \to U(n)$ and $\pi_2 : G \to U(n)$ are equivalent representations, then $S(\pi_1(G))= S(\pi_2(G))$.
\end{corollary}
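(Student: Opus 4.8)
The plan is to reduce the statement to Proposition~\ref{p:changebasis} by exhibiting a single unitary change of variables that carries $\Phi_{\pi_2(G)}$ to $\Phi_{\pi_1(G)}$. By the definition of equivalence, there is a unitary $A\in U(n)$ with $\pi_2(g)=A\pi_1(g)A^{-1}$ for all $g\in G$. Writing $\Gamma_1=\pi_1(G)$ and $\Gamma_2=\pi_2(G)$, this says that $\Gamma_2=A\Gamma_1A^{-1}$ as subgroups of $U(n)$; in particular every element of $\Gamma_2$ has the form $A\gamma A^{-1}$ for a unique $\gamma\in\Gamma_1$, and the map $\gamma\mapsto A\gamma A^{-1}$ is a bijection of $\Gamma_1$ onto $\Gamma_2$.

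First I would compute $\Phi_{\Gamma_2}$ after composing with $A$. Substituting $Az$ into the defining product and reindexing the product over $\Gamma_2$ by the above bijection gives
$$\Phi_{\Gamma_2}(Az,\overline{Az})=1-\prod_{\gamma\in\Gamma_1}\left(1-\langle A\gamma A^{-1}(Az),Az\rangle\right)=1-\prod_{\gamma\in\Gamma_1}\left(1-\langle A\gamma z,Az\rangle\right).$$
The crucial step is then to invoke the unitarity of $A$: since $A$ preserves the Hermitian inner product, $\langle A\gamma z,Az\rangle=\langle \gamma z,z\rangle$ for every $\gamma\in\Gamma_1$ and every $z$. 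Hence the right-hand side collapses to $1-\prod_{\gamma\in\Gamma_1}\left(1-\langle \gamma z,z\rangle\right)=\Phi_{\Gamma_1}(z,\bar z)$, so that $\Phi_{\Gamma_2}\circ A=\Phi_{\Gamma_1}$ as Hermitian polynomials.

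With this identity in hand the conclusion is immediate: Proposition~\ref{p:changebasis} asserts that composing a Hermitian polynomial with the unitary $A$ leaves its signature pair unchanged, so $S(\Phi_{\Gamma_2})=S(\Phi_{\Gamma_2}\circ A)=S(\Phi_{\Gamma_1})$, which is exactly $S(\pi_2(G))=S(\pi_1(G))$.

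I do not expect any serious obstacle; the argument is short once one notices that conjugating the group by $A$ corresponds precisely to precomposing $\Phi_\Gamma$ with $A$. The only points requiring care are the bookkeeping that the product over $\Gamma_2$ is a product over $\Gamma_1$ reindexed by the bijection $\gamma\mapsto A\gamma A^{-1}$, and the observation that unitarity is exactly what cancels the conjugating factors inside the inner product. This last cancellation is the step on which the whole reduction hinges, and it is the one place where the hypothesis $A\in U(n)$ (rather than merely $A$ invertible) is genuinely used.
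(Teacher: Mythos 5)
Your argument is correct and is essentially the paper's own proof: both reduce to Proposition~\ref{p:changebasis} by showing that the substitution $z\mapsto Az$ carries one invariant polynomial to the other, with the unitarity of $A$ (i.e.\ $\langle A\gamma z, Az\rangle=\langle \gamma z, z\rangle$) doing the cancellation inside the inner product. The only cosmetic difference is the direction of the computation and your slightly more explicit bookkeeping of the bijection $\gamma\mapsto A\gamma A^{-1}$ reindexing the product.
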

\begin{proof}
The result follows by a change of coordinates.  Let $g\in G$, then there exists $A\in U(n)$ such that $$A \pi_1(g) A^{-1}= \pi_2(g).$$  Thus
\begin{eqnarray*}
\Phi_{\pi_1(G)}(z, \bar{z}) & = & 1 - \prod_{\gamma \in \pi_1(G)}{\left(1-\langle \gamma z, z \rangle \right)}\\
& = & 1 - \prod_{g \in G}{\left(1-\langle \pi_1(g) z, z \rangle \right)}\\
& = & 1 - \prod_{g \in G}{\left(1-\langle A^{-1} \pi_2(g) A z, z \rangle \right)}\\
& = & 1 - \prod_{g \in G}{\left(1-\langle \pi_2(g) A z, A z \rangle \right)}\\
& = & 1 - \prod_{g \in G}{\left(1-\langle \pi_2(g) w, w \rangle \right)}\\
& = & \Phi_{\pi_2(G)}(w, \bar{w})\\
\end{eqnarray*}
where $w=Az$. By Proposition \ref{p:changebasis}, the signature pair of a Hermitian polynomial is invariant under a change of coordinates, and the result follows.
\end{proof}

\section{Subgroups of $SU(2)$}
Given a unitary representation $\pi : G \to U(n)$, a natural question is: for fixed $n$, what are the possible finite groups $G$?  For $n=1$, the only finite groups are cyclic.  For $n>1$, the question becomes difficult.  In this paper we restrict to the case $n=2$.  When $n=2$, Du Val \cite{Du1} classified the finite groups while studying what are now called Du Val singularities.  He found nine families of groups.  We further restrict to $SU(2)$, and in this case, five types of subgroups arise:
\begin{itemize}
\item  Cyclic group of order $p$: $C_p:=\left< a \; | \; a^p=1\right>$.
\item  Binary Dihedral group of order $4p$: $Q_p:=\left< a, b \; | \; a^p=b^2, a^{2p}=1, b^{-1}ab=a^{-1}\right>$.
\item  Binary Tetrahedral group of order 24: $T:=\left< a, b \;   | \; a^3=b^3=(a b)^2\right>$.
\item  Binary Octahedral group of order 48: $O:=\left< a, b \;   | \; a^4=b^3=(a b)^2\right>$.
\item  Binary Icosahedral group of order 120: $I:=\left< a, b \;   | \; a^5=b^3=(a b)^2\right>$.
\end{itemize}

The purpose of this section is to give a complete analysis of the signature pair for each of these groups.

\subsection{Cyclic Groups}

Let $\Gamma$ be a cyclic subgroup of order $p$ in $SU(2)$.  Let $A$ be a generator of $\Gamma$ in $SU(2)$.  By the results of section 2, we can diagonalize $A$ without affecting the signature pair.  Thus it suffices to consider $A$ of the form $$\begin{pmatrix}
\omega^{a} & 0\\ 
0 & \omega^{b}
\end{pmatrix}$$ where $\omega$ is a $p$-th root of unity.  Since $A$ is in $SU(2)$, we also know that $a+b=p$.  Moreover, for $A$ to have order $p$, then $a$, $b$, and $p$ must be relatively prime.  Then $a$, $p-a$, and $p$ are relatively prime and hence $\omega^{a j}=\omega$ for some $j$.  Thus we can always choose $a$ to be 1 without loss of generality.  Hence $b=p-1$, and then $A$ generates $\Gamma(p,p-1)$.  Therefore up to conjugation, $\Gamma(p,p-1)$ is the only cyclic subgroup of order $p$ in $SU(2)$.    Thus the only possible signature pair for a cyclic subgroup of order $p$ in $SU(2)$ is given by $\Gamma(p,p-1)$.  We recall some useful facts about $\Phi_{\Gamma(p,p-1)}$ from \cite{D2}.  We will use these properties for computing the asymptotic positivity ratio for other groups.  

\begin{theorem} {({D'Angelo,} \cite{D2})}  \label{thm:cyclicproposition}  The following hold for $\Phi_{\Gamma(p,p-1)}$.
\begin{enumerate}
\item We have the following exact formula:
\begin{align*}\Phi_{\Gamma(p,p-1)}=1+|z_1|^{2p} +|z_2|^{2p}&-\left( \frac{1+\sqrt{1-4 |z_1|^2 |z_2|^2}}{2} \right)^p\\ &- \left( \frac{1-\sqrt{1-4|z_1|^2 |z_2|^2}}{2}\right)^p.\end{align*}
\item  The coefficients $c_{p,j}$ in the following formula are positive integers: $$\Phi_{\Gamma(p,p-1)}(z,\bar{z})= |z_1|^{2p}+|z_2|^{2p}+\sum_{j=1}^{\left \lfloor \frac{p}{2} \right \rfloor}{(-1)^{j-1}c_{p,j} |z_1|^{2j}|z_2|^{2j}}.$$
\item  These coefficients are given by $$c_{p,j}= \frac{p}{p-j}\binom{p-j}{j}.$$
\item Finally the signature pair is $$S(\Gamma(p,p-1))=\left(\left \lfloor \frac{p+2}{4} \right \rfloor + 2,\left \lfloor \frac{p}{4} \right \rfloor \right).$$
\end{enumerate}
\end{theorem}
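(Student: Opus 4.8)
The plan is to treat the four parts in sequence, since each feeds the next. For part~(1) I would substitute $x=|z_1|^2$, $y=|z_2|^2$ and record that for $\Gamma(p,p-1)$ the group elements are $\mathrm{diag}(\omega^j,\omega^{-j})$, so that
$$f_{p,p-1}(x,y)=1-\prod_{j=0}^{p-1}\bigl(1-\omega^j x-\omega^{-j}y\bigr).$$
The central computation is to evaluate this product in closed form. Writing $u=\omega^j$ and clearing the $u^{-1}$, each factor becomes $-u^{-1}\bigl(xu^2-u+y\bigr)=-u^{-1}x(u-r_+)(u-r_-)$, where $r_\pm=\tfrac{1\pm\sqrt{1-4xy}}{2x}$ are the roots of $xt^2-t+y$. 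Taking the product over $j$, I would use $\prod_{j=0}^{p-1}(t-\omega^j)=t^p-1$ to get $\prod_j(\omega^j-r_\pm)=(-1)^p(r_\pm^p-1)$, together with $\prod_j\omega^{-j}=\omega^{-p(p-1)/2}=(-1)^{p-1}$ and $\prod_j(-x)=(-1)^px^p$. Since $r_+r_-=y/x$ and $xr_\pm=\tfrac{1\pm\sqrt{1-4xy}}{2}$, the product collapses to $-x^p-y^p+\bigl(\tfrac{1+\sqrt{1-4xy}}{2}\bigr)^p+\bigl(\tfrac{1-\sqrt{1-4xy}}{2}\bigr)^p$, and subtracting from $1$ yields the stated formula.

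For parts~(2) and~(3) I would set $A=\tfrac{1+\sqrt{1-4xy}}{2}$ and $B=\tfrac{1-\sqrt{1-4xy}}{2}$, so that $A+B=1$ and $AB=xy$; thus $A$ and $B$ are the roots of $t^2-t+xy$, and the power sum $A^p+B^p$ satisfies the Lucas recurrence $P_p=P_{p-1}-(xy)P_{p-2}$ with $P_0=2$, $P_1=1$. The Girard--Waring formula for power sums of the roots of $t^2-t+w$ gives
$$A^p+B^p=\sum_{j=0}^{\lfloor p/2\rfloor}(-1)^j\frac{p}{p-j}\binom{p-j}{j}(xy)^j,$$
whose $j=0$ term is $1$. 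Substituting into $\Phi=x^p+y^p+1-(A^p+B^p)$ produces exactly the claimed sum with $c_{p,j}=\tfrac{p}{p-j}\binom{p-j}{j}$; positivity is clear from the formula since $p-j\ge j>0$. Integrality I would obtain from symmetric-function theory: $A^p+B^p$ is a polynomial with integer coefficients in the elementary symmetric functions $A+B$ and $AB$, so specializing $A+B=1$ leaves integer coefficients in $xy$.

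Part~(4) is then short. Every monomial in $\Phi$ has the form $z^\alpha\bar z^\alpha$ --- namely $z_1^p\bar z_1^p$, $z_2^p\bar z_2^p$, and $z_1^jz_2^j\bar z_1^j\bar z_2^j$ --- and the multi-indices $(p,0)$, $(0,p)$, $(j,j)$ are pairwise distinct. Hence the underlying matrix of coefficients is diagonal, and the sign of each eigenvalue equals the sign of the corresponding coefficient. Since $c_{p,j}>0$, the term indexed by $j$ is positive exactly when $j$ is odd. Counting gives $N^+=2+\#\{\text{odd }j:1\le j\le\lfloor p/2\rfloor\}$ and $N^-=\#\{\text{even }j:1\le j\le\lfloor p/2\rfloor\}$, and a routine check over the four residues of $p$ modulo $4$ reduces these to $\lfloor(p+2)/4\rfloor+2$ and $\lfloor p/4\rfloor$.

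I expect the main obstacle to be the sign bookkeeping in the product evaluation of part~(1): keeping track of the factor $(-1)^p$ from extracting $x$, the factor $(-1)^{p-1}$ coming from $\prod_j\omega^{-j}=\omega^{-p(p-1)/2}$ (which requires separating the parity of $p$), and the two factors of $(-1)^p$ from the reflected products $\prod_j(\omega^j-r_\pm)$. A secondary delicate point is the integrality claim for $c_{p,j}$, which is not visible from the formula $\tfrac{p}{p-j}\binom{p-j}{j}$ alone and is cleanest to justify through the symmetric-function argument rather than by manipulating binomial coefficients directly.
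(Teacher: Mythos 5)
The paper does not prove this theorem: it is quoted verbatim from D'Angelo \cite{D2} and used as a black box, so there is no in-paper argument to compare against. Your proof is correct and self-contained, which is arguably more than the paper offers. The sign bookkeeping in part (1) that you flag as the main hazard does work out: the product of the factors $-\omega^{-j}x(\omega^j-r_+)(\omega^j-r_-)$ contributes $(-1)^p$ from the minus signs, $(-1)^{p-1}$ from $\prod_j\omega^{-j}$ (using $\omega^{p/2}=-1$ when $p$ is even), and $(-1)^{2p}$ from the two reflected products, for a net factor of $-1$; combined with $r_+r_-=y/x$ and $xr_\pm=\tfrac{1\pm\sqrt{1-4xy}}{2}$ this gives exactly the closed form (the degenerate case $x=0$ is handled since the identity is polynomial). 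The Girard--Waring expansion of $A^p+B^p$ for the roots of $t^2-t+xy$ delivers parts (2) and (3) in one stroke, and your integrality argument via the recurrence $P_p=P_{p-1}-xy\,P_{p-2}$ (equivalently, power sums as integer polynomials in the elementary symmetric functions) is the right one, since integrality is indeed not visible from $\tfrac{p}{p-j}\binom{p-j}{j}$ directly. For part (4), the observation that every monomial is of the form $z^\alpha\bar z^\alpha$ with distinct $\alpha$, so the coefficient matrix is diagonal and eigenvalue signs are coefficient signs, is exactly the mechanism the paper relies on elsewhere (e.g., in Section 4, where it notes that for diagonal groups the number of terms equals the number of eigenvalues); the residue-mod-4 count matching $\lfloor(p+2)/4\rfloor+2$ and $\lfloor p/4\rfloor$ checks out.
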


\begin{remark}  D'Angelo also showed that the coefficients of $f_{p, p-1}$ are the same as the coefficients of $f_{p,2}$ up to sign.  More generally the coefficients of $f_{p, q}$ are the same as the coefficients of $f_{p,p-q+1}$ up to sign.
\end{remark}

\begin{corollary} The asymptotic positivity for cyclic subgroups of $SU(2)$ is $$\lim_{p\to \infty}{L(\Gamma(p,p-1))}=\frac{1}{2}.$$
\end{corollary}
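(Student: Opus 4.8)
The plan is to read the signature pair directly off Theorem~\ref{thm:cyclicproposition}, part~(4), and then evaluate the limit by an elementary floor-function estimate. First I would substitute the values $N^{+}(\Gamma(p,p-1))=\lfloor (p+2)/4\rfloor+2$ and $N^{-}(\Gamma(p,p-1))=\lfloor p/4\rfloor$ into the definition $L(\Gamma)=N^{+}(\Gamma)/(N^{+}(\Gamma)+N^{-}(\Gamma))$, obtaining
\[
L(\Gamma(p,p-1))=\frac{\lfloor (p+2)/4\rfloor+2}{\lfloor(p+2)/4\rfloor+2+\lfloor p/4\rfloor}.
\]

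The key observation is then that each floor term differs from $p/4$ by a bounded amount: as $p\to\infty$ we have $\lfloor(p+2)/4\rfloor=\tfrac{p}{4}+O(1)$ and $\lfloor p/4\rfloor=\tfrac{p}{4}+O(1)$. Dividing numerator and denominator by $p$ and passing to the limit, the bounded additive errors, together with the constant offset $+2$, all vanish, leaving
\[
\lim_{p\to\infty}L(\Gamma(p,p-1))=\frac{1/4}{1/4+1/4}=\frac{1}{2}.
\]
If one prefers to avoid the asymptotic notation entirely, the same conclusion can be reached by splitting into the four residue classes of $p$ modulo $4$, writing each floor explicitly, and taking the limit in each case; all four give $\tfrac12$.

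I do not expect any substantial obstacle here, since the corollary is an immediate consequence of the explicit signature pair already established. The only point that warrants (minor) care is verifying that the floor operations and the additive constant contribute nothing in the limit; this is guaranteed by the fact that both the positive and negative eigenvalue counts grow linearly in $p$ with the same leading coefficient $\tfrac14$, so their ratio tends to $1$ and the positivity ratio tends to $\tfrac12$.
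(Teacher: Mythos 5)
Your proposal is correct and is exactly the (implicit) argument the paper intends: the corollary is stated without proof as an immediate consequence of part (4) of Theorem~\ref{thm:cyclicproposition}, and your computation — both eigenvalue counts grow like $p/4$, so the ratio tends to $\tfrac{1}{2}$ — is the straightforward verification. No gaps.
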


\subsection{Binary Dihedral Groups}

Consider the binary dihedral groups $$Q_p:=\left< a, b \; | a^p=b^2, a^{2p}=1, b^{-1}ab=a^{-1}\right>.$$  The group $Q_p$ has order $4p$.  Let $\eta : Q_p \to SU(2)$ be the faithful representation generated by 
\begin{eqnarray*}
\eta(a) & = & 
\begin{pmatrix} 
\omega & 0\\ 
0 & \omega^{-1}
\end{pmatrix}\\
\eta(b) & = & 
\begin{pmatrix} 
0 & 1\\ 
-1 & 0
\end{pmatrix}\\
\end{eqnarray*}
where $\omega$ is a $2p$-th primitive root of unity.  Here set $\Lambda_p=\eta(Q_p)$.

Before stating the main results of this section we illustrate the techniques with an example.  We compute the number of positive and negative eigenvalues of $\Phi_{\Lambda_2}(z, \bar{z})$.  Expanding the product in the definition we get 
\begin{eqnarray*}
\lefteqn{\Phi_{\Lambda_2} =}\\
& & z_1^4 \bar{z_1}^4+z_2^4 \bar{z_1}^4-z_1^4 z_2^4 \bar{z_1}^8+4
z_1^5 z_2 \bar{z_1}^5 \bar{z_2}-4 z_1 z_2^5 \bar{z_1}^5 \bar{z_2}+12
z_1^2 z_2^2 \bar{z_1}^2 \bar{z_2}^2\\
& & +2 z_1^6 z_2^2 \bar{z_1}^6 \bar{z_2}^2+2
z_1^2 z_2^6 \bar{z_1}^6 \bar{z_2}^2+z_1^4 \bar{z_2}^4+z_2^4 \bar{z_2}^4-z_1^8
\bar{z_1}^4 \bar{z_2}^4-4 z_1^4 z_2^4 \bar{z_1}^4 \bar{z_2}^4\\
& &-z_2^8
\bar{z_1}^4 \bar{z_2}^4-4 z_1^5 z_2 \bar{z_1} \bar{z_2}^5+4 z_1
z_2^5 \bar{z_1} \bar{z_2}^5+2 z_1^6 z_2^2 \bar{z_1}^2 \bar{z_2}^6+2
z_1^2 z_2^6 \bar{z_1}^2 \bar{z_2}^6-z_1^4 z_2^4 \bar{z_2}^8.\\
\end{eqnarray*}

Notice that in contrast to the cyclic case, we get off-diagonal terms, so it is not enough to simply count the number of terms to get the number of eigenvalues.  Rewriting in terms of polynomials invariant under the $Q_2$-action, we get
\begin{eqnarray*}
\lefteqn{\Phi_{\Lambda_2} =}\\
& & (z_1^4+z_2^4)(\bar{z_1}^4 +\bar{z_2}^4)-4 z_1^4 z_2^4 \bar{z_1}^4 \bar{z_2}^4 +12z_1^2 z_2^2 \bar{z_1}^2 \bar{z_2}^2 +4 z_1 z_2 (z_1^4 -z_2^4) \bar{z_1} \bar{z_2} (\bar{z_1}^4 -\bar{z_2}^4)\\
& & +2 z_1^2 z_2^2 (z_1^4 +z_2^4) \bar{z_1}^2 \bar{z_2}^2 (\bar{z_1}^4 +\bar{z_2}^4)-z_1^4 z_2^4 (\bar{z_1}^8+\bar{z_2}^8)-\bar{z_1}^4 \bar{z_2}^4 (z_1^8+z_2^8).\\
\end{eqnarray*}

Equivalently we get $$\Phi_{\Lambda_2} =
\begin{pmatrix}
\bar{z_1}^4 + \bar{z_2}^4\\
\bar{z_1} \bar{z_2} (\bar{z_1}^4 -\bar{z_2}^4)\\
\bar{z_1}^2 \bar{z_2}^2 (\bar{z_1}^4 +\bar{z_2}^4)\\
\bar{z_1}^2 \bar{z_2}^2\\
\bar{z_1}^4 \bar{z_2}^4\\
\bar{z_1}^8+\bar{z_2}^8\\
\end{pmatrix}^{T}
\begin{pmatrix}
1 &  0 & 0 &  0 & 0 & 0\\
0 & 4 & 0 &  0 & 0 & 0\\
0 &  0 & 2 &  0 & 0 & 0\\
0 &  0 & 0 & 12 & 0 & 0\\
0 &  0 & 0 &  0 & -4 & -1\\
0 &  0 & 0 &  0 & -1 & 0\\
\end{pmatrix}
\begin{pmatrix}
z_1^4 + z_2^4\\
z_1 z_2 (z_1^4 -z_2^4)\\
z_1^2 z_2^2 (z_1^4 +z_2^4)\\
z_1^2 z_2^2\\
z_1^4 z_2^4\\
z_1^8+z_2^8\\
\end{pmatrix}.$$

Hence the eigenvalues of $\Phi_{\Lambda_2}$ are 1, 4, 2, 12, $-2+\sqrt{5}$, $-2-\sqrt{5}$.  Therefore $S(\Lambda_2)=(5,1)$.  

For clarity, we explicitly write $\Phi_{\Lambda_2}$ as a difference of squared norms.  Let
$$A(z)=\begin{pmatrix} 
z_1^4+z_2^4\\ 
2 z_1 z_2 (z_1^4-z_2^4)\\ 
\sqrt{2} z_1^2 z_2^2 (z_1^4 +z_2^4)\\ 
\sqrt{12} z_1^2 z_2^2\\ 
(\sqrt{-2 + \sqrt{5}}) (z_1^8 + (2 - \sqrt{5}) z_1^4 z_2^4 + z_2^8\\ 
\end{pmatrix},$$ and $$B(z)=\left( (\sqrt{2 + \sqrt{5}}) (z_1^8 + (2 + \sqrt{5}) z_1^4 z_2^4 + z_2^8)\right).$$  Then we have $\Phi_{\Lambda_2}=||A(z)||^2-||B(z)||^2$.

We proceed along these lines for general $p$.  First we prove a theorem relating $\Phi_{\Lambda_p}$ to the cyclic case.  Since the elements of $\Lambda_p$ split equally into diagonal and anti-diagonal matrices, we can prove a theorem analogous to Theorem \ref{Dihedral} for this case.

\begin{proposition}  \label{Quaternion} The invariant polynomial corresponding to the representation $\eta$ satisfies:
\begin{eqnarray*}\Phi_{\Lambda_p} =  f_{2p,2p-1}(\left|z_1 \right|^2, \left|z_2 \right|^2)+
f_{2p,2p-1}(z_2 \bar{z_1}, -z_1 \bar{z_2})\\
\qquad-f_{2p,2p-1}(\left|z_1 \right|^2,\left|z_2 \right|^2)f_{2p,2p-1}(z_2 \bar{z_1}, -z_1 \bar{z_2}).
\end{eqnarray*}
\end{proposition}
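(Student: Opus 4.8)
The plan is to exploit the multiplicative structure of $\Phi_{\Lambda_p}$: I would split the defining product $\prod_{\gamma \in \Lambda_p}(1 - \langle \gamma z, z\rangle)$ according to whether $\gamma$ is diagonal or anti-diagonal, and then recognize each of the two resulting factors as $1 - f_{2p,2p-1}$ evaluated at suitable arguments.

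First I would list the elements of $\Lambda_p$ explicitly. Since $\omega$ is a primitive $2p$-th root of unity, $\eta(a) = \mathrm{diag}(\omega, \omega^{-1})$ has order $2p$, so the $2p$ diagonal elements are $\eta(a)^k = \mathrm{diag}(\omega^k, \omega^{-k})$ for $0 \le k \le 2p-1$, and the $2p$ anti-diagonal elements are $\eta(a)^k\eta(b) = \left(\begin{smallmatrix} 0 & \omega^k \\ -\omega^{-k} & 0 \end{smallmatrix}\right)$. These $4p$ matrices are pairwise distinct, so by faithfulness of $\eta$ and $|Q_p| = 4p$ this is the complete list. A direct computation then gives $\langle \eta(a)^k z, z\rangle = \omega^k |z_1|^2 + \omega^{-k}|z_2|^2$ for the diagonal elements and $\langle \eta(a)^k\eta(b) z, z\rangle = \omega^k z_2\bar{z_1} - \omega^{-k} z_1\bar{z_2}$ for the anti-diagonal ones.

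Next, taking the $\omega$ in the definition of $f_{2p,2p-1}$ to be this same $2p$-th root (so that $\omega^{(2p-1)j} = \omega^{-j}$), the defining identity reads $\prod_{j=0}^{2p-1}(1 - \omega^j x - \omega^{-j} y) = 1 - f_{2p,2p-1}(x,y)$. Applying this with $x = |z_1|^2$, $y = |z_2|^2$ identifies the diagonal factor as $1 - f_{2p,2p-1}(|z_1|^2, |z_2|^2)$. For the anti-diagonal factor the key substitution is $x = z_2\bar{z_1}$ and $y = -z_1\bar{z_2}$, for which $1 - \omega^k x - \omega^{-k}y = 1 - \omega^k z_2\bar{z_1} + \omega^{-k}z_1\bar{z_2}$ matches exactly; this identifies the anti-diagonal factor as $1 - f_{2p,2p-1}(z_2\bar{z_1}, -z_1\bar{z_2})$. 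Abbreviating $F = f_{2p,2p-1}(|z_1|^2, |z_2|^2)$ and $G = f_{2p,2p-1}(z_2\bar{z_1}, -z_1\bar{z_2})$, I would conclude $\Phi_{\Lambda_p} = 1 - (1-F)(1-G) = F + G - FG$, which is the asserted formula.

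The computation is elementary, so there is no serious obstacle; the only point requiring care is the anti-diagonal substitution, where the second argument is $-z_1\bar{z_2}$ rather than $z_1\bar{z_2}$. This sign, which tracks the $-1$ entry of $\eta(b)$, is precisely what makes the anti-diagonal product collapse into a single evaluation of $f_{2p,2p-1}$, and so it must be handled correctly to land on the stated identity.
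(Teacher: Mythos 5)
Your proposal is correct and follows essentially the same route as the paper: both split the product over $\Lambda_p$ into the $2p$ diagonal and $2p$ anti-diagonal elements, identify each factor as $1-f_{2p,2p-1}$ evaluated at $(|z_1|^2,|z_2|^2)$ and $(z_2\bar{z_1},-z_1\bar{z_2})$ respectively via the defining product formula, and expand $1-(1-F)(1-G)$. The sign bookkeeping for the anti-diagonal substitution is handled exactly as in the paper.
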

\begin{proof}  As we alluded to above, the key idea is to notice that $$\Lambda_p=\left\{\begin{pmatrix} 
\omega^j & 0\\ 
0 & \omega^{-j}
\end{pmatrix},\begin{pmatrix} 
0 & \omega^{j}\\ 
-\omega^{-j} & 0
\end{pmatrix}|\; j=0, \cdots , 2p-1 \right\}.$$  Then the result follows from the calculation below
\begin{eqnarray*}
\lefteqn{\Phi_{\Lambda_p}(z,\bar{z}) = 1- \prod_{\gamma \in \Lambda_p}{\left( 1-\left< \gamma z, z \right> \right)}}\\
&= & 1- \left( \prod_{j=0}^{2p-1}{\left( 1-\left< \begin{pmatrix} 
\omega^j & 0\\ 
0 & \omega^{-j}
\end{pmatrix} z, z \right> \right)}\right)\left( \prod_{j=0}^{2p-1}{\left( 1-\left< \begin{pmatrix} 
0 & \omega^{j}\\ 
-\omega^{-j} & 0
\end{pmatrix}z, z \right> \right)}\right)\\
&= & 1- \left( \prod_{j=0}^{2p-1}{\left( 1 - \omega^j z_1 \bar{z_1} - \omega^{-j} z_2 \bar{z_2} \right)}\right)\left( \prod_{j=0}^{2p-1}{\left( 1 - \omega^j z_2 \bar{z_1} + \omega^{-j} z_1 \bar{z_2} \right)}\right)\\
& = & 1- \left( 1- f_{2p,2p-1}(|z_1|^2, |z_2|^2)\right)\left( 1-f_{2p,2p-1}(z_2 \bar{z_1}, -z_1\bar{z_2})\right)\\
&= & f_{2p,2p-1}(\left|z_1 \right|^2, \left|z_2 \right|^2)+
f_{2p,2p-1}(z_2 \bar{z_1}, -z_1 \bar{z_2})\\
& & \qquad-f_{2p,2p-1}(\left|z_1 \right|^2,\left|z_2 \right|^2)f_{2p,2p-1}(z_2 \bar{z_1}, -z_1 \bar{z_2}).
\end{eqnarray*}
\end{proof}

Now we proceed by using the previous theorem to express the polynomial $\Phi_{\Lambda_p}$ in terms of the $f_{2p,2p-1}$ polynomials.  The goal then is to express $\Phi_{\Lambda_p}$ in terms of the following linearly independent $Q_p$-invariant polynomials: $$z_1^{2p} + z_2^{2p},\; z_1^j z_2^j (z_1^{2p}+ (-1)^j z_2^{2p}),\; (z_1 z_2)^{2j}, z_1^{4p}+z_2^{4p}$$ for $j=1, \cdots, \, p$.

For the reader's convenience, we again recall $$f_{2p,2p-1}(x,y)= x^{2p}+y^{2p}+\sum_{j=1}^{p}{(-1)^{j-1}c_{2p,j}(xy)^j}.$$  Then by Proposition \ref{Quaternion} 
\begin{eqnarray*}
\lefteqn{\Phi_{\Lambda_p}(z, \bar{z}) = } \\
& & (z_1 \bar{z_1})^{2p} + (z_2 \bar{z_2})^{2p} + \sum_{j=1}^{p}{(-1)^{j-1}c_{2p,j}(z_1 z_2 \bar{z_1} \bar{z_2})^j}+(z_2 \bar{z_1})^{2p}+(z_1 \bar{z_2})^{2p}\\
& &+ \sum_{j=1}^{p}{(-1) c_{2p,j} (z_1 z_2 \bar{z_1} \bar{z_2})^j}-z_1^{2p}z_2^{2p}\bar{z_1}^{4p}-z_1^{4p}\bar{z_1}^{2p}\bar{z_2}^{2p}-z_2^{4p}\bar{z_1}^{2p}\bar{z_2}^{2p}\\
& &-z_1^{2p}z_2^{2p}\bar{z_2}^{4p}+\sum_{j=1}^{p}{c_{2p,j}z_1^{2p+j}z_2^j\bar{z_1}^{2p+j}\bar{z_2}^j}+\sum_{j=1}^{p}{c_{2p,j}z_1^{j}z_2^{2p+j}\bar{z_1}^{j}\bar{z_2}^{2p+j}}\\
& &+\sum_{j=1}^{p}{(-1)^{j}c_{2p,j}z_1^{j}z_2^{2p+j}\bar{z_1}^{2p+j}\bar{z_2}^j}+\sum_{j=1}^{p}{(-1)^{j}c_{2p,j}z_1^{2p+j}z_2^j\bar{z_1}^{j}\bar{z_2}^{2p+j}}\\
& & -\left( \sum_{j=1}^{p}{(-1)^{j-1}c_{2p,j}(z_1 z_2 \bar{z_1} \bar{z_2})^j} \right) \left( \sum_{j=1}^{p}{(-1) c_{2p,j} (z_1 z_2 \bar{z_1} \bar{z_2})^j} \right).\\
\end{eqnarray*}

Notice that all the odd power terms drop from the product, and we get the following simplification.
\begin{eqnarray*}
\lefteqn{\left( \sum_{j=1}^{p}{(-1)^{j-1}c_{2p,j}(z_1 z_2 \bar{z_1} \bar{z_2})^j} \right) \left( \sum_{j=1}^{p}{(-1) c_{2p,j} (z_1 z_2 \bar{z_1} \bar{z_2})^j} \right) = } \\ 
& &\sum_{j=1}^{p}{\left( 2 \sum_{k=j+1}^{\text{min}(2j-1,p)}{(-1)^{k-1} c_{2p,k}c_{2p,2j-k}} +(-1)^{j-1}c_{2p,j}^2\right)(z_1 z_2 \bar{z_1} \bar{z_2})^{2j}}.
\end{eqnarray*}

With some additional effort, we write the previous expression in terms of the invariant polynomials given above,
\begin{eqnarray*}
\lefteqn{\Phi_{\Lambda_p}(z, \bar{z}) = }\\
& & (z_1^{2p}+z_2^{2p})(\bar{z_1}^{2p}+\bar{z_2}^{2p})+\sum_{j=1}^{p}{c_{2p,i}z_1^j z_2^i(z_1^{2p}+(-1)^j z_2^{2p})\bar{z_1}^j \bar{z_2}^j(\bar{z_1}^{2p}+(-1)^j \bar{z_2}^{2p})}\\
& & + \sum_{j=1}^{\left \lfloor \frac{p}{2} \right \rfloor}{\left( 2 \sum_{k=j+1}^{2j-1}{(-1)^{k-1} c_{2p,k}c_{2p,2j-k}} +(-1)^{j-1}c_{2p,j}^2-2c_{2p,2j}\right) (z_1 z_2 \bar{z_1} \bar{z_2})^{2j}}\\
& & + \sum_{j=\left \lfloor \frac{p}{2}\right \rfloor+1}^{p}{\left( 2 \sum_{k=j+1}^{p}{(-1)^{k-1} c_{2p,k}c_{2p,2j-k}} +(-1)^{j-1}c_{2p,j}^2\right) (z_1 z_2 \bar{z_1} \bar{z_2})^{2j}}\\
& & - z_1^{2p}z_2^{2p}(\bar{z_1}^{4p}+\bar{z_2}^{4p})-\bar{z_1}^{2p}\bar{z_2}^{2p}(z_1^{4p}+z_2^{4p}).\\
\end{eqnarray*}

The goal now is to determine the signs of the coefficients in the above expression.  First we take care of the obvious cases.  The coefficient of $|z_1^j z_2^j (z_1^{2p}+ (-1)^j z_2^{2p})|^2$ is $c_{2p,j}$ which is positive.  The coefficient of $|z_1^{2p}+z_2^{2p}|^2$ is positive.  We now consider the coefficient of $|z_1 z_2|^{4j}$.

\begin{definition}
Define $d_{k}$ to be the coefficient of $(z_1 \overline{z_1} z_2 \overline{z_2})^{2k}$ in $\Phi_{\Lambda_p}(z, \bar{z})$.  Further we define the polynomial $D_p$ by $$D_{p}(t)=\sum_{j=1}^p{d_{k}t^{2k}}.$$
\end{definition}

We give an exact formula for $D_p(t)$ in the next proposition.

\begin{proposition}  The polynomial $D_{p}(t)$ is given by 
\begin{eqnarray*}
D_{p}(t)=1&-&\frac{1}{4^p}\big( \left(1+ a + b + a b\right)^{2p}+ \left(1- a + b - a b\right)^{2p}\\
&+&\left(1+ a - b - a b\right)^{2p}+ \left(1- a - b + a b\right)^{2p}\big)
\end{eqnarray*} where $a=\sqrt{1-4t}$ and $b=\sqrt{1+4t}$.
\end{proposition}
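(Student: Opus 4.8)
The plan is to pass to the single variable $u=z_1\bar z_1 z_2\bar z_2=|z_1|^2|z_2|^2$ and to isolate the part of $\Phi_{\Lambda_p}$ consisting of pure powers of $u$, since by definition $d_k$ is the coefficient of $u^{2k}$. Starting from Proposition~\ref{Quaternion}, I would introduce the one-variable generating function $g(u)=\sum_{j=1}^{p}(-1)^{j-1}c_{2p,j}\,u^{j}$, which is precisely the ``diagonal'' part of $f_{2p,2p-1}(x,y)$, namely the sum of those terms that are powers of $xy$. The first step is to check that among the three summands in Proposition~\ref{Quaternion} only products of such diagonal pieces can produce a monomial of the form $u^{m}=z_1^{m}z_2^{m}\bar z_1^{m}\bar z_2^{m}$.

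To do this I would compare multidegrees $(a,b,c,d)$ in $(z_1,z_2,\bar z_1,\bar z_2)$. The off-diagonal monomials occurring in $f_{2p,2p-1}(|z_1|^2,|z_2|^2)$ and in $f_{2p,2p-1}(z_2\bar z_1,-z_1\bar z_2)$ are $z_1^{2p}\bar z_1^{2p}$, $z_2^{2p}\bar z_2^{2p}$, $z_2^{2p}\bar z_1^{2p}$, and $z_1^{2p}\bar z_2^{2p}$, and a short finite check shows that no product of two of these, and no product of one of them with a diagonal term, has all four exponents equal. Hence the pure-$u$ part of $\Phi_{\Lambda_p}$ comes from the diagonal terms alone: the first summand contributes $g(u)$; the second contributes $g(-u)$, since $(z_2\bar z_1)(-z_1\bar z_2)=-u$ forces the substitution $u\mapsto -u$; and the product term contributes $-g(u)g(-u)$. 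The pure-$u$ part of $\Phi_{\Lambda_p}$ is therefore $g(u)+g(-u)-g(u)g(-u)$, which is manifestly even in $u$, so substituting $u\mapsto t$ yields $D_p(t)=g(t)+g(-t)-g(t)g(-t)$.

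The second step inserts a closed form for $g$. Specializing Theorem~\ref{thm:cyclicproposition}(1) with $p$ replaced by $2p$ (and using $f_{2p,2p-1}(|z_1|^2,|z_2|^2)=\Phi_{\Gamma(2p,2p-1)}$) gives $g(u)=1-\mu_+^{2p}-\mu_-^{2p}$, where $\mu_\pm=\tfrac{1}{2}\bigl(1\pm\sqrt{1-4u}\bigr)$ are the two roots of $\lambda^2-\lambda+u$. Writing $a=\sqrt{1-4t}$ and $b=\sqrt{1-4(-t)}=\sqrt{1+4t}$, I would then use the elementary identity $g(t)+g(-t)-g(t)g(-t)=1-\bigl(1-g(t)\bigr)\bigl(1-g(-t)\bigr)$. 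Here $1-g(t)=\bigl(\tfrac{1+a}{2}\bigr)^{2p}+\bigl(\tfrac{1-a}{2}\bigr)^{2p}$ and $1-g(-t)=\bigl(\tfrac{1+b}{2}\bigr)^{2p}+\bigl(\tfrac{1-b}{2}\bigr)^{2p}$, so expanding the product of these two binomials and using $(1\pm a)(1\pm b)=1\pm a\pm b+(\pm a)(\pm b)$ produces exactly the four bases $1+a+b+ab$, $1-a+b-ab$, $1+a-b-ab$, and $1-a-b+ab$, each raised to the power $2p$. Collecting the accompanying powers of $\tfrac{1}{2}$ gives the scalar prefactor and hence the stated closed form for $D_p(t)$.

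The main obstacle is the bookkeeping in the first step: verifying that no cross term between off-diagonal monomials, or between an off-diagonal and a diagonal monomial, contributes to the pure-$u$ part, and tracking the signs so that the second factor becomes $g(-u)$ rather than $g(u)$. The evenness of $g(u)+g(-u)-g(u)g(-u)$ is exactly what forces $D_p$ to contain only even powers of $t$, as its definition requires, and I would double-check this together with the overall normalization against the case $p=1$. There a direct expansion gives $\Phi_{\Lambda_1}=s^2+w^2-s^2w^2$ with $s=|z_1|^2+|z_2|^2$ and $w=z_2\bar z_1-z_1\bar z_2$, whence $d_1=4$ and $D_1(t)=4t^2$; this value pins down the constant multiplying the four-term sum. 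With that normalization fixed, the remaining algebra in the second step is routine.
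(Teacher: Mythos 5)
Your proposal follows essentially the same route as the paper: both isolate the pure powers of $|z_1 z_2|^2$ from Proposition \ref{Quaternion}, substitute the closed form of Theorem \ref{thm:cyclicproposition}(1) with $2p$ in place of $p$, and expand $1-(1-g(t))(1-g(-t))$ into the four-term sum; your explicit check that no off-diagonal cross terms contribute merely makes precise the paper's phrase ``take all the terms involving $t$.'' One remark: your $p=1$ normalization check is worth carrying out, because collecting the powers of $\tfrac{1}{2}$ honestly yields a prefactor of $\tfrac{1}{4^{2p}}$ rather than the $\tfrac{1}{4^{p}}$ printed in the statement (and carried through the paper's own final display) --- with $\tfrac{1}{4^{p}}$ one gets $D_1(0)=-3$ instead of the required $0$, whereas $\tfrac{1}{4^{2p}}$ gives $D_1(t)=4t^2$ as you computed; this is a typo that does not affect the subsequent sign analysis, since only the positivity of the constant matters there.
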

\begin{proof}
By Proposition \ref{Quaternion} and Theorem \ref{thm:cyclicproposition}, 
\begin{eqnarray*}
\lefteqn{\Phi_{\Lambda_p} =}\\
& & 1+|z_1|^{4p} +|z_2|^{4p}-\left( \frac{1+\sqrt{1-4 |z_1 z_2|^2}}{2} \right)^{2p} - \left( \frac{1-\sqrt{1-4|z_1 z_2|^2}}{2}\right)^{2p}\\
&+&1+(z_2 \bar{z_1})^{2p} +(z_1 \bar{z_2})^{2p}-\left( \frac{1+\sqrt{1+4 |z_1 z_2|^2}}{2} \right)^{2p} - \left( \frac{1-\sqrt{1+4|z_1 z_2|^2}}{2}\right)^{2p}\\
&-&\left(1+|z_1|^{4p} +|z_2|^{4p}-\left( \frac{1+\sqrt{1-4 |z_1 z_2|^2}}{2} \right)^{2p} - \left( \frac{1-\sqrt{1-4|z_1 z_2|^2}}{2}\right)^{2p}\right)\\
& \times&\left(1+(z_2 \bar{z_1})^{2p} +(z_1 \bar{z_2})^{2p}-\left( \frac{1+\sqrt{1+4 |z_1 z_2|^2}}{2} \right)^{2p} - \left( \frac{1-\sqrt{1+4|z_1 z_2|^2}}{2}\right)^{2p}\right).
\end{eqnarray*}

Next we let $t=|z_1 z_2|^2$.  Take all the terms involving $t$ in the previous expression to get the following:
\begin{eqnarray*}
D_p(t)& =& 1-\left( \frac{1+\sqrt{1-4 t}}{2} \right)^{2p} - \left( \frac{1-\sqrt{1-4t}}{2}\right)^{2p}\\
&+&1-\left( \frac{1+\sqrt{1+4t}}{2} \right)^{2p} - \left( \frac{1-\sqrt{1+4t}}{2}\right)^{2p}\\
&-&\left(1-\left( \frac{1+\sqrt{1-4t}}{2} \right)^{2p} - \left( \frac{1-\sqrt{1-4t}}{2}\right)^{2p}\right)\\
&\times &\left(1-\left( \frac{1+\sqrt{1+4 t}}{2} \right)^{2p} - \left( \frac{1-\sqrt{1+4t}}{2}\right)^{2p}\right).\\
\end{eqnarray*}
Multiply this expression out to get the desired result:
\begin{eqnarray*} 
D_p(t) & = & 1- \frac{1}{4^p}\Bigg( \left( 1+\sqrt{1+4 t} \right)^{2p}\left( 1+\sqrt{1-4 t} \right)^{2p} + \left( 1+\sqrt{1-4 t} \right)^{2p}\left( 1-\sqrt{1+4 t} \right)^{2p}\\
& +& \left( 1+\sqrt{1+4 t} \right)^{2p}\left( 1-\sqrt{1-4 t} \right)^{2p}+ \left( 1-\sqrt{1-4 t} \right)^{2p}\left( 1-\sqrt{1+4 t} \right)^{2p} \Bigg).
\end{eqnarray*}
\end{proof}

\begin{lemma}  \label{l:product}  The following identity holds:
\begin{align*}  
&\left(1+ a + b + a b\right)^{2p}+ \left(1- a + b - a b\right)^{2p}+\left(1+ a - b - a b\right)^{2p}+ \left(1- a - b + a b\right)^{2p}\\
&=4 \sum_{j=0}^{p}{\sum_{k=0}^p{\binom{2p}{2j}\binom{2p}{2k}a^{2j}b^{2k}}}\\
\end{align*}
\end{lemma}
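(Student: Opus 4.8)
The plan is to recognize that each of the four bases on the left-hand side factors completely. Indeed, the key observation is that
\begin{align*}
1+a+b+ab &= (1+a)(1+b), & 1-a+b-ab &= (1-a)(1+b),\\
1+a-b-ab &= (1+a)(1-b), & 1-a-b+ab &= (1-a)(1-b).
\end{align*}
This is just a factoring-by-grouping of each quadratic expression in $a$ and $b$, and it is the one nonroutine step: once it is spotted, the rest is mechanical. I would verify these four factorizations directly (for instance, $1-a-b+ab = (1-a)(1-b)$ by expanding the right-hand side).

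Raising each factored base to the power $2p$ and using $((1\pm a)(1\pm b))^{2p} = (1\pm a)^{2p}(1\pm b)^{2p}$, the left-hand side becomes
\begin{align*}
&(1+a)^{2p}(1+b)^{2p}+(1-a)^{2p}(1+b)^{2p}+(1+a)^{2p}(1-b)^{2p}+(1-a)^{2p}(1-b)^{2p}.
\end{align*}
Next I would factor this as a product of two bracketed sums, grouping the $(1+b)^{2p}$ and $(1-b)^{2p}$ contributions:
\begin{align*}
\left[(1+a)^{2p}+(1-a)^{2p}\right]\left[(1+b)^{2p}+(1-b)^{2p}\right].
\end{align*}

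To finish, I would apply the binomial theorem to each bracket separately. Writing $(1+a)^{2p}=\sum_{i=0}^{2p}\binom{2p}{i}a^i$ and $(1-a)^{2p}=\sum_{i=0}^{2p}\binom{2p}{i}(-a)^i$, the odd-index terms cancel in the sum and the even-index terms double, giving
\begin{align*}
(1+a)^{2p}+(1-a)^{2p}=2\sum_{j=0}^{p}\binom{2p}{2j}a^{2j},
\end{align*}
and identically $(1+b)^{2p}+(1-b)^{2p}=2\sum_{k=0}^{p}\binom{2p}{2k}b^{2k}$. Multiplying the two brackets produces the factor of $4$ and the double sum $\sum_{j=0}^{p}\sum_{k=0}^{p}\binom{2p}{2j}\binom{2p}{2k}a^{2j}b^{2k}$, which is exactly the right-hand side. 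I do not anticipate any genuine obstacle here; the only content is the initial factorization, after which the identity reduces to the elementary fact that summing $(1\pm x)^{2p}$ annihilates the odd powers.
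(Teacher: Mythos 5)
Your proof is correct and follows essentially the same route as the paper: both hinge on the factorization $1\pm a\pm b+ab=(1\pm a)(1\pm b)$ followed by the binomial theorem. The only cosmetic difference is that you regroup the four products into $\left[(1+a)^{2p}+(1-a)^{2p}\right]\left[(1+b)^{2p}+(1-b)^{2p}\right]$ before expanding, while the paper expands all four products and collects terms via the identity $1+(-1)^j+(-1)^k+(-1)^{j+k}$; these are the same computation organized slightly differently.
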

\begin{proof}  We begin by factoring and using the binomial theorem.
\begin{align*}  
&\left(1+ a + b + a b\right)^{2p}+ \left(1- a + b - a b\right)^{2p}+\left(1+ a - b - a b\right)^{2p}+ \left(1- a - b + a b\right)^{2p}\\
&=\left((1+ a)(1+b)\right)^{2p}+ \left((1- a)(1+ b)\right)^{2p}+\left((1+ a)(1-b)\right)^{2p}+ \left((1- a)(1-b)\right)^{2p}\\
&=\left( \sum_{j=0}^{2p}{\binom{2p}{j}a^{j}}\right)\left( \sum_{k=0}^{2p}{\binom{2p}{k}b^{k}}\right)+
\left( \sum_{j=0}^{2p}{\binom{2p}{j}(-1)^{j}a^{j}}\right)\left( \sum_{k=0}^{2p}{\binom{2p}{k}b^{k}}\right)\\
&+\left( \sum_{j=0}^{2p}{\binom{2p}{j}a^{j}}\right)\left( \sum_{k=0}^{2p}{\binom{2p}{k}(-1)^{k}b^{k}}\right)+
\left( \sum_{j=0}^{2p}{\binom{2p}{j}(-1)^{j}a^{j}}\right)\left( \sum_{k=0}^{2p}{\binom{2p}{k}(-1)^{k}b^{k}}\right).\\
\end{align*}
After multiplying out the right hand side and collecting terms we get
\begin{align*}
\sum_{j=0}^{2p}{\sum_{k=0}^{2p}{\binom{2p}{j}\binom{2p}{k}a^j b^k \left( 1+(-1)^j+(-1)^k+(-1)^{j+k}\right)}}.\\
\end{align*}

\noindent Since $$\left( 1+(-1)^j+(-1)^k+(-1)^{j+k}\right)=\begin{cases}4 & \text{if $j$ and $k$ are both even,}\\
0 & \text{otherwise,}\\\end{cases}$$  the identity follows after reindexing.
\end{proof}

Let $a=\sqrt{z}$ and $b=\sqrt{\bar{z}}$ in the Lemma \ref{l:product}, then we have the following:
$$4 \sum_{j=0}^{p}{\sum_{k=0}^p{\binom{2p}{2j}\binom{2p}{2k}z^{j}\bar{z}^{k}}}=\left| 2\sum_{k=0}^p{\binom{2p}{2k}z^k} \right|^2.$$

\begin{lemma}  Given a polynomial $p(x+i y)$ with all negative real roots, then $\left| p(x+i y) \right|^2$ has positive coefficients.
\end{lemma}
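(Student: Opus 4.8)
The plan is to reduce the statement to an explicit factorization together with a trivial closure property of polynomials with nonnegative coefficients. First I would use the hypothesis on the roots: since $p$ has all negative real roots, write $p(t)=c\prod_{k=1}^{n}(t+s_k)$, where $n=\deg p$, the scalar $c$ is nonzero, and each $s_k>0$ is the negative of a root (listed with multiplicity). The coefficients of $p$ are then real multiples of $c$, though the argument below does not actually require $c$ to be real.

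Next I would substitute $t=x+iy$ and pass to the squared modulus. Using that complex conjugation is multiplicative, I obtain
\begin{equation*}
|p(x+iy)|^2=|c|^2\prod_{k=1}^{n}\bigl|(x+s_k)+iy\bigr|^2=|c|^2\prod_{k=1}^{n}\bigl((x+s_k)^2+y^2\bigr).
\end{equation*}
The key observation is that each factor expands as $(x+s_k)^2+y^2=x^2+2s_kx+s_k^2+y^2$, and because $s_k>0$ every coefficient occurring here is positive (the monomials $xy$ and $y$ simply do not appear in a single factor).

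Finally I would invoke the elementary closure fact: a product of polynomials all of whose coefficients are nonnegative again has nonnegative coefficients, since each coefficient of the product is a sum of products of nonnegative numbers. Multiplying by the positive scalar $|c|^2$ preserves this, so $|p(x+iy)|^2$ has no negative coefficients, which is the claim.

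I do not expect a genuine obstacle here; the only point deserving care is the precise meaning of \emph{positive coefficients}. Each factor is even in $y$, hence so is the product, and every monomial with an odd power of $y$ is therefore forced to have coefficient zero. Thus the sharp statement is that all coefficients are nonnegative (equivalently, none are negative); if one wants strictly positive coefficients, one restricts attention to the monomials $x^a y^{2b}$ that actually occur, and the same factorization shows each of these is positive.
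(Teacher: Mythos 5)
Your proof is correct and follows essentially the same route as the paper's: factor $p$ over its negative real roots, write $|p(x+iy)|^2$ as a product of factors $(x+s_k)^2+y^2=x^2+2s_kx+s_k^2+y^2$, and observe that a product of polynomials with nonnegative coefficients has nonnegative coefficients. Your additional care with the leading coefficient $c$ and with the distinction between \emph{nonnegative} and \emph{strictly positive} coefficients is a slight refinement of the paper's argument but not a different method.
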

\begin{proof}  Let $a_0$, $\cdots$, $a_d$ be the absolute values of the roots of $p$.  Then expanding and simplifying $p$, we get 
\begin{eqnarray*}
\left| p(x+i y) \right|^2 &= &\left| \prod_{j=0}^d{x+iy+a_j}\right|^2\\
&=&\prod_{j=0}^d{\left(x+iy+a_j\right)\left(x-iy+a_j\right)}=\prod_{j=0}^d{\left( x^2+2x a_j+y^2+a_j^2\right)}.\\
\end{eqnarray*}
  In the last expression only positive real coefficients occur, so after expanding the product we get the desired result.
\end{proof}

D'Angelo provided me the statement and proof of the following lemma.

\begin{lemma}  The following identity holds: $$P(z)=2\sum_{k=0}^{p}{\binom{2p}{2k}z^k}=\prod_{j=0}^{p-1}{\left(z+\tan^2{\left(\frac{(2j+1)\pi}{4p}\right)}\right)},$$ and hence all the roots of $P$ are negative.
\end{lemma}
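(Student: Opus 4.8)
The plan is to identify $P(z)$ with a simple closed form and then locate its roots directly. First I would observe that the binomial theorem gives
$$(1+w)^{2p} + (1-w)^{2p} = 2\sum_{k=0}^{p}\binom{2p}{2k} w^{2k},$$
since the odd powers of $w$ cancel. Substituting $w = \sqrt{z}$, the right side becomes exactly $P(z)$, and only even powers of $\sqrt z$ survive, so
$$P(z) = \left(1+\sqrt z\right)^{2p} + \left(1-\sqrt z\right)^{2p}$$
is a genuine polynomial of degree $p$ in $z$, with leading coefficient $2$ and constant term $2$.

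Next I would solve $P(z) = 0$. Writing $u = \sqrt z$ and noting that $z=1$ is not a root (there $P = 2^{2p}$), I can divide by $(1-u)^{2p}$ to obtain
$$\left(\frac{1+u}{1-u}\right)^{2p} = -1.$$
The $2p$-th roots of $-1$ are $\zeta_j = e^{i(2j+1)\pi/(2p)}$ for $j = 0, \ldots, 2p-1$, none of which equals $-1$, so the Cayley transform inverts to
$$u = \frac{\zeta_j - 1}{\zeta_j + 1} = i\tan\!\left(\frac{(2j+1)\pi}{4p}\right).$$
Squaring gives the candidate roots $z_j = u^2 = -\tan^2\!\left(\frac{(2j+1)\pi}{4p}\right)$.

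Then I would reconcile the count: there are $2p$ indices $j$ but $\deg P = p$. The substitution $j \mapsto 2p-1-j$ sends the angle $x_j = (2j+1)\pi/(4p)$ to $\pi - x_j$, and since $\tan^2(\pi - x) = \tan^2 x$ these produce identical values of $z_j$. Restricting to $j = 0, \ldots, p-1$ gives angles $x_j \in (0, \pi/2)$ on which $\tan^2$ is strictly increasing, so the $z_j$ are $p$ distinct negative numbers. As this matches $\deg P$, each is a simple root, and comparing leading coefficients yields
$$P(z) = 2\prod_{j=0}^{p-1}\left(z + \tan^2\!\left(\frac{(2j+1)\pi}{4p}\right)\right).$$
The constant $2$ accounts for the leading coefficient of $P$; it is harmless in the intended application, since multiplying by a positive scalar does not affect positivity of the coefficients of $\left|P\right|^2$ in the previous lemma. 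In particular every root equals $-\tan^2(\cdots) < 0$.

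The only delicate points — and hence the main obstacle — are the bookkeeping in the last paragraph: verifying that the $2p$ solutions collapse to exactly $p$ distinct roots, that none of the relevant angles equals $\pi/2$ (so every $\tan$ is finite, consistent with $P$ having no "root at infinity"), and that $\zeta_j \neq \pm 1$, which legitimizes both the Cayley transform and the division by $(1-u)^{2p}$. Each of these reduces to the parity observation that $2j+1$ is odd while $2p$ is even, so $(2j+1)/(2p)$ is never a half-integer; once these are checked the degree count forces the factorization.
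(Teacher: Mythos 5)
Your proof is correct and follows essentially the same route as the paper: rewrite $P(z)=\left(1+\sqrt{z}\right)^{2p}+\left(1-\sqrt{z}\right)^{2p}$ via the binomial theorem, solve $\left(\tfrac{1+u}{1-u}\right)^{2p}=-1$ by the Cayley transform, and square $u=i\tan\bigl(\tfrac{(2j+1)\pi}{4p}\bigr)$ to locate the roots. You are in fact more careful than the paper on two points it glosses over: the collapse of the $2p$ solutions to $p$ distinct roots matching $\deg P$, and the leading coefficient --- since $P$ has leading coefficient $2$ while the displayed product is monic, the identity as stated is off by a factor of $2$ (compare $P(0)=2$ with $\prod_j\tan^2\bigl(\tfrac{(2j+1)\pi}{4p}\bigr)=1$), and your version $P(z)=2\prod_{j=0}^{p-1}\bigl(z+\tan^2\bigl(\tfrac{(2j+1)\pi}{4p}\bigr)\bigr)$ is the correct one; as you note, this constant is harmless for the negativity-of-roots conclusion that the surrounding argument actually uses.
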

\begin{proof}  Taking proper care of the choice of square root, we can rewrite the given polynomial in the following way: $$P(z)=2\sum_{k=0}^{p}{\binom{2p}{2k}z^k}=\left( 1-\sqrt{z} \right)^{2p}+\left( 1+\sqrt{z} \right)^{2p}.$$

Setting the right hand side equal to zero yields
\begin{equation*}
\left( \frac{1-\sqrt{z}}{1+\sqrt{z}}\right)^{2p}=-1.
\end{equation*}

Taking $2p$-th roots gives
\begin{equation*}
 \frac{1-\sqrt{z}}{1+\sqrt{z}}= e^{\frac{(2n+1)\pi i}{2p}}
\end{equation*}
for $n=0, \cdots, 2p-1$.

We solve for $\sqrt{z}$:
\begin{align*}
\sqrt{z} &= \frac{1-e^{\frac{(2n+1)\pi i}{2p}}}{1+e^{\frac{(2n+1)\pi i}{2p}}}
= \frac{e^{-\frac{(2n+1)\pi i}{4p}}-e^{\frac{(2n+1)\pi i}{4p}}}{e^{-\frac{(2n+1)\pi i}{4p}}+e^{\frac{(2n+1)\pi i}{4p}}}= i \tan{\left( \frac{(2n+1)\pi}{4p}\right)}.
\end{align*}

The roots of $P$ are therefore $-\tan^2{\left(\frac{(2j+1)\pi}{4p}\right)}$, and hence the identity follows.
\end{proof}

Finally we combine the previous lemmas to determine the sign of $d_{k}$.

\begin{proposition}  \label{proposition:dksign} For $1 \leq k \leq p$,
\begin{enumerate}
\item $d_{k} > 0$ for $k$ odd.
\item $d_{k} < 0$ for $k$ even. 
\end{enumerate}
\end{proposition}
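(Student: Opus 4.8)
The plan is to feed D'Angelo's factorization of $P$ into the exact formula for $D_p(t)$ established in the preceding proposition, and thereby realize $D_p$ as $1$ minus a positive constant times a polynomial in $t^2$ whose roots, viewed in the variable $16t^2$, are all positive. The alternating sign pattern of the $d_k$ then comes directly from the elementary symmetric functions of those positive roots.

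First I would write $\tau_j=\tan^2\!\bigl(\tfrac{(2j+1)\pi}{4p}\bigr)$ and recall from Lemma~\ref{l:product} (with $a=\sqrt{1-4t}$ and $b=\sqrt{1+4t}$, so that $a^2=1-4t$ and $b^2=1+4t$) that the four-term bracket appearing in the formula for $D_p$ factors as $P(1-4t)\,P(1+4t)$. Substituting the factorization of $P$, all of whose roots are $-\tau_j$, and using
\[(1-4t+\tau_j)(1+4t+\tau_j)=(1+\tau_j)^2-16t^2,\]
I obtain an expression of the form
\[D_p(t)=1-c_p\prod_{j=0}^{p-1}\bigl((1+\tau_j)^2-16t^2\bigr),\]
where $c_p>0$ collects the prefactor from the formula for $D_p$ together with the positive leading constant of the factorization of $P$. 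Its precise value is irrelevant; only $c_p>0$ matters.

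Next I would read off the signs. Put $r_j=(1+\tau_j)^2>0$ and $u=16t^2$. Expanding gives $\prod_{j}(r_j-u)=\sum_{m=0}^{p}(-1)^m e_{p-m}(r_0,\dots,r_{p-1})\,u^m$, where each elementary symmetric function $e_\ell(r)$ is strictly positive because every $r_j>0$; equivalently, this positivity is exactly what the preceding lemma on $\lvert p(x+iy)\rvert^2$ guarantees, taking $x=1$ and $y^2=-16t^2$. Hence the coefficient of $t^{2k}$ in the product is $(-1)^k16^k e_{p-k}(r)$, whose sign is $(-1)^k$. Multiplying by $-c_p$ shows $d_k=(-1)^{k+1}c_p\,16^k e_{p-k}(r)$, so $d_k>0$ when $k$ is odd and $d_k<0$ when $k$ is even, for every $1\le k\le p$, which is precisely the range in which $e_{p-k}(r)$ is a nonempty product and hence positive.

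Because all the analytic content has already been isolated in the earlier lemmas, the only real work here is the algebraic assembly and the sign bookkeeping. The step I expect to be most delicate is the pairing that turns $\prod_j(1-4t+\tau_j)\prod_j(1+4t+\tau_j)$ into $\prod_j\bigl((1+\tau_j)^2-16t^2\bigr)$, together with the substitution $u=16t^2$ (equivalently $y^2=-16t^2$): this is the move that converts an all-positive-roots product into an expansion with perfectly alternating signs, and it is where one must keep $c_p$ positive and confirm that no constant term survives, consistent with $D_p$ having no $t^0$ term.
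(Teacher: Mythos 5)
Your proof is correct and follows essentially the same route as the paper: both feed the negative-roots factorization of $P$ into the exact product formula for $D_p(t)$ and read off the alternating signs of the $d_k$. The only cosmetic difference is that the paper substitutes $t\mapsto it$ so as to invoke the all-positive-coefficients lemma for $\lvert p(x+iy)\rvert^2$ and then undoes the substitution, whereas you expand $\prod_j\bigl((1+\tau_j)^2-16t^2\bigr)$ directly via elementary symmetric functions; the sign bookkeeping is equivalent.
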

\begin{proof}  We have the following relationship between $D_{p}(t)$ and $P(z)$:  $$D(i t)= 1-\frac{1}{4^p} P(1+4 i t).$$  By the previous lemmas $P(z)$ has all positive coefficients, thus $D_{p}(it)$ must have all negative coefficients.  Since $D(t)$ is a polynomial with only even powers, the transformation $t \mapsto i t$ changes the sign of $d_{k}$ for odd $k$ and does not change the sign of $d_{k}$ when $k$ is even.  Therefore $d_{k}$ must be positive for $k$ odd and negative for $k$ even.
\end{proof}

We rephrase the results in terms of matrices: $$\Phi_{\Lambda_p}(z, \bar{z})= d^{*}M_p d$$ where 
$$d=\begin{pmatrix}
z_1^{2p}+z_2^{2p}\\
z_1 z_2(z_1^{2p}+(-1) z_2^{2p})\\
\vdots\\
z_1^p z_2^p(z_1^{2p}+(-1)^p z_2^{2p})\\
z_1^{2} z_2^{2}\\
\vdots\\
z_1^{2p} z_2^{2p}\\
z_1^{4p}+z_2^{4p}\\
\end{pmatrix},$$ and $$M_p=\begin{pmatrix}
1 & 0   & 0 & 0\\
0 & E_{p,1} & 0 & 0\\
0 & 0 & E_{p,2} & 0\\
0 & 0 & 0   & E_{p,3}\\
\end{pmatrix}$$ where $E_{p,1}$ is the $p$ by $p$ matrix with $c_{2p,j}$ on the diagonal.  Also $E_{p,2}$ is the square matrix of size $p-1$ with diagonal entries $$(E_{p,2})_{jj}=2 \sum_{k=j+1}^{2j-1}{(-1)^{k-1} c_{2p,k}c_{2p,2j-k}} +(-1)^{j-1}c_{2p,j}^2-2c_{2p,2j}$$  for $1 \leq j \leq \left \lfloor \frac{p}{2} \right \rfloor$, and $$(E_{p,2})_{jj}=2 \sum_{k=j+1}^{p}{(-1)^{k-1} c_{2p,k}c_{2p,2j-k}} +(-1)^{j-1}c_{2p,j}^2$$  for $\left \lfloor \frac{p}{2} \right \rfloor < j \leq p-1$.   Finally we have the 2 by 2 matrix $$E_{p,3}=\begin{pmatrix} (-1)^{p-1}c_{2p,p}^2 & -1 \\ -1 &0\\ \end{pmatrix}.$$

Now we are left with the task of computing the signature pair of $M_p$.  We proceed by counting the number of eigenvalues of each sign in the submatrices.

\begin{proposition}  For $1 \leq j \leq p-1$, 
\begin{enumerate}
\item $(E_{p,2})_{jj}>0$ if $j$ is odd.
\item $(E_{p,2})_{jj}<0$ if $j$ is even.
\end{enumerate}
\end{proposition}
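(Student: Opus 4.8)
The plan is to recognize that the diagonal entries $(E_{p,2})_{jj}$ are exactly the coefficients $d_j$ whose signs were already determined in Proposition \ref{proposition:dksign}; no new sign analysis is required, and the entire content is to match the two quantities.

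First I would note, from the factorization $\Phi_{\Lambda_p}(z,\bar z)=d^{*}M_p d$, that for $1\le j\le p-1$ the diagonal entry $(E_{p,2})_{jj}$ is the coefficient attached to the basis vector $z_1^{2j}z_2^{2j}$ of $d$, and is therefore the coefficient of $|z_1^{2j}z_2^{2j}|^2=(z_1\bar z_1 z_2\bar z_2)^{2j}$ in $\Phi_{\Lambda_p}$. Because $M_p$ is block diagonal and $E_{p,2}$ is itself diagonal, no other pairing of basis vectors in $d$ can produce this balanced real monomial: the leading block and the $E_{p,1}$ block yield monomials whose degrees in $|z_1|^2$ and $|z_2|^2$ differ by a nonzero multiple of $2p$, while the only off-diagonal entries of $M_p$ (inside $E_{p,3}$) produce terms of the form $z_1^{2p}z_2^{2p}\overline{(z_1^{4p}+z_2^{4p})}$, which are not of the balanced type. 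Hence the coefficient of $(z_1\bar z_1 z_2\bar z_2)^{2j}$ is exactly $(E_{p,2})_{jj}$.

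Next I would invoke the definition of $d_k$ as the coefficient of $(z_1\bar z_1 z_2\bar z_2)^{2k}$ in $\Phi_{\Lambda_p}$. The previous step then gives $(E_{p,2})_{jj}=d_j$ for every $1\le j\le p-1$, the remaining coefficient $d_p$ being absorbed into the $(1,1)$ entry $(-1)^{p-1}c_{2p,p}^2$ of the $E_{p,3}$ block. Applying Proposition \ref{proposition:dksign}, which asserts $d_k>0$ for $k$ odd and $d_k<0$ for $k$ even, we conclude that $(E_{p,2})_{jj}>0$ when $j$ is odd and $(E_{p,2})_{jj}<0$ when $j$ is even, which is the assertion.

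There is no genuine difficulty in this argument; the only step needing care is confirming that the balanced monomial $(z_1\bar z_1 z_2\bar z_2)^{2j}$ receives no hidden contribution from the other blocks of $M_p$, so that the identification $(E_{p,2})_{jj}=d_j$ is exact rather than merely approximate. Once this bookkeeping is settled, the proposition follows immediately from the sign computation already carried out for the polynomial $D_p(t)$ via the root structure of $P(z)$.
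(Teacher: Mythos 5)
Your proposal is correct and follows exactly the paper's route: the paper's proof is the one-line remark that the statement ``follows from Proposition \ref{proposition:dksign},'' which implicitly relies on the identification $(E_{p,2})_{jj}=d_j$ that you spell out. Your added bookkeeping---checking that the other blocks of $M_p$ contribute no balanced monomials $(z_1\bar z_1 z_2\bar z_2)^{2j}$ and that $d_p$ lands in the $(1,1)$ entry of $E_{p,3}$---is accurate and simply makes the paper's implicit step explicit.
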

\begin{proof}  Follows from Proposition \ref{proposition:dksign}.
\end{proof}

The diagonal matrices $E_{p,1}$ and $E_{p,2}$ have non-zero diagonal entries.  The submatrix $E_{p,1}$ has $p$ eigenvalues, all of which are positive.  Moreover, by the proposition, the diagonal entries in the matrices $E_{p,2}$ alternate sign.  Also the matrix $E_{p,3}$ has one eigenvalue of each sign.  Thus combining these results for the submatrices of $M_p$, we obtain one of our main results.

\begin{theorem}  The signature pair of the binary dihedral group with $4p$ elements is given by $$S(\Lambda_p)=\left(2+p+\left \lfloor \frac{p}{2} \right \rfloor, 1+\left \lfloor \frac{p-1}{2} \right \rfloor \right).$$
\end{theorem}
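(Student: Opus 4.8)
The plan is to read off the signature of $\Phi_{\Lambda_p}$ directly from the block-diagonal matrix $M_p$ already constructed, since all of the analytic work---the closed form for $D_p(t)$, the factorization of $P(z)$, and the resulting sign pattern of the $d_k$---is complete. Because the entries of the vector $d$ are linearly independent $Q_p$-invariant polynomials, the identity $\Phi_{\Lambda_p} = d^{*} M_p d$ exhibits the underlying Hermitian form in a genuine basis. By Sylvester's law of inertia (used in the proof of Proposition~\ref{p:changebasis}), the signature pair $S(\Lambda_p)$ therefore coincides with the pair of numbers of positive and negative eigenvalues of $M_p$, and since $M_p$ is block diagonal I would count these block by block and add.

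First I would dispose of the definite contributions. The upper-left entry $1$ supplies one positive eigenvalue, and the block $E_{p,1}$ is the $p \times p$ diagonal matrix with entries $c_{2p,j}$, all positive by Theorem~\ref{thm:cyclicproposition}, so it supplies $p$ positive and no negative eigenvalues. Next I would handle the indefinite diagonal block $E_{p,2}$ of size $p-1$: by the preceding proposition its diagonal entries are nonzero and alternate in sign, being positive exactly when $j$ is odd and negative exactly when $j$ is even, as $j$ ranges over $1, \ldots, p-1$. Hence $E_{p,2}$ supplies one positive eigenvalue for each odd $j$ and one negative eigenvalue for each even $j$ in that range. Finally, the $2 \times 2$ block $E_{p,3}$ has determinant $-1$, so it is indefinite and supplies exactly one positive and one negative eigenvalue, independently of the sign of its $(1,1)$-entry.

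It remains only to assemble the counts. Collecting the negative contributions gives $N^{-}(\Lambda_p) = \#\{\, 1 \le j \le p-1 : j \text{ even}\,\} + 1 = \left\lfloor \frac{p-1}{2} \right\rfloor + 1$, and collecting the positive contributions gives $N^{+}(\Lambda_p) = 1 + p + \#\{\, 1 \le j \le p-1 : j \text{ odd}\,\} + 1 = p + 2 + \left\lfloor \frac{p}{2} \right\rfloor$, where I use the elementary identity $\#\{\, 1 \le j \le p-1 : j \text{ odd}\,\} = \left\lceil \frac{p-1}{2} \right\rceil = \left\lfloor \frac{p}{2} \right\rfloor$. These are precisely the two components claimed.

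There is no genuine obstacle here; the only point demanding care is the index bookkeeping---correctly translating the numbers of odd and even $j$ in the range $1 \le j \le p-1$ into floor expressions, and ensuring that the scalar $1$, the block $E_{p,1}$, the block $E_{p,2}$, and the block $E_{p,3}$ are each counted once and only once. As a consistency check the totals sum to $N^{+} + N^{-} = 2p+2$, the size of $M_p$, confirming no eigenvalue is lost; and for $p = 2$, where $E_{p,2}$ is empty and $E_{p,1}$ is $2 \times 2$, the formula predicts $S(\Lambda_2) = (5,1)$, in agreement with the explicit eigenvalue computation carried out at the start of this subsection.
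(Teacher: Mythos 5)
Your proof is correct and follows essentially the same route as the paper: read the signature off the block-diagonal matrix $M_p$, using the positivity of the diagonal entries $c_{2p,j}$ of $E_{p,1}$, the alternating signs of the diagonal of $E_{p,2}$, and the indefiniteness of the $2\times 2$ block $E_{p,3}$ (negative determinant), then tally the blocks. (One tiny slip in your final sanity check: for $p=2$ the block $E_{p,2}$ is not empty but has size $p-1=1$ with the single positive entry $12$, which is needed for the total $2p+2=6$ to come out right; the predicted pair $(5,1)$ is unaffected.)
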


Taking the limit as $p$ goes to infinity we get the following theorem.

\begin{theorem} The asymptotic positivity ratio for $\Lambda_p$ is $\frac{3}{4}$.
\end{theorem}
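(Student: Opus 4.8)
The plan here is purely computational: all of the structural work has already been carried out in the preceding theorem, which supplies the exact signature pair, so the only task is to extract the leading-order asymptotics of the positivity ratio. I would begin by recording the two components,
$$N^{+}(\Lambda_p)=2+p+\left\lfloor\frac{p}{2}\right\rfloor,\qquad N^{-}(\Lambda_p)=1+\left\lfloor\frac{p-1}{2}\right\rfloor,$$
so that $N(\Lambda_p)=N^{+}(\Lambda_p)+N^{-}(\Lambda_p)$ and $L(\Lambda_p)=N^{+}(\Lambda_p)/N(\Lambda_p)$. Ignoring for a moment the bounded additive constants and the floor corrections, one sees immediately that $N^{+}(\Lambda_p)\sim\tfrac{3p}{2}$ and $N^{-}(\Lambda_p)\sim\tfrac{p}{2}$, whence $N(\Lambda_p)\sim 2p$ and $L(\Lambda_p)\to\tfrac34$. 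To make this rigorous the only mild wrinkle is the floor functions, which I would dispose of by splitting on the parity of $p$.

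For $p$ even one has $\lfloor p/2\rfloor=p/2$ and $\lfloor(p-1)/2\rfloor=p/2-1$, so $N^{+}(\Lambda_p)=2+\tfrac{3p}{2}$, $N^{-}(\Lambda_p)=\tfrac{p}{2}$, and $N(\Lambda_p)=2+2p$, giving
$$L(\Lambda_p)=\frac{2+\tfrac{3p}{2}}{2+2p}\longrightarrow\frac34.$$
For $p$ odd one has $\lfloor p/2\rfloor=\lfloor(p-1)/2\rfloor=(p-1)/2$, so $N^{+}(\Lambda_p)=\tfrac{3(p+1)}{2}$ and $N^{-}(\Lambda_p)=\tfrac{p+1}{2}$, whence $N(\Lambda_p)=2(p+1)$ and in fact $L(\Lambda_p)=\tfrac34$ identically. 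Since both parity subsequences converge to the same value, the full limit exists and equals $\tfrac34$, which is the assertion.

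I do not expect any genuine obstacle in this step: the difficulty of the result lives entirely in the exact eigenvalue count of $M_p$ established above, and once that signature pair is in hand the asymptotic statement is a one-line limit. The only point requiring a modicum of care is the treatment of the floor functions, and splitting into the even and odd cases resolves it cleanly (indeed the odd case yields equality on the nose rather than merely in the limit).
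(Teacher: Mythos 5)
Your proposal is correct and follows exactly the route the paper takes: the paper derives this theorem by simply taking the limit of $L(\Lambda_p)$ using the signature pair $S(\Lambda_p)=\left(2+p+\left\lfloor \frac{p}{2}\right\rfloor,\,1+\left\lfloor \frac{p-1}{2}\right\rfloor\right)$ established just before it. Your parity split and arithmetic check out (including the observation that $L(\Lambda_p)=\frac34$ exactly for odd $p$), so this is just a more explicit write-up of the same one-line limit.
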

 
\subsection{Binary Tetrahedral Group}

The binary tetrahedral group is given by $$T:=\left< a, b \;   | \; a^3=b^3=(a b)^2\right>$$  and has order 24.  We represent $T$ in $SU(2)$ using the Springer description \cite{Sp77}.  Let 
\begin{equation} \label{tetrahedralgens}
r = 
\begin{pmatrix} 
\epsilon & 0\\ 
0 & \epsilon^{-1}
\end{pmatrix} \; \;
s = 
\begin{pmatrix} 
0 & 1\\ 
-1 & 0
\end{pmatrix} \; \;
t = 
\frac{1}{\sqrt{2}}\begin{pmatrix} 
\epsilon^{-1} & \epsilon^{-1}\\
-\epsilon & \epsilon
\end{pmatrix}
\end{equation}
where $\epsilon=e^{\frac{\pi i}{4}}$.

Define the faithful unitary representation $\kappa : T \to SU(2)$ by $$\kappa(a)= st^{-1} \;\text{ and } \; \kappa(b)= t.$$  
Let $\Gamma = \kappa(T)$.  For all $\gamma \in \Gamma$ we can represent $\gamma$ in the following way:$$\gamma= r^{2j} s^k t^l$$ for some $0 \leq j < 3$, $0 \leq k < 2$, and $0\leq l < 3$.  We remark that all faithful representations of $T$ in $SU(2)$ are equivalent to the representation given by $\kappa$.

We express $\Phi_\Gamma$ in terms in $\Gamma$-invariant polynomials.    The following 14 linearly independent $\Gamma$-invariant polynomials appear:

\begin{tabular}{lcl}
$C_1$ & $=$ & $z_1^{16}+28 z_1^{12} z_2^4+198 z_1^8 z_2^8+28 z_1^4 z_2^{12}+z_2^{16}$ \\[2pt]
$C_2$ & $=$ & $z_1^{20}-19 z_1^{16} z_2^4-494 z_1^{12} z_2^8-494 z_1^8 z_2^{12}-19 z_1^4 z_2^{16}+z_2^{20}$
\\[2pt]
$C_3$ & $=$ & $z_1^{18} z_2^2+12 z_1^{14} z_2^6-26 z_1^{10} z_2^{10}+12 z_1^6 z_2^{14}+z_1^2 z_2^{18}$
\\[2pt]
$C_4$ & $=$ & $-z_1^{21} z_2-27 z_1^{17} z_2^5-170 z_1^{13} z_2^9+170 z_1^9 z_2^{13}+27 z_1^5 z_2^{17}+z_1
z_2^{21}$ \\[2pt]
$C_5$ & $=$ & $-z_1^{15} z_2^3+3 z_1^{11} z_2^7-3 z_1^7 z_2^{11}+z_1^3 z_2^{15}$ \\[2pt]
$C_6$ & $=$ & $z_1^{12}-33 z_1^8 z_2^4-33 z_1^4 z_2^8+z_2^{12}$ \\[2pt]
$C_7$ & $=$ & $-z_1^{13} z_2-13 z_1^9 z_2^5+13 z_1^5 z_2^9+z_1 z_2^{13}$ \\[2pt]
$C_8$ & $=$ & $-z_1^{17} z_2+34 z_1^{13} z_2^5-34 z_1^5 z_2^{13}+z_1 z_2^{17}$ \\[2pt]
$C_9$ & $=$ & $z_1^8+14 z_1^4 z_2^4+z_2^8$ \\[2pt]
$C_{10}$ & $=$ & $z_1^{24}+\left(-\frac{4692}{35}+\frac{1}{35} \left(2382+\sqrt{119948010}\right)\right) z_1^{20} z_2^4$\\[2pt]
 & $+$& $\left(\frac{45333}{35}+\frac{4}{35}\left(-2382-\sqrt{119948010}\right)\right) z_1^{16} z_2^8$\\[2pt]
& $+$ &$\left(\frac{62008}{35}-\frac{6}{35} \left(-2382-\sqrt{119948010}\right)\right)z_1^{12} z_2^{12}$\\[2pt]
& $+$& $\left(\frac{45333}{35}+\frac{4}{35} \left(-2382-\sqrt{119948010}\right)\right) z_1^8 z_2^{16}$\\[2pt]
& $+$& $\left(-\frac{4692}{35}+\frac{1}{35}\left(2382+\sqrt{119948010}\right)\right) z_1^4 z_2^{20}+z_2^{24}$ \\[2pt]
$C_{11}$ & $=$ & $z_1^{10} z_2^2-2 z_1^6 z_2^6+z_1^2 z_2^{10}$ \\[2pt]
$C_{12}$ & $=$ & $z_1^{24}+\left(-\frac{4692}{35}+\frac{1}{35} \left(2382-\sqrt{119948010}\right)\right) z_1^{20} z_2^4$\\[2pt]
&$+$& $\left(\frac{45333}{35}+\frac{4}{35}\left(-2382+\sqrt{119948010}\right)\right) z_1^{16} z_2^8$\\[2pt]
&$+$&$\left(\frac{62008}{35}-\frac{6}{35} \left(-2382+\sqrt{119948010}\right)\right)z_1^{12} z_2^{12}$\\[2pt]
&$+$&$\left(\frac{45333}{35}+\frac{4}{35} \left(-2382+\sqrt{119948010}\right)\right) z_1^8 z_2^{16}$\\[2pt]
& $+$&$\left(-\frac{4692}{35}+\frac{1}{35}
\left(2382-\sqrt{119948010}\right)\right) z_1^4 z_2^{20}+z_2^{24}$ \\[2pt]
$C_{13}$ & $=$ & $z_1^{22} z_2^2-35 z_1^{18} z_2^6+34 z_1^{14} z_2^{10}+34 z_1^{10} z_2^{14}-35 z_1^6 z_2^{18}+z_1^2
z_2^{22}$ \\[2pt]
$C_{14}$ & $=$ & $-z_1^5 z_2+z_1 z_2^5$.\\
\end{tabular}

Define
$$A(z)=\begin{pmatrix}
\sqrt{\frac{305805}{128}} C_1\\ 
\sqrt{\frac{122199}{64}} C_2\\
\sqrt{\frac{14815}{16}} C_4\\
\sqrt{740} C_5\\ 
\sqrt{\frac{2725}{4}} C_6\\ 
\sqrt{\frac{495}{4}} C_9\\ 
\sqrt{\frac{1}{128} (-2382 + \sqrt{119948010})} C_{12}\\
\sqrt{\frac{1191}{32}} C_{13}\\ 
\sqrt{24} C_{14}
\end{pmatrix}$$ and $$B(z)=\begin{pmatrix}
\sqrt{\frac{48783}{32}}C_3\\
\sqrt{680} C_7\\
\sqrt{\frac{1157}{2}} C_8\\ 
\sqrt{\frac{1}{128} (2382 + \sqrt{119948010})} C_{10}\\ 
\sqrt{\frac{171}{2}} C_{11}
\end{pmatrix}.$$  

Using Mathematica \cite{Mathematica} one can verify that $\Phi_\Gamma$ decomposes in the following way $$\Phi_{\Gamma}=||A(z)||^2-||B(z)||^2.$$  Therefore $S(\Gamma)=(9,5)$.

\begin{remark}  If $\Gamma<SU(2)$, and $\Gamma$ is isomorphic to $T$, then $S(\Gamma)=\left( 9,5 \right)$.
\end{remark}

\subsection{Binary Octahedral Group}

The binary octahedral group is given by $$O:=\left< a, b \;   | \; a^4=b^3=(a b)^2\right>$$  and has order 48.  We again represent $O$ in $SU(2)$ using the Springer description \cite{Sp77}.  Recall the generators of the binary tetrahedral group $r$, $s$, and $t$ given above in \eqref{tetrahedralgens}.  Let $\tau : O \to SU(2)$ be a faithful unitary representation generated by $$\tau(a)= rt \;\text{ and } \; \tau(b)= t.$$  Notice that $$(rt)^4=t^3=(rt^2)^2=-1.$$  

Let $\Gamma = \tau(O)$. For all $\gamma\in \Gamma$ we can represent $\gamma$ in the following way:$$\gamma= r^{j} s^k t^l$$ for some $0 \leq j < 8$, $0 \leq k < 2$, and $0\leq l < 3$.  We remark that all faithful representations of $O$ in $SU(2)$ are equivalent to the representation given by $\tau$.

The $\Gamma$-invariant polynomial $\Phi_\Gamma$ has 1143 terms.  Using Mathematica we decompose $$\Phi_\Gamma=d^{*}Md$$ where $M$ is the Hermitian coefficient matrix and $d$ is the vector of 135 monomials that appear in $\Phi_\Gamma$.  Again using Mathematica we find that $M$ has rank 26 with 17 positive eigenvalues and 9 negative eigenvalues.  

\begin{remark}  Let $\Gamma<SU(2)$ such that $\Gamma$ is isomorphic to $O$, then $$S(\Gamma)=\left( 17,9 \right).$$
\end{remark}

\subsection{Binary Icosahedral Group}

The binary icosahedral group is given by $$I:=\left< a, b \;   | \; a^5=b^3=(a b)^2\right>$$  and has order 120.  We again represent $I$ in $SU(2)$ using the Springer description \cite{Sp77}.  Let
\begin{equation} \label{icosahedralgens}
r = 
-\begin{pmatrix} 
\epsilon^3 & 0\\ 
0 & \epsilon^{2}
\end{pmatrix} \; \;
s = 
\begin{pmatrix} 
0 & 1\\ 
-1 & 0
\end{pmatrix} \; \;
t = 
\frac{1}{\epsilon^2-\epsilon^{-2}}\begin{pmatrix} 
\epsilon+\epsilon^{-1} & 1\\
1 & -\epsilon-\epsilon^{-1}
\end{pmatrix}
\end{equation}
where $\epsilon=e^{\frac{2\pi i}{5}}$.  Then we define a representation of $I$ in $SU(2)$ by $a=r$ and $b=r^4ts$.  Notice that $$(r)^5=(r^4ts)^3=(r^5ts)^2=-1.$$  In \cite{Sp77}, Springer describes the 120 elements in the binary icosahedral group as follows:  $$\Gamma = \left\{r^h, s r^h, r^ht r^{j},r^h t s r^j | 0\leq h <10, 0 \leq j<5 \right\}.$$ 

The invariant polynomial $\Phi_\Gamma$ has about 500,000 terms in this case.  Using Mathematica we decompose $$\Phi_\Gamma=d^{*}Md$$ where $M$ is the Hermitian coefficient matrix and $d$ is the vector of monomials that appear in $\Phi_\Gamma$.  Again using Mathematica we find that $M$ has rank 62 with 40 positive eigenvalues and 22 negative eigenvalues.  

\begin{remark}  Let $\Gamma<SU(2)$ such that $\Gamma$ is isomorphic to $I$, then $$S(\Gamma)=\left( 40,22 \right).$$
\end{remark}

\section{The Asymptotic Positivity Ratio for the Cyclic Case}
In this section we study the signs of the coefficients of the polynomial $f_{p,q}$.  Since $\Gamma(p,q)$ is a diagonal subgroup, the sign of a coefficient of $f_{p,q}$ corresponds to the sign of an eigenvalue of the underlying matrix of $\Phi_{\Gamma(p,q)}$.  When $q=1$ or $q=2$, we know the exact numbers of positive and negative coefficients.  For general $q$ however, it becomes difficult to determine these numbers exactly.  Instead, we find upper and lower bounds for the number of coefficients of each sign.  We use these bounds to compute the asymptotic positivity ratio as a rational function of $q$.  Then we take the limit as $q$ goes to infinity to show that for the $\Gamma(p,q)$ the asymptotic positivity ratio is $\frac{3}{4}$.

Suppose $$f_{p,q}=\sum_{0\leq r,s \leq p}{c_{r,s}x^r y^s}.$$  Since $f_{p,q}$ is $\Gamma(p,q)$-invariant and the degree is at most $p$, we have $r+qs=kp$ for some $k\in \{1,\cdots,q \}$ and $r+s\leq p$.  In \cite{D2} D'Angelo shows that $c_{r,s}$ is a non-zero integer whenever $x^r y^s$ is an invariant monomial, so the above question translates to determining the number of non-negative integer solutions to the equations $r+qs=kp$ for $k\in \{1,\cdots,q \}$ when $0<r+s \leq p$.  For clarity we formally introduce the following notation.

\begin{notation}  The number of weight $k$ terms in $f_{p,q}$ is $N_{k}(\Gamma(p,q))$.  Denote the number of terms of odd weight by $N_{odd}(\Gamma(p,q))$, and the number of terms of even weight by $N_{even}(\Gamma(p,q))$.
\end{notation}

Also notice that the number of terms of $f_{p,q}$ is the same as the number of eigenvalues since we are using the diagonally generated cyclic group $\Gamma(p,q)$.

The following two lemmas estimate the number of terms of each weight and the total number of terms.
\begin{lemma}  \label{lemmaNWt} The following inequality holds: $$\left|N_k(\Gamma(p,q))-\frac{q-k}{q-1}\cdot\frac{p}{q}\right| \leq 1. $$
\end{lemma}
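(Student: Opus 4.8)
The plan is to count exactly the non-negative integer solutions $(r,s)$ to the weight-$k$ equation $r+qs=kp$ subject to $r+s\le p$, and then compare that count to the stated target $\frac{q-k}{q-1}\cdot\frac{p}{q}$. First I would observe that for fixed $k$, the equation $r+qs=kp$ determines $r=kp-qs$, so a solution is entirely specified by the value of $s$. The constraint $r\ge 0$ forces $s\le kp/q$, while the degree constraint $r+s\le p$, after substituting $r=kp-qs$, becomes $kp-qs+s\le p$, i.e. $s\ge \frac{(k-1)p}{q-1}$. Thus the number of admissible $s$ is the number of integers in the half-open (or closed) interval $\left[\frac{(k-1)p}{q-1},\,\frac{kp}{q}\right]$. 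The remaining task is purely to estimate the number of integers in this interval.

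The key step is to compute the length of that interval and show it agrees with the target to within an additive error of at most $1$. The length is
\begin{equation*}
\frac{kp}{q}-\frac{(k-1)p}{q-1}=\frac{kp(q-1)-(k-1)pq}{q(q-1)}=\frac{p(kq-k-kq+q)}{q(q-1)}=\frac{p(q-k)}{q(q-1)}=\frac{q-k}{q-1}\cdot\frac{p}{q},
\end{equation*}
which is exactly the claimed main term. Since the number of integers in any real interval $[\alpha,\beta]$ differs from its length $\beta-\alpha$ by strictly less than $1$ in absolute value (indeed the count is $\lfloor\beta\rfloor-\lceil\alpha\rceil+1$, which lies within $1$ of $\beta-\alpha$), the desired bound $\left|N_k(\Gamma(p,q))-\frac{q-k}{q-1}\cdot\frac{p}{q}\right|\le 1$ follows immediately once $N_k(\Gamma(p,q))$ is identified with this integer count.

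I expect the main obstacle to be a careful accounting of the endpoints and of the lower constraint $0<r+s$. Specifically, I would need to confirm that the interval used is the correct one: whether the endpoints $s=\frac{(k-1)p}{q-1}$ and $s=\frac{kp}{q}$ themselves yield genuine solutions (which happens precisely when these fractions are integers, corresponding to boundary monomials such as pure powers of $x$ or $y$), and whether the strict inequality $r+s>0$ excludes the trivial solution. These edge cases affect the count by at most a bounded amount and must be checked so that they fall within the allowed error of $1$; the fact that the tolerance in the statement is exactly $1$ rather than something smaller is what gives room to absorb all such boundary discrepancies without a more delicate analysis. Once the endpoint bookkeeping is verified to stay within this slack, the proof is complete.
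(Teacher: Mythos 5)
Your proposal is correct and follows essentially the same route as the paper: reduce to counting integers $s$ in the interval $\left[\frac{(k-1)p}{q-1},\frac{kp}{q}\right]$, whose length is exactly $\frac{q-k}{q-1}\cdot\frac{p}{q}$, and note the count differs from the length by at most $1$. (Your parenthetical ``strictly less than $1$'' is slightly off --- when both endpoints are integers the discrepancy is exactly $1$ --- but the bound $\leq 1$ you actually invoke is what the lemma requires, so the argument stands.)
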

\begin{proof}  Fix $p$ and $q$.  We want to count the number of non-negative integer solutions $(r,s)$ such that $r+qs=kp$ and $r+s\leq p$ where $1\leq k\leq q$.  Notice that $r=kp-qs$, so $r$ is an integer whenever $s$ is an integer.  Further notice that the two lines $r+qs=kp$ and $r+s = p$ intersect at the point $\left( \frac{p(q-k)}{q-1},\frac{(k-1)p}{q-1}\right)$.  Projecting onto the $s$ coordinate, we observe that $N_k(p,q)$ is equal to the number of integers $s$ such that $\frac{(k-1)p}{q-1} \leq s \leq \frac{kp}{q}$.  Thus $N_k(\Gamma(p,q))$ is within 1 of $\left \lfloor \frac{kp}{q} - \frac{(k-1)p}{q-1} \right \rfloor=\left \lfloor \frac{q-k}{q-1}\cdot\frac{p}{q} \right \rfloor$.
\end{proof}
\begin{remark}  For $k=1$ the total number of solutions $N_1(\Gamma(p,q))$ is $\left \lfloor \frac{p}{q}\right \rfloor+1$.\\
For $k=q$ we have $$ N_q(\Gamma(p,q))=1.$$
\end{remark}
\begin{lemma} \label{lemmaN}  The following inequality holds: $$\left| N(\Gamma(p,q)) - \frac{p}{2} \right| \leq q.$$
\end{lemma}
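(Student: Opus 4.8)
The plan is to write $N(\Gamma(p,q))$ as the sum of the weight-counts $N_k(\Gamma(p,q))$ over all admissible weights and then control the total by summing the per-weight estimate already established in Lemma \ref{lemmaNWt}. Since every invariant monomial appearing in $f_{p,q}$ has a well-defined weight $k \in \{1,\dots,q\}$, and these weight classes partition the terms, we have $N(\Gamma(p,q)) = \sum_{k=1}^{q} N_k(\Gamma(p,q))$.

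The one substantive computation I would carry out first is that the leading approximations in Lemma \ref{lemmaNWt} sum to exactly $p/2$. Indeed,
\[
\sum_{k=1}^{q} \frac{q-k}{q-1}\cdot\frac{p}{q} = \frac{p}{q(q-1)}\sum_{k=1}^{q}(q-k) = \frac{p}{q(q-1)}\cdot\frac{q(q-1)}{2} = \frac{p}{2},
\]
using $\sum_{k=1}^{q}(q-k) = 0 + 1 + \cdots + (q-1) = \binom{q}{2}$. This exact cancellation is what makes the target bound clean, and it is the only genuine observation in the argument.

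With this identity in hand, the bound follows from the triangle inequality:
\[
\left| N(\Gamma(p,q)) - \frac{p}{2}\right| = \left| \sum_{k=1}^{q}\left(N_k(\Gamma(p,q)) - \frac{q-k}{q-1}\cdot\frac{p}{q}\right)\right| \leq \sum_{k=1}^{q}\left| N_k(\Gamma(p,q)) - \frac{q-k}{q-1}\cdot\frac{p}{q}\right| \leq \sum_{k=1}^{q} 1 = q,
\]
where the final step is exactly Lemma \ref{lemmaNWt} applied termwise. There is essentially no obstacle here beyond bookkeeping: the content lies entirely in recognizing that the $q$ leading terms telescope to $p/2$, after which the error accumulates at most one unit per weight. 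If a sharper constant were desired, the only available gain would come from observing that the $k=q$ term contributes $N_q(\Gamma(p,q)) = 1$ against a vanishing main term, but since the stated bound is already $\leq q$ such a refinement is unnecessary.
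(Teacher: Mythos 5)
Your proof is correct and follows essentially the same route as the paper: decompose $N(\Gamma(p,q))=\sum_{k=1}^{q}N_k(\Gamma(p,q))$, observe that the main terms from Lemma \ref{lemmaNWt} sum exactly to $\frac{p}{2}$, and accumulate an error of at most $1$ per weight. The triangle-inequality phrasing is a slightly cleaner packaging of the paper's two-sided estimate, but the content is identical.
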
  
\begin{proof}  By Lemma \ref{lemmaNWt}, $$\frac{q-k}{q-1}\cdot\frac{p}{q}-1\leq N_k(\Gamma(p,q))\leq \frac{q-k}{q-1}\cdot\frac{p}{q}+ 1,$$ and by definition $N(\Gamma(p,q))=\sum_{k=1}^{q}{N_k(\Gamma(p,q))}$.  Therefore applying Lemma \ref{lemmaNWt} $q$ times yields $$\sum_{k=1}^{q}{\frac{q-k}{q-1}\cdot\frac{p}{q}}-q\leq N(\Gamma(p,q)) \leq \sum_{k=1}^{q}{\frac{q-k}{q-1}\cdot\frac{p}{q}}+q.$$  Factoring and rearranging the sum gives 
\begin{eqnarray*}
\sum_{k=1}^{q}{\frac{q-k}{q-1}\cdot\frac{p}{q}} & = & \frac{p}{q(q-1)}\sum_{k=1}^{q}{(q-k)}\\
& = & \frac{p}{q(q-1)}(q^2-\frac{q(q+1)}{2}) = \frac{p}{2}.
\end{eqnarray*}
Thus combining the last two calculations gives the result $$\frac{p}{2}-q\leq N(\Gamma(p,q)) \leq \frac{p}{2}+q.$$
\end{proof}

Next we show in the limit that the ratio of the number of terms of odd weight to the total number of terms equals the ratio of the number of terms of even weight to the total number of terms.

\begin{lemma}  \label{lemma2}  The following limit holds: $$\lim_{q\to\infty}{\lim_{p\to \infty}{\frac{N_{odd}(\Gamma(p,q))}{N(\Gamma(p,q))}}}=\lim_{q\to\infty}{\lim_{p\to \infty}{\frac{N_{even}(\Gamma(p,q))}{N(\Gamma(p,q))}}}=\frac{1}{2}$$
\end{lemma}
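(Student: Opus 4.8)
The plan is to reduce everything to the two estimates already established, namely Lemma \ref{lemmaNWt} bounding each $N_k(\Gamma(p,q))$ and Lemma \ref{lemmaN} bounding the total $N(\Gamma(p,q))$, and then to carry out an explicit finite summation over the weights $k$. The first observation is that every term of $f_{p,q}$ has a well-defined weight $k \in \{1,\dots,q\}$, so that
$$N_{odd}(\Gamma(p,q)) = \sum_{\substack{k=1 \\ k \text{ odd}}}^{q} N_k(\Gamma(p,q)), \qquad N_{even}(\Gamma(p,q)) = \sum_{\substack{k=1 \\ k \text{ even}}}^{q} N_k(\Gamma(p,q)),$$
and in particular $N_{odd} + N_{even} = N$.

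First I would fix $q$ and send $p \to \infty$. By Lemma \ref{lemmaNWt} we have $N_k(\Gamma(p,q)) = \frac{q-k}{q-1}\cdot\frac{p}{q} + O(1)$, where the error is bounded uniformly in $p$; summing over the at most $q$ relevant values of $k$ shows that $N_{odd}$ differs from $\sum_{k \text{ odd}} \frac{q-k}{q-1}\cdot\frac{p}{q}$ by a quantity bounded in $p$. Dividing by $N(\Gamma(p,q))$, which by Lemma \ref{lemmaN} equals $\frac{p}{2} + O(q)$ and hence grows linearly in $p$, the bounded errors become negligible, and
$$\lim_{p\to\infty} \frac{N_{odd}(\Gamma(p,q))}{N(\Gamma(p,q))} = \sum_{\substack{k=1 \\ k \text{ odd}}}^{q} \frac{2(q-k)}{q(q-1)},$$
with the analogous identity for the even weights. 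Thus the inner limit exists for each fixed $q$ and is a rational function of $q$.

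Next I would evaluate this finite sum, splitting into the cases $q$ even and $q$ odd according to which values of $k$ occur. A direct computation of $\sum_{k \text{ odd}}(q-k)$ gives $\frac{q^2}{4}$ when $q$ is even and $\frac{q^2-1}{4}$ when $q$ is odd, so that the inner limit equals $\frac{q}{2(q-1)}$ for even $q$ and $\frac{q+1}{2q}$ for odd $q$; the even-weight version is then obtained by subtracting from $1$. Finally, letting $q \to \infty$, both rational expressions tend to $\frac{1}{2}$, and the same holds for the even-weight ratio, which proves the claim.

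The computations here are elementary, so I do not expect a genuine obstacle; the only point requiring care is the justification that the bounded per-weight errors from Lemma \ref{lemmaNWt}, accumulated over the $q$ weights, are swamped by the linear growth of $N(\Gamma(p,q))$ when we divide and let $p \to \infty$. Keeping track of the order of the two limits (first $p$, then $q$) is precisely what makes the interchange of limit and finite sum legitimate.
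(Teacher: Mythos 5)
Your proposal is correct and follows essentially the same route as the paper: sum the per-weight estimates of Lemma \ref{lemmaNWt} over the odd (resp.\ even) weights, divide by the total count from Lemma \ref{lemmaN}, let $p\to\infty$ to obtain the rational expressions $\frac{q+1}{2q}$ and $\frac{q}{2(q-1)}$, and then let $q\to\infty$. The only cosmetic difference is that you organize the computation by the parity of $q$, whereas the paper splits into four cases according to $q \bmod 4$ (which collapse to the same two answers).
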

\begin{proof}  There are four similar cases depending on the residue of $q$ modulo 4.  We consider the case where $q=4r$.  Recall $$N_{odd}(\Gamma(p,q))=\sum_{k=1}^{2r}{N_{2k-1}(\Gamma(p,q))}.$$  By Lemma \ref{lemmaNWt} 
\begin{eqnarray*}
\sum_{k=1}^{2r}{\frac{q-(2k-1)}{q-1}\cdot\frac{p}{q}}-2r & \leq & N_{odd} \leq \sum_{k=1}^{2r}{\frac{q-(2k-1)}{q-1}\cdot\frac{p}{q}}+2r.
\end{eqnarray*}
Rearranging and simplifying yields
\begin{eqnarray*}
\frac{p}{q-1}\cdot\frac{q}{4}-\frac{q}{2} & \leq & N_{odd} \leq \frac{p}{q-1}\cdot\frac{q}{4}+\frac{q}{2}.
\end{eqnarray*}
Next apply Lemma \ref{lemmaN} to get
\begin{eqnarray*}
\frac{\frac{p}{q-1}\cdot\frac{q}{4}-\frac{q}{2}}{\frac{p}{2}+\frac{q}{2}} & \leq & \frac{N_{odd}}{N} \leq \frac{\frac{p}{q-1}\cdot\frac{q}{4}+\frac{q}{2}}{\frac{p}{2}-\frac{q}{2}}\\
\frac{pq - 2q(q-1)}{2(p+q)(q-1)} &\leq& \frac{N_{odd}}{N} \leq \frac{pq + 2q(q-1)}{2(p-q)(q-1)}.
\end{eqnarray*}
Take limit as $p \to \infty$
\begin{eqnarray*}
\frac{q}{2(q-1)} &\leq& \lim_{p\to \infty}{\frac{N_{odd}}{N}} \leq \frac{q}{2(q-1)}.
\end{eqnarray*}
Thus 
\begin{eqnarray*}
\lim_{p\to \infty}{\frac{N_{odd}}{N}} = \frac{q}{2(q-1)}.
\end{eqnarray*}  

We proceed similarly for the even case.  First $$N_{even}(\Gamma(p,q))=\sum_{k=1}^{2r}{N_{2k}(\Gamma(p,q))}.$$  Again we apply Lemma \ref{lemmaNWt} to get 
\begin{eqnarray*}
\sum_{k=1}^{2r}{\frac{q-(2k)}{q-1}\cdot\frac{p}{q}}-2r & \leq & N_{even} \leq \sum_{k=1}^{2r}{\frac{q-(2k)}{q-1}\cdot\frac{p}{q}}+2r.
\end{eqnarray*}
Rearranging and simplifying yields
\begin{eqnarray*}
\frac{p}{q-1}\cdot\frac{q-2}{4}-\frac{q}{2} & \leq & N_{even} \leq \frac{p}{q-1}\cdot\frac{q-2}{4}+\frac{q}{2}.
\end{eqnarray*}
Next apply Lemma \ref{lemmaN} to get
\begin{eqnarray*}
\frac{\frac{p}{q-1}\cdot\frac{q-2}{4}-\frac{q}{2}}{\frac{p}{2}+\frac{q}{2}} & \leq & \frac{N_{even}}{N} \leq \frac{\frac{p}{q-1}\cdot\frac{q-2}{4}+\frac{q}{2}}{\frac{p}{2}-\frac{q}{2}}\\
\frac{p(q-2) - 2q(q-1)}{2(p+q)(q-1)} &\leq& \frac{N_{even}}{N} \leq \frac{p(q-2) + 2q(q-1)}{2(p-q)(q-1)}.
\end{eqnarray*}
Take limit as $p \to \infty$
\begin{eqnarray*}
\frac{q-2}{2(q-1)} &\leq& \lim_{p\to \infty}{\frac{N_{even}}{N}} \leq \frac{q-2}{2(q-1)}.
\end{eqnarray*}
Thus 
\begin{eqnarray*}
\lim_{p\to \infty}{\frac{N_{odd}}{N}} = \frac{q-2}{2(q-1)}.
\end{eqnarray*} 
While we have shown only one case, the others are similar; in Table \ref{EvenOddTable} we summarize the other cases.

\begin{table}[ht]
\caption{Summary of other cases.}
{\begin{tabular}{|c|c|c|}
\hline
$q$ & $\lim_{p}{\frac{N_{even}}{N}}$ & $\lim_{p}{\frac{N_{odd}}{N}}$\\
\hline
$0 \pmod{4}$ & $\frac{q-2}{2(q-1)}$& $\frac{q}{2(q-1)}$\\
$1 \pmod{4}$ & $\frac{q-1}{2q}$& $\frac{q+1}{2q}$\\
$2 \pmod{4}$ & $\frac{q-2}{2(q-1)}$& $\frac{q}{2(q-1)}$\\
$3 \pmod{4}$ & $\frac{q-1}{2q}$& $\frac{q+1}{2q}$\\
\hline
\end{tabular}}
\label{EvenOddTable}
\end{table}

Taking the limit as $q$ goes to infinity in all cases gives the desired result $$\lim_{q\to \infty}{\lim_{p\to \infty}{\frac{N_{odd}}{N}}} = \frac{1}{2}.$$
Hence the limit the ratio of the number of even weight terms to the total number of terms equals the limit of the ratio of the number of odd weight terms to the total number of terms.
\end{proof}

So far we have ignored the sign of coefficients.  Using our notion of weight, we restate a theorem from \cite{LWW}.
\begin{theorem}{(Loehr, Warrington, Wilf  \cite{LWW})} \label{LWW} The coefficient $c_{r,s}$ of the weight $w$ monomial $x^r y^s$ in $f_{p,q}$ is positive when $\gcd \left( r,s,w \right)$ is odd, the coefficient is negative when $\gcd \left( r,s,w \right)$ is even.
\end{theorem}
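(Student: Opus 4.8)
The plan is to obtain this directly from the theorem of Loehr, Warrington, and Wilf \cite{LWW}, since the statement above is their result expressed in the language of weights; the mathematical content is entirely theirs. First I would recall their setup. Writing $P$ for the cyclic shift on $\mathbb{Z}/p\mathbb{Z}$, whose eigenvalues are the powers $\omega^j$, we have $\prod_{j=0}^{p-1}(1-\omega^j x-\omega^{qj}y)=\det(I-xP-yP^q)$, so the coefficients of $f_{p,q}$ are, up to the leading $1-(\cdot)$, the coefficients of a three-line circulant determinant. LWW expand this determinant as a signed enumeration and prove that, after all cancellations, the sign of the coefficient of $x^r y^s$ is governed by the parity of the greatest common divisor of $r$, $s$, and one further index built from $r$, $s$, $p$, and $q$.

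The key step is then to recognize this further index as the weight. By the definition of weight with respect to $\Gamma(p,q)$, the monomial $x^r y^s$ has weight $w$ exactly when $r+qs=wp$, that is $w=(r+qs)/p$. Hence LWW's index is literally $w$, and their criterion becomes the statement above verbatim. The only side condition to verify is that $x^r y^s$ genuinely appears in $f_{p,q}$: since $f_{p,q}$ is $\Gamma(p,q)$-invariant of degree at most $p$, every surviving monomial satisfies $r+qs\equiv 0\pmod p$ with $1\le w\le q$, which is exactly the range LWW enumerate.

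For later use it is convenient to record the elementary reformulation that $\gcd(r,s,w)$ is even if and only if $r$, $s$, and $w$ are \emph{all} even, because $2\mid\gcd(r,s,w)$ is equivalent to $2$ dividing each of $r$, $s$, and $w$ simultaneously. Thus the theorem asserts that $c_{r,s}<0$ precisely when $r$, $s$, and $w$ are all even, and $c_{r,s}>0$ otherwise. A quick check against Table~1 confirms that the weight, and not merely $\gcd(r,s)$, is essential: in $f_{6,4}$ the term $3x^2y^4$ has $w=(2+16)/6=3$, so although $\gcd(2,4)=2$ we have $\gcd(2,4,3)=1$, matching the positive coefficient.

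Should a self-contained argument be preferred instead of citing \cite{LWW}, I would expand $\det(I-xP-yP^q)$ over permutations built from shifts of step $1$ and step $q$ on the cycle $\mathbb{Z}/p\mathbb{Z}$, group the contributions by their underlying necklace structure, and show that the accumulated permutation sign together with the factor $(-1)^{r+s}$ depends only on the parities of $r$, $s$, and $w$. The hard part, and the reason I would rather invoke \cite{LWW}, is precisely this cancellation analysis: one must prove that the many signed terms do not conspire to produce a coefficient whose sign differs from the one predicted by $\gcd(r,s,w)$, which is the combinatorial heart of their proof.
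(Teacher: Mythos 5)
Your proposal matches the paper exactly in substance: the paper gives no proof of this statement at all, simply importing it from \cite{LWW} and restating their sign criterion with their auxiliary index rewritten as the weight $w=(r+qs)/p$, which is precisely the translation you carry out (and your sanity check against $f_{6,4}$ is correct). The additional remarks --- that $\gcd(r,s,w)$ is even iff $r$, $s$, $w$ are all even, and the sketch of the circulant-determinant expansion one would need for a self-contained argument --- are accurate but go beyond what the paper does.
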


\begin{corollary}  \label{Cor1} The odd weight terms in $f_{p,q}$ are all positive, and the even weight terms in $f_{p,q}$ alternate sign.
\end{corollary}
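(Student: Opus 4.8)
The plan is to read off both assertions directly from the gcd-parity criterion of Theorem \ref{LWW}, the only real work being some parity bookkeeping with the weight relation $r+qs=wp$.

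First I would dispose of the odd-weight claim, which is immediate. Fix a monomial $x^r y^s$ of weight $w$, so that $r+qs=wp$. Since $\gcd(r,s,w)$ divides $w$, if $w$ is odd then every divisor of $w$ is odd; in particular $\gcd(r,s,w)$ is odd, and Theorem \ref{LWW} gives $c_{r,s}>0$. No further computation is needed.

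For the even-weight claim the key is to reduce the three-variable gcd condition to a single parity. When $w$ is even, $2\mid w$ automatically, so $\gcd(r,s,w)$ is even if and only if $2\mid r$ and $2\mid s$, and is odd otherwise. I would then feed in the weight relation: since $w$ is even, $wp$ is even, hence $r+qs$ is even and $r\equiv qs \pmod 2$. If $q$ is even this forces $r$ even, so ``$r$ and $s$ both even'' is equivalent to ``$s$ even''; if $q$ is odd then $r\equiv s\pmod 2$, so again ``both even'' is equivalent to ``$s$ even''. In either case the sign rule for even weight collapses to the clean dichotomy $c_{r,s}<0$ when $s$ is even and $c_{r,s}>0$ when $s$ is odd.

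Finally I would invoke the lattice-point description from the proof of Lemma \ref{lemmaNWt}: the monomials of a fixed even weight $w$ are exactly the $x^{wp-qs}y^{s}$ with $s$ ranging over the consecutive integers in $\left[\frac{(w-1)p}{q-1},\frac{wp}{q}\right]$. Listing them by increasing $s$, successive terms have $s$-values differing by $1$, so by the previous paragraph their signs strictly alternate (negative at even $s$, positive at odd $s$), which is the assertion. I do not expect a genuine obstacle here; the only points requiring care are the parity reduction of the gcd condition, which must be verified in both the $q$ even and $q$ odd cases, and the observation that each fixed weight indexes a contiguous block of $s$-values rather than a sparse set, so that ``alternate'' is meaningful. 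As a byproduct, this alternation shows the positive and negative even-weight terms differ in number by at most one per weight, which is exactly what the later asymptotic count of $N^{+}$ and $N^{-}$ will exploit.
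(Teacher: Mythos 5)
Your proposal is correct and follows essentially the same route as the paper: the odd-weight case is read off from the fact that $\gcd(r,s,w)$ divides $w$, and the even-weight case reduces the gcd-parity to the parity of $s$ via $r=wp-qs$ and then uses the consecutiveness of the admissible $s$-values for a fixed weight. Your extra case split on the parity of $q$ is harmless but unnecessary, since for $w$ and $s$ both even one gets $r=wp-qs$ even regardless of $q$, which is exactly the one-line observation the paper makes.
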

\begin{proof}  For odd weight terms $\gcd \left( r,s,w \right)$ is always odd, thus by Theorem \ref{LWW}, the coefficients are all positive.

When $w$ is even, $\gcd \left( r,s,w \right)$ is even whenever both $r$ and $s$ are even.  But $r=wp-qs$, so $r$ is even if $s$ and $w$ are even.  Finally for fixed weight $w$ the possible $s$-values are consecutive integers.  Hence for $w$ even, $s$ will alternate between even and odd values, thereby making $\gcd \left( r,s,w \right)$ alternate between even and odd values.  By Theorem \ref{LWW} the terms of odd weight in $f_{p,q}$ will alternate signs.
\end{proof}

For ease of notation we make the following definition.

\begin{definition}  Let $T(q)$ denote the asymptotic positivity ratio for $\Gamma(p,q)$, then $$T(q)=\lim_{p\to \infty}{L(\Gamma(p,q))}.$$
\end{definition}

The sequence $T(q)$ is of interest.  We list the first few terms:

$$T(q)=\left( 1, \; 1, \; \frac{5}{6}, \; \frac{5}{6}, \; \frac{4}{5}, \; \frac{4}{5}, \; \frac{11}{14}, \; \frac{11}{14}, \; \frac{7}{9}, \cdots \right).$$

In Corollary \ref{c:apvp} we show that the sequence $T(q)$ is monotone non-increasing, and each value repeats twice.

Now we combine Theorem \ref{LWW} with our previous estimates on the number of terms of each weight to compute the asymptotic positivity ratio.
\begin{proposition}  \label{lemmaSq}  The limit in the definition of the asymptotic positivity ratio exists, and
\begin{equation}\label{e:rational}T(q)=\begin{cases}
\frac{3q+1}{4q} & \text{if $q$ is odd},\\
\frac{3q-2}{4(q-1)} & \text{if $q$ is even}.
\end{cases}\end{equation}
\end{proposition}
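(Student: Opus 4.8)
The plan is to split the count of positive eigenvalues according to the parity of the weight and then pass to the limit using the asymptotic ratios already computed in the proof of Lemma \ref{lemma2}. By Corollary \ref{Cor1}, every odd-weight monomial of $f_{p,q}$ has a positive coefficient, so these contribute exactly $N_{odd}(\Gamma(p,q))$ to $N^{+}(\Gamma(p,q))$. Writing $N_{even}^{+}$ for the number of positive even-weight coefficients, we then have $N^{+}(\Gamma(p,q)) = N_{odd}(\Gamma(p,q)) + N_{even}^{+}$. The first task is therefore to control $N_{even}^{+}$, and the key point is that it differs from $\tfrac{1}{2}N_{even}(\Gamma(p,q))$ by an amount bounded independently of $p$.

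To see this, I would fix an even weight $w$ and recall from the proof of Corollary \ref{Cor1} that, as $s$ runs through the consecutive integers producing weight-$w$ monomials, the signs of the coefficients strictly alternate. Hence among the $N_{w}(\Gamma(p,q))$ monomials of that weight the number of positive ones differs from $\tfrac{1}{2}N_{w}(\Gamma(p,q))$ by at most $\tfrac{1}{2}$. Since there are at most $q$ even weights in $\{1,\dots,q\}$, summing gives $|N_{even}^{+} - \tfrac{1}{2}N_{even}(\Gamma(p,q))| \le q$, a bound that does not grow with $p$.

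Combining the two paragraphs, $N^{+}(\Gamma(p,q)) = N_{odd}(\Gamma(p,q)) + \tfrac{1}{2}N_{even}(\Gamma(p,q)) + E_p$ with $|E_p| \le q$. Dividing by $N(\Gamma(p,q))$ and using $N(\Gamma(p,q)) \to \infty$ (Lemma \ref{lemmaN}), the error term $E_p/N(\Gamma(p,q))$ vanishes as $p \to \infty$, so
\[
T(q) = \lim_{p\to\infty}\frac{N_{odd}(\Gamma(p,q))}{N(\Gamma(p,q))} + \frac{1}{2}\lim_{p\to\infty}\frac{N_{even}(\Gamma(p,q))}{N(\Gamma(p,q))}.
\]
In particular the limit exists, since both limits on the right exist by Lemma \ref{lemma2}. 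It then remains only to substitute the values tabulated in Table \ref{EvenOddTable}: for $q$ odd the two limits are $\tfrac{q+1}{2q}$ and $\tfrac{q-1}{2q}$, giving $T(q) = \tfrac{q+1}{2q} + \tfrac{q-1}{4q} = \tfrac{3q+1}{4q}$; for $q$ even they are $\tfrac{q}{2(q-1)}$ and $\tfrac{q-2}{2(q-1)}$, giving $T(q) = \tfrac{q}{2(q-1)} + \tfrac{q-2}{4(q-1)} = \tfrac{3q-2}{4(q-1)}$.

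The only real obstacle is the uniformity in the second paragraph: one must make sure the alternation of signs provided by Corollary \ref{Cor1} really holds within each individual even-weight class, so that the per-weight discrepancy is bounded by the absolute constant $\tfrac{1}{2}$ and the total error $E_p$ stays bounded as $p \to \infty$. Once this bookkeeping is in place, the remainder is a direct substitution of the ratios already established.
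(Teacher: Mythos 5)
Your argument is correct and follows essentially the same route as the paper: split $N^{+}$ by weight parity, use Corollary \ref{Cor1} to get $N_{odd}=N_{odd}^{+}$ and the alternation of signs within each even weight class, and then substitute the limits from Table \ref{EvenOddTable}. The only difference is that you make explicit the bounded-error bookkeeping behind the step $\lim_{p}N_{even}^{+}/N=\tfrac{1}{2}\lim_{p}N_{even}/N$, which the paper asserts more briefly; this is a welcome clarification rather than a deviation.
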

\begin{proof}  First consider the even and odd weights separately.  
\begin{eqnarray}
T(q)=\lim_{p\to \infty}{\frac{N^{+}(\Gamma(p,q))}{N(\Gamma(p,q))}} & = & \lim_{p\to \infty}{\frac{N_{odd}^{+}(\Gamma(p,q))+N_{even}^{+}(\Gamma(p,q))}{N(\Gamma(p,q))}}\\
& = &  \lim_{p\to \infty}{\frac{N_{odd}^{+}(\Gamma(p,q))}{N(\Gamma(p,q))}}+\lim_{p\to \infty}{\frac{N_{even}^{+}(\Gamma(p,q))}{N(\Gamma(p,q))}}. \label{eq19}
\end{eqnarray}
By Corollary \ref{Cor1} all odd weight terms are positive and the even weight terms alternate in sign.  Hence $$N_{odd}=N_{odd}^{+},$$ and  
\begin{eqnarray}{\lim_{p\to \infty}{\frac{N_{even}^{+}(\Gamma(p,q))}{N(\Gamma(p,q))}}}=\frac{1}{2}\cdot{\lim_{p\to \infty}{\frac{N_{even}(\Gamma(p,q))}{N(\Gamma(p,q))}}}. \label{eq20}
\end{eqnarray}

When $q$ is even, combining equations \ref{eq19} and \ref{eq20} with table \ref{EvenOddTable} in Lemma \ref{lemma2} yields $$T(q)=\lim_{p\to \infty}{L(\Gamma(p,q))}=\frac{q}{2(q-1)}+\frac{1}{2}\cdot\frac{q-2}{2(q-1)}=\frac{3q-2}{4(q-1)}.$$

Similarly when $q$ is odd, we have $$T(q)=\lim_{p\to \infty}{L(\Gamma(p,q))}=\frac{q+1}{2q}+\frac{1}{2}\cdot\frac{q-1}{2q}=\frac{3q+1}{4q}.$$
\end{proof}

\begin{corollary} \label{c:apvp}
\mbox{}\\
\noindent (i)  The sequence $T(q)$ is monotone.\\
\noindent (ii)  The limit as $q$ goes to infinity of $T(q)$ exists.\\
\noindent (iii)  $T(2r-1)=T(2r)$ for all $r\in \mathbb{Z}^{+}$.\\
\end{corollary}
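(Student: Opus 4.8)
The plan is to derive all three claims directly from the explicit closed form for $T(q)$ established in Proposition \ref{lemmaSq}, namely $T(q) = \frac{3q+1}{4q}$ when $q$ is odd and $T(q) = \frac{3q-2}{4(q-1)}$ when $q$ is even. I would prove part (iii) first, because it exhibits the sequence as a string of equal consecutive pairs and thereby reduces the monotonicity question to a single-variable computation. For (iii), substitute $q = 2r-1$ into the odd-case formula and $q = 2r$ into the even-case formula:
$$T(2r-1) = \frac{3(2r-1)+1}{4(2r-1)} = \frac{6r-2}{4(2r-1)} = \frac{3r-1}{2(2r-1)},$$
$$T(2r) = \frac{3(2r)-2}{4(2r-1)} = \frac{6r-2}{4(2r-1)} = \frac{3r-1}{2(2r-1)}.$$
Both reduce to the same value, so $T(2r-1) = T(2r)$ for every $r \in \mathbb{Z}^{+}$, which is (iii).

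For part (i), part (iii) lets me reduce monotonicity of the full sequence to monotonicity of the pair-value function $g(r) := T(2r-1) = \frac{3r-1}{2(2r-1)}$. I would compute the consecutive difference
$$g(r) - g(r+1) = \frac{3r-1}{2(2r-1)} - \frac{3r+2}{2(2r+1)} = \frac{(3r-1)(2r+1) - (3r+2)(2r-1)}{2(2r-1)(2r+1)}.$$
A short expansion shows the numerator equals $1$ (independent of $r$), so $g(r) - g(r+1) = \frac{1}{2(2r-1)(2r+1)} > 0$ for all $r \geq 1$. Hence $g$ is strictly decreasing; since $T$ is constant on each pair $\{2r-1, 2r\}$ by (iii), the sequence $T(q)$ is monotone non-increasing, which gives (i).

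For part (ii), the cleanest route is to take $q \to \infty$ in each rational formula directly: both $\frac{3q+1}{4q}$ and $\frac{3q-2}{4(q-1)}$ tend to $\frac{3}{4}$, so the odd-indexed and even-indexed subsequences share the limit $\frac{3}{4}$ and the full sequence therefore converges to $\frac{3}{4}$, consistent with Theorem \ref{mt:cyclic}. Alternatively, monotonicity from (i) together with the trivial lower bound $T(q) > 0$ yields existence of the limit by the monotone convergence theorem; I would record both observations, since the first also pins down the value.

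I do not anticipate a genuine obstacle here. Once Proposition \ref{lemmaSq} supplies the closed form, all three parts are elementary algebra, and the only point demanding care is the bookkeeping in (iii) — verifying that the odd and even formulas coincide on adjacent indices — which is precisely what makes the monotonicity and convergence arguments in (i) and (ii) transparent.
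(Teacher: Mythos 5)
Your proof is correct and takes the same route the paper intends: the paper states this corollary without proof as an immediate consequence of the closed form in Proposition \ref{lemmaSq}, and your algebra (the pair identity in (iii), the telescoping difference $g(r)-g(r+1)=\frac{1}{2(2r-1)(2r+1)}>0$ for (i), and the common limit $\frac{3}{4}$ for (ii)) correctly supplies the omitted verification, consistent with the listed values $1,1,\frac{5}{6},\frac{5}{6},\frac{4}{5},\frac{4}{5},\dots$
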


Now taking the limit as $q$ goes to infinity in Proposition \ref{lemmaSq} gives one of our main results.

\begin{theorem}  Let $T(q)$ denote the asymptotic positivity ratio of $\Gamma(p,q)$, then
$$\lim_{q\to \infty}{T(q)}=\frac{3}{4}.$$
\end{theorem}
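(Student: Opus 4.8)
The plan is to invoke the explicit rational formula for $T(q)$ already established in Proposition \ref{lemmaSq} and simply take the limit as $q \to \infty$, treating the even and odd cases separately. Since Proposition \ref{lemmaSq} gives $T(q) = \frac{3q+1}{4q}$ for odd $q$ and $T(q) = \frac{3q-2}{4(q-1)}$ for even $q$, the entire sequence $T(q)$ splits into two interleaved subsequences, each given by a ratio of linear polynomials in $q$ with leading coefficients $3$ and $4$ respectively.

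First I would compute the limit along the odd values: dividing numerator and denominator by $q$ gives $\frac{3q+1}{4q} = \frac{3 + 1/q}{4} \to \frac{3}{4}$ as $q \to \infty$. Next I would compute the limit along the even values: writing $\frac{3q-2}{4(q-1)} = \frac{3 - 2/q}{4 - 4/q} \to \frac{3}{4}$ as $q \to \infty$. Since both subsequences (odd $q$ and even $q$) converge to the common value $\frac{3}{4}$, and every positive integer is either even or odd, the full sequence $T(q)$ converges to $\frac{3}{4}$.

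There is essentially no obstacle here: the genuine content of the theorem lies entirely in Proposition \ref{lemmaSq}, whose proof in turn rests on the counting estimates of Lemmas \ref{lemmaNWt}, \ref{lemmaN}, and \ref{lemma2} together with the sign determination from Corollary \ref{Cor1}. Given those results, the present statement is a routine limit computation. The only point requiring a word of care is that the sequence $T(q)$ is not defined by a single formula, so one must verify that the two subsequential limits agree before concluding the limit of the full sequence exists; this is immediate once both are seen to equal $\frac{3}{4}$. Alternatively, one could note via Corollary \ref{c:apvp} that $T(q)$ is monotone non-increasing and bounded below (by $0$), hence convergent, and then identify the limit using either closed form.
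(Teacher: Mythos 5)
Your proposal is correct and follows exactly the paper's route: the paper proves this theorem simply by taking the limit $q\to\infty$ of the rational expressions for $T(q)$ established in Proposition \ref{lemmaSq}, which is precisely your computation. Your extra remark about checking that the odd and even subsequential limits agree is a minor (and sound) point of care that the paper leaves implicit.
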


\section{Dihedral Group}
The analysis can be extended to other groups in $U(2)$; for example, in this section, we define families of unitary representations of dihedral groups, and we determine the asymptotic positivity ratios to be $\frac{1}{2}$.

Let $D_p$ denote the dihedral group with $2p$ elements; namely, $$D_p:=\left< a, b \; | \; a^p=b^2=1, bab=a^{-1} \right>.$$  Without loss of generality, let $\iota : D_p \to U(2)$ be the faithful representation generated by 
\begin{eqnarray*}
\iota(a) & = & 
\begin{pmatrix} 
\omega & 0\\ 
0 & \omega^{-1}
\end{pmatrix}\\
\iota(b) & = & 
\begin{pmatrix} 
0 & 1\\ 
1 & 0
\end{pmatrix}.\\
\end{eqnarray*}  Here the $a$ corresponds to rotation, and the $b$ corresponds to reflection.  Let $$\Delta_p = \iota(D_p).$$

Before stating the main results of this section we begin with an example.  We compute the number of positive and negative eigenvalues of $\Phi_{\Delta_3}(z, \bar{z})$.  Expanding the product in the definition we get 
\begin{eqnarray*}
\lefteqn{\Phi_{\Delta_3} =}\\
& & z_1^3 \bar{z_1}^3+z_2^3\bar{z_1}^3-z_1^3 z_2^3 \bar{z_1}^6+6z_1 z_2 \bar{z_1} \bar{z_2} - 3z_1^4 z_2 \bar{z_1}^4 \bar{z_2} - 3z_1 z_2^4 \bar{z_1}^4 \bar{z_2}-9z_1^2 z_2^2 \bar{z_1}^2 \bar{z_2}^2 \\
& &+ z_1^3 \bar{z_2}^3+z_2^3\bar{z_2}^3-z_1^6\bar{z_1}^3 \bar{z_2}^3-z_2^6 \bar{z_1}^3 \bar{z_2}^3 - 3 z_1^4 z_2 \bar{z_1} \bar{z_2}^4 -3 z_1 z_2^4 \bar{z_1} \bar{z_2}^4 -z_1^3 z_2^3 \bar{z_2}^6.\\
\end{eqnarray*}
In contrast to the cyclic case we get off-diagonal terms, and hence it is not enough to simply count the number of terms to get the number of eigenvalues.  Rewriting in terms of a polynomials invariant under the $D_3$-action, we get
\begin{eqnarray*}
\lefteqn{\Phi_{\Delta_3} =}\\
& & (z_1^3+z_2^3)(\bar{z_1}^3 +\bar{z_2}^3)- z_1^3 z_2^3 (\bar{z_1}^6+\bar{z_2}^6)- \bar{z_1}^3 \bar{z_2}^3 (z_1^6 +z_2^6)+ 6z_1 z_2 \bar{z_1} \bar{z_2}\\ 
& &-9z_1^2 z_2^2 \bar{z_1}^2 \bar{z_2}^2-3 z_1 z_2 (z_1^3 +z_2^3) \bar{z_1} \bar{z_2} (\bar{z_1}^3 +\bar{z_2}^3).\\
\end{eqnarray*}

Equivalently we get $$\Phi_{\Delta_3} =
\begin{pmatrix}
\bar{z_1}^3 + \bar{z_2}^3\\
\bar{z_1} \bar{z_2} (\bar{z_1}^3 +\bar{z_2}^3)\\
\bar{z_1} \bar{z_2}\\
\bar{z_1}^2 \bar{z_2}^2\\
\bar{z_1}^3 \bar{z_2}^3\\
\bar{z_1}^6+\bar{z_2}^6\\
\end{pmatrix}^{T}
\begin{pmatrix}
1 &  0 & 0 &  0 & 0 & 0\\
0 & -3 & 0 &  0 & 0 & 0\\
0 &  0 & 6 &  0 & 0 & 0\\
0 &  0 & 0 & -9 & 0 & 0\\
0 &  0 & 0 &  0 & 0 & -1\\
0 &  0 & 0 &  0 & -1 & 0\\
\end{pmatrix}
\begin{pmatrix}
z_1^3 + z_2^3\\
z_1 z_2 (z_1^3 +z_2^3)\\
z_1 z_2\\
z_1^2 z_2^2\\
z_1^3 z_2^3\\
z_1^6+z_2^6\\
\end{pmatrix}.$$

Hence the eigenvalues of $\Phi_{\Delta_3}$ are 1, -3, 6, -9, 1, -1.  Then $$S(\Delta_3)=(3,3).$$

We proceed along these lines for general $p$. We compute the asymptotic positivity ratio in the following theorem.

\begin{theorem}  Let $\Delta_p$ be a dihedral group of order $2p$ in $U(2)$, then
$$\lim_{p\to \infty}{L(\Delta_p)}=\frac{1}{2}.$$
\end{theorem}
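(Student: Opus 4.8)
The plan is to imitate the analysis of the binary dihedral groups $\Lambda_p$ carried out above. The key structural fact is that $\Delta_p$ splits evenly into the $p$ diagonal matrices $\iota(a^j)$ and the $p$ anti-diagonal matrices $\iota(a^j b)$ for $0\le j<p$, where $\omega$ is a primitive $p$-th root of unity. First I would establish the analogue of Proposition \ref{Quaternion}: separating the two families in the product defining $\Phi_{\Delta_p}$ and recognizing each factor as $1-f_{p,p-1}$ evaluated at the appropriate arguments yields
\begin{align*}
\Phi_{\Delta_p} &= f_{p,p-1}(|z_1|^2,|z_2|^2)+f_{p,p-1}(z_2\bar{z_1},z_1\bar{z_2})\\
&\qquad -f_{p,p-1}(|z_1|^2,|z_2|^2)\,f_{p,p-1}(z_2\bar{z_1},z_1\bar{z_2}).
\end{align*}
In contrast to the $\Lambda_p$ case, the reflection $\iota(b)$ carries no sign, so no minus sign enters the second argument.

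Next I would invoke D'Angelo's exact formulas (Theorem \ref{thm:cyclicproposition}) to expose the common diagonal part of the two factors. Writing $t=|z_1 z_2|^2$ and $S=\sum_{j=1}^{\lfloor p/2\rfloor}(-1)^{j-1}c_{p,j}t^j$, the two factors are $A=|z_1|^{2p}+|z_2|^{2p}+S$ and $B=z_2^p\bar{z_1}^p+z_1^p\bar{z_2}^p+S$, so that $A+B=|z_1^p+z_2^p|^2+2S$ while the product $AB$ produces the remaining terms. Regrouping $A+B-AB$ expresses $\Phi_{\Delta_p}$ as $d^{*}Md$ in the basis of $D_p$-invariant polynomials $z_1^p+z_2^p$, the $z_1^j z_2^j(z_1^p+z_2^p)$ for $1\le j\le\lfloor p/2\rfloor$, the $(z_1 z_2)^k$ for $1\le k\le p-1$, and the pair $z_1^p z_2^p,\,z_1^{2p}+z_2^{2p}$, with $M$ block diagonal. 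Indeed the identity $|z_1^p+z_2^p|^2\,S=\sum_j(-1)^{j-1}c_{p,j}\,|z_1^j z_2^j(z_1^p+z_2^p)|^2$ shows that, through the term $-|z_1^p+z_2^p|^2 S$, the eigenvalue attached to $z_1^j z_2^j(z_1^p+z_2^p)$ is $(-1)^j c_{p,j}$; the part $2S-S^2$ supplies the coefficients of the $(z_1 z_2)^k$; and the pair $z_1^p z_2^p,\,z_1^{2p}+z_2^{2p}$ forms a $2\times2$ block $\left(\begin{smallmatrix}\ast & -1\\ -1 & 0\end{smallmatrix}\right)$ of determinant $-1$, contributing one eigenvalue of each sign.

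The crux is the sign pattern along the $(z_1 z_2)^k$ diagonal, i.e. the signs of the coefficients of $2S-S^2$. Setting $\alpha=\tfrac{1+\sqrt{1-4t}}{2}$ and $\beta=\tfrac{1-\sqrt{1-4t}}{2}$, so $\alpha+\beta=1$ and $\alpha\beta=t$, Theorem \ref{thm:cyclicproposition} gives $1-S=\alpha^p+\beta^p$, hence
$$2S-S^2=1-(1-S)^2=1-\alpha^{2p}-\beta^{2p}-2t^p.$$
Applying Theorem \ref{thm:cyclicproposition} to the cyclic group of order $2p$ identifies $1-\alpha^{2p}-\beta^{2p}=\sum_{k=1}^{p}(-1)^{k-1}c_{2p,k}t^k$, so the coefficient of $(z_1 z_2)^k$ is $(-1)^{k-1}c_{2p,k}$ for $1\le k\le p-1$, while the $t^p$ term is absorbed into the $2\times2$ block. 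Because every $c_{p,j}$ and $c_{2p,k}$ is a positive integer, the signs are now immediate: $z_1^j z_2^j(z_1^p+z_2^p)$ gives a positive eigenvalue exactly when $j$ is even, and $(z_1 z_2)^k$ gives a positive eigenvalue exactly when $k$ is odd. This is markedly cleaner than the $\Lambda_p$ case, where the corresponding signs required the polynomial $D_p(t)$ and a root-location argument.

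Counting then gives $N^{+}(\Delta_p)=1+\lfloor p/4\rfloor+\lfloor p/2\rfloor+1$ and $N^{-}(\Delta_p)=(\lfloor p/2\rfloor-\lfloor p/4\rfloor)+\lfloor (p-1)/2\rfloor+1$, which recover the signature pair $S(\Delta_p)=(\lfloor p/2\rfloor+\lfloor p/4\rfloor+2,\lfloor 3(p+1)/4\rfloor)$; one checks the small case $p=3$ against the worked example. Since $N^{+}$ and $N^{-}$ each grow like $\tfrac{3p}{4}$, the total $N\sim\tfrac{3p}{2}$ and $L(\Delta_p)\to\tfrac12$. I expect the main obstacle to be not the sign analysis but the bookkeeping in the regrouping step: verifying that $M$ is genuinely block diagonal, with no stray cross terms linking the $z_1^j z_2^j(z_1^p+z_2^p)$ pieces to the $(z_1 z_2)^k$ pieces or to the final pair. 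This one confirms by comparing the balanced versus unbalanced monomials that can occur in each squared norm, noting that the squares $|z_1^j z_2^j(z_1^p+z_2^p)|^2$ contain no balanced monomial $z_1^a z_2^b\bar{z_1}^a\bar{z_2}^b$ and hence cannot overlap the $(z_1 z_2)^k$ diagonal.
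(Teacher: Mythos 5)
Your proposal is correct and follows the paper's proof in all essentials: the same splitting of $\Delta_p$ into diagonal and anti-diagonal elements giving the decomposition of Theorem \ref{Dihedral}, the same basis of $D_p$-invariant polynomials $z_1^p+z_2^p$, $z_1^jz_2^j(z_1^p+z_2^p)$, $(z_1z_2)^k$, and the pair $z_1^pz_2^p$, $z_1^{2p}+z_2^{2p}$, the same nearly diagonal matrix, and the same eigenvalue count as in Lemma \ref{l:DNumTerms} and Corollary \ref{c:DRatio}. The one step you handle genuinely differently is the sign of the coefficient of $(z_1z_2\bar{z_1}\bar{z_2})^k$: the paper expands $2B_p-B_p^2$ directly and observes that every summand of $E_k=\sum_{a+b=k}c_{p,a}c_{p,b}+2c_{p,k}$ is positive while the attached sign is uniformly $(-1)^{k+1}$, whereas you use the closed form $1-S=\alpha^p+\beta^p$ from Theorem \ref{thm:cyclicproposition} to get $2S-S^2=1-\alpha^{2p}-\beta^{2p}-2t^p=\sum_{k=1}^{p}(-1)^{k-1}c_{2p,k}t^k-2t^p$. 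Both are valid; yours is slightly slicker and yields the extra identity $E_k=c_{2p,k}$ for $k\le p-1$ (checked for $p=3$: $E_1=6=c_{6,1}$, $E_2=9=c_{6,2}$), which the paper does not record. The only imprecision is in your final sentence: $|z_1^jz_2^j(z_1^p+z_2^p)|^2$ does contain monomials of the form $z_1^az_2^b\bar{z_1}^a\bar{z_2}^b$, namely with $(a,b)=(j+p,j)$ and $(j,j+p)$; what you need, and what is true, is that none of them has $a=b$, so there is no overlap with the $(z_1z_2\bar{z_1}\bar{z_2})^k$ diagonal. In any case the block structure already follows from your algebraic regrouping together with the linear independence of the invariant polynomials, via Sylvester's law as in Proposition \ref{p:changebasis}.
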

\begin{proof}  In order to prove this theorem we first invoke Theorem \ref{Dihedral} relating $\Phi_{\Delta_p}$ to the more familiar $f_{p,p-1}$.  In Lemma \ref{l:DNumTerms} below we count the number of positive and negative eigenvalues.  Below in Corollary \ref{c:DRatio}, we compute the positivity ratio, and we show that the limit as $p$ goes to infinity exists.  The conclusion of this theorem follows by taking the limit as $p$ goes to infinity in Corollary \ref{c:DRatio}.
\end{proof}

D'Angelo \cite{D4} proves the following result relating $\Phi_{\Delta_p}$ to $f_{p,p-1}$.  The key idea is that the elements of $\Delta_p$ are either diagonal matrices or anti-diagonal matrices, and hence we can consider them separately as $f_{p,p-1}$ evaluated at different points.
\begin{theorem}{({D'Angelo} \cite{D4})}  \label{Dihedral} The invariant polynomial corresponding to the representation $\iota$ satisfies: $$\Phi_{\Delta_p} =  f_{p,p-1}(\left|z_1 \right|^2, \left|z_2 \right|^2)+
f_{p,p-1}(z_2 \bar{z_1}, z_1 \bar{z_2})-f_{p,p-1}(\left|z_1 \right|^2,\left|z_2 \right|^2)f_{p,p-1}(z_2 \bar{z_1}, z_1 \bar{z_2}).$$
\end{theorem}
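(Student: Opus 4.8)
The plan is to follow the proof of Proposition~\ref{Quaternion} essentially verbatim, the only structural input being that $\Delta_p$ splits evenly into diagonal and anti-diagonal matrices. First I would record the group elements explicitly. Since $\iota(a^j)=\mathrm{diag}(\omega^j,\omega^{-j})$ and $\iota(a^j b)=\begin{pmatrix}0 & \omega^j\\ \omega^{-j} & 0\end{pmatrix}$, we have
$$\Delta_p=\left\{\begin{pmatrix}\omega^j & 0\\ 0 & \omega^{-j}\end{pmatrix},\ \begin{pmatrix}0 & \omega^j\\ \omega^{-j} & 0\end{pmatrix}\ |\ j=0,\cdots,p-1\right\},$$
with exactly $p$ matrices of each type, where $\omega$ is a primitive $p$-th root of unity.

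Next I would factor the product defining $\Phi_{\Delta_p}$ into its diagonal and anti-diagonal parts. For the diagonal part, $\langle\mathrm{diag}(\omega^j,\omega^{-j})z,z\rangle=\omega^j|z_1|^2+\omega^{-j}|z_2|^2$; recalling that $q=p-1$ forces $\omega^{qj}=\omega^{-j}$, the definition of $f_{p,p-1}$ gives $\prod_{j=0}^{p-1}\left(1-\omega^j|z_1|^2-\omega^{-j}|z_2|^2\right)=1-f_{p,p-1}(|z_1|^2,|z_2|^2)$. For the anti-diagonal part, applying $\begin{pmatrix}0 & \omega^j\\ \omega^{-j} & 0\end{pmatrix}$ to $z$ and pairing with $z$ yields $\omega^j z_2\bar{z_1}+\omega^{-j}z_1\bar{z_2}$, so the corresponding product equals $1-f_{p,p-1}(z_2\bar{z_1},z_1\bar{z_2})$.

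Combining the two factors gives
$$\Phi_{\Delta_p}=1-\left(1-f_{p,p-1}(|z_1|^2,|z_2|^2)\right)\left(1-f_{p,p-1}(z_2\bar{z_1},z_1\bar{z_2})\right),$$
and expanding the product yields the claimed identity. There is no genuine obstacle here; the work is routine bookkeeping of the inner products together with the definition of $f_{p,p-1}$. The one step I would double-check is the sign inside the anti-diagonal inner product: because $\iota(b)$ carries a $+1$ (rather than $-1$) in its lower-left entry, the second argument of $f_{p,p-1}$ is $+z_1\bar{z_2}$, in contrast to Proposition~\ref{Quaternion}, where the binary dihedral generator produces the argument $-z_1\bar{z_2}$. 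I would also verify that the rotation uses a $p$-th root of unity, so that $f_{p,p-1}$ rather than $f_{2p,2p-1}$ appears, reflecting $|\Delta_p|=2p$ rather than $4p$.
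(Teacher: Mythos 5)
Your proof is correct and matches the approach the paper takes: the theorem is cited to D'Angelo \cite{D4} rather than proved in the text, but the paper's own proof of the binary dihedral analogue (Proposition \ref{Quaternion}) is exactly the diagonal/anti-diagonal factorization you carry out, and your sign check on the anti-diagonal inner product and your use of $f_{p,p-1}$ in place of $f_{2p,2p-1}$ are both right. Nothing further is needed.
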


Unlike the general cyclic case, here we can exactly determine the numbers of positive and negative eigenvalues.

\begin{lemma}  \label{l:DNumTerms}  The total number of eigenvalues is
$$N(\Delta_{p})=p+\left \lfloor \frac{p}{2} \right \rfloor + 2.$$ 
The number of positive eigenvalues is $$N^{+}(\Delta_{p})=\left \lfloor \frac{p}{2} \right \rfloor +\left \lfloor \frac{p}{4} \right \rfloor +2.$$
\end{lemma}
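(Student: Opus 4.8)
The plan is to expand $\Phi_{\Delta_p}$ using Theorem \ref{Dihedral} together with the explicit formula for $f_{p,p-1}$ from Theorem \ref{thm:cyclicproposition}, rewrite the result as a Hermitian form $\Phi_{\Delta_p}=d^{*}Md$ in an explicit basis of $D_p$-invariant holomorphic polynomials, and then read off $N$ and $N^{+}$ from the nearly diagonal structure of $M$, exactly as in the worked example $\Phi_{\Delta_3}$. Throughout I write $t=|z_1z_2|^2$ and set $S=\sum_{j=1}^{\lfloor p/2\rfloor}(-1)^{j-1}c_{p,j}t^{j}$, so that by Theorem \ref{thm:cyclicproposition} we have $f_{p,p-1}(|z_1|^2,|z_2|^2)=|z_1|^{2p}+|z_2|^{2p}+S$ and $f_{p,p-1}(z_2\bar z_1,z_1\bar z_2)=z_2^{p}\bar z_1^{p}+z_1^{p}\bar z_2^{p}+S$; the two cross-term parts coincide because $(z_2\bar z_1)(z_1\bar z_2)=t$.

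First I would multiply out the three pieces of Theorem \ref{Dihedral}. The four pure degree-$p$ terms assemble into $|z_1^{p}+z_2^{p}|^2$; the product $-f_{p,p-1}(|z_1|^2,|z_2|^2)f_{p,p-1}(z_2\bar z_1,z_1\bar z_2)$ produces (i) the coupling $-z_1^{p}z_2^{p}(\bar z_1^{2p}+\bar z_2^{2p})-\bar z_1^{p}\bar z_2^{p}(z_1^{2p}+z_2^{2p})$, (ii) the multiple $-|z_1^{p}+z_2^{p}|^2S=\sum_j(-1)^{j}c_{p,j}\,|z_1^{j}z_2^{j}(z_1^{p}+z_2^{p})|^2$, and (iii) the purely $t$-dependent polynomial $D(t):=2S-S^2$ (the factor $2S$ coming from the two linear terms). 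Collecting, $d$ lists $z_1^{p}+z_2^{p}$, the polynomials $z_1^{j}z_2^{j}(z_1^{p}+z_2^{p})$ for $1\le j\le\lfloor p/2\rfloor$, the $(z_1z_2)^{k}$ for $1\le k\le p$, and $z_1^{2p}+z_2^{2p}$; these are linearly independent, and $M$ is diagonal except for a single $2\times2$ block coupling $(z_1z_2)^{p}$ to $z_1^{2p}+z_2^{2p}$. The entry of $z_1^{p}+z_2^{p}$ is $+1$, the entry of $z_1^{j}z_2^{j}(z_1^{p}+z_2^{p})$ is $(-1)^{j}c_{p,j}$, the entry of $(z_1z_2)^{k}$ is $d_k:=[t^{k}]D(t)$, and the coupling block is $\begin{pmatrix} d_p & -1 \\ -1 & 0 \end{pmatrix}$, whose determinant is $-1$, giving signature $(1,1)$ regardless of $d_p$.

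The one real computation is the sign of $d_k$ for $1\le k\le p-1$, the analogue of Proposition \ref{proposition:dksign}. Here part (1) of Theorem \ref{thm:cyclicproposition} gives $1-S=\alpha^{p}+\beta^{p}$ with $\alpha+\beta=1$ and $\alpha\beta=t$, so $D(t)=1-(1-S)^2=1-(\alpha^{p}+\beta^{p})^2$. Replacing $t$ by $-t$ makes $\alpha,\beta=\tfrac12(1\pm\sigma)$ with $\sigma=\sqrt{1+4t}$, and using $(1-\sigma^2)^{p}=(-4t)^{p}$ one finds $D(-t)=1-\tfrac{2}{4^{p}}R(t)-2(-1)^{p}t^{p}$, where $R(t)=\sum_{j=0}^{p}\binom{2p}{2j}(1+4t)^{j}$. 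Since $R$ manifestly has strictly positive coefficients, for $1\le k\le p-1$ the $t^{p}$ correction does not contribute and $[t^{k}]D(-t)=-\tfrac{2}{4^{p}}[t^{k}]R(t)<0$; equivalently $(-1)^{k}d_k<0$, so each such $d_k$ is nonzero with sign $(-1)^{k-1}$, positive for odd $k$ and negative for even $k$. Note the positivity of $R$ makes this immediate, so no factorization into $\tan^2$ terms is required.

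Finally I would tally signs: one $+1$; the $\lfloor p/2\rfloor$ entries $(-1)^{j}c_{p,j}$, whose even-$j$ members number $\lfloor\lfloor p/2\rfloor/2\rfloor=\lfloor p/4\rfloor$ and are positive; the $p-1$ entries $d_1,\dots,d_{p-1}$, whose odd-$k$ members number $\lfloor p/2\rfloor$ and are positive; and the $2\times2$ block contributing one eigenvalue of each sign. This yields $N(\Delta_p)=1+\lfloor p/2\rfloor+(p-1)+2=p+\lfloor p/2\rfloor+2$ and $N^{+}(\Delta_p)=1+\lfloor p/4\rfloor+\lfloor p/2\rfloor+1=\lfloor p/2\rfloor+\lfloor p/4\rfloor+2$, as claimed. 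The hard part will be the bookkeeping in the expansion, namely verifying that the only off-diagonal coupling in $M$ is the single $(z_1z_2)^{p}$–$(z_1^{2p}+z_2^{2p})$ block, so that the eigenvalue count genuinely reduces to the signs of diagonal entries; once the matrix is in this form, the sign of $d_k$ follows from the positivity of $R$ and the count is routine.
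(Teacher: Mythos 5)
Your argument is correct and, in its skeleton, identical to the paper's: the same reduction via Theorem \ref{Dihedral}, the same list of invariant polynomials $z_1^p+z_2^p$, $z_1^jz_2^j(z_1^p+z_2^p)$, $(z_1z_2)^k$, $z_1^{2p}+z_2^{2p}$, the same observation that the resulting Hermitian matrix is diagonal except for a single $2\times 2$ block of signature $(1,1)$ coupling $(z_1z_2)^p$ to $z_1^{2p}+z_2^{2p}$, and the same final tally. The only genuine divergence is how you determine the signs of the coefficients $d_k$ of $(z_1z_2\bar z_1\bar z_2)^k$: you import the generating-function technique from the binary dihedral section (the analogue of Proposition \ref{proposition:dksign}), substituting $t\mapsto -t$ and invoking the positivity of the coefficients of $R(t)=\sum_{j=0}^{p}\binom{2p}{2j}(1+4t)^{j}$. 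The paper is more pedestrian at this step: it expands $2S-S^2$ directly and reads off $d_k=(-1)^{k-1}E_k$ with $E_k=\sum_{a+b=k}c_{p,a}c_{p,b}+2c_{p,k}$ (the summand $2c_{p,k}$ present only for $k\le\lfloor p/2\rfloor$), which is visibly a positive integer because the $c_{p,j}$ are positive by Theorem \ref{thm:cyclicproposition}; no analytic identity is needed, since here---unlike in the binary dihedral case---both factors in the square carry the same sign pattern in each degree, so no cancellation can occur. Both routes are valid; your observation that the positivity of $R$ already suffices (no $\tan^2$ factorization) is correct, and your version yields a closed form for $D(t)$ as a bonus, while the paper's gets the nonvanishing and sign of each $d_k$ with less machinery.
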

\begin{proof}  Recall \begin{equation} \label{e:fpminus1}f_{p,p-1}(x,y)=x^p + y^p + \sum_{j=1}^{\left \lfloor \frac{p}{2} \right \rfloor}{(-1)^j c_{p,j} x^j y^j}.\end{equation}  Let $$B_p(x,y)=\sum_{j=1}^{\left \lfloor \frac{p}{2} \right \rfloor}{(-1)^j c_{p,j} x^j y^j}.$$

We invoke Theorem \ref{Dihedral} to decompose $\Phi_{\Delta_p}$; namely,
\begin{equation} \label{e:decomp}
\begin{split}
\Phi_{\Delta_p}(z,\bar{z}) &=  (z_1 \bar{z_1})^p +(z_2 \bar{z_2})^p + B_p(z_1 \bar{z_1},z_2 \bar{z_2})+(z_2 \bar{z_1})^p +(z_1 \bar{z_2})^p + B_p(z_2 \bar{z_1},z_1 \bar{z_2}) \\
& -(z_1 \bar{z_1})^p (f_{p,p-1}(z_2 \bar{z_1}, z_1 \bar{z_2}))-(z_2 \bar{z_2})^p(f_{p,p-1}(z_2 \bar{z_1}, z_1 \bar{z_2}))\\
&- (z_2 \bar{z_1})^pB_p(z_1 \bar{z_1}, z_2 \bar{z_2}) -(z_1 \bar{z_2})^p B_p(z_1 \bar{z_1}, z_2 \bar{z_2})\\
&-  B_p(z_1 \bar{z_1}, z_2 \bar{z_2})B_p(z_2 \bar{z_1}, z_1 \bar{z_2}).
\end{split}
\end{equation}

First notice $$B_p(z_1 \bar{z_1},z_2 \bar{z_2})+B_p(z_2 \bar{z_1},z_1 \bar{z_2})=2 B_p(z_1 \bar{z_1},z_2 \bar{z_2}).$$

Second we expand the last term in \eqref{e:decomp} to get
$$B_p(z_1 \bar{z_1}, z_2 \bar{z_2})B_p(z_2 \bar{z_1}, z_1 \bar{z_2})= \sum_{k=2}^{2 \left \lfloor \frac{p}{2} \right \rfloor}{(-1)^k \sum_{\substack{a+b=k\\ 1\leq a,b \leq \left \lfloor \frac{p}{2} \right \rfloor}}{c_{p,a}c_{p,b}}(z_1 \bar{z_1} z_2 \bar{z_2})^k}.$$

Define $$E_k=
\sum_{\substack{a+b=k\\ 1\leq a,b \leq \left \lfloor \frac{p}{2} \right \rfloor}}{c_{p,a}c_{p,b}}+2c_{p,k}$$ for $1 \leq k \leq \left \lfloor \frac{p}{2} \right \rfloor$, and $$E_k=
\sum_{\substack{a+b=k\\ 1\leq a,b \leq \left \lfloor \frac{p}{2} \right \rfloor}}{c_{p,a}c_{p,b}}$$ for $\left \lfloor \frac{p}{2} \right \rfloor < k \leq 2 \left \lfloor \frac{p}{2} \right \rfloor$.

Observe that $$E_k>0$$ for all $1 \leq k \leq 2 \left \lfloor \frac{p}{2} \right \rfloor$.

Now we want to write $\Phi_{\Delta_p}$ in terms of invariant polynomials.  The polynomials $$z_1^p+z_2^p,\;  z_1^j z_2^j,\; z_1^j z_2^j(z_1^p+z_2^p), \; z_1^{2p}+z_2^{2p}$$ are linearly independent and invariant under the $D_p$-action.  Writing $\Phi_{\Delta_p}$ in terms of these invariant polynomials we get
\begin{eqnarray*}
\Phi_{\Delta_p}(z,\bar{z}) &=& (z_1^p+z_2^p)(\bar{z_1}^p+\bar{z_2}^p) + \sum_{k=1}^{2 \left \lfloor \frac{p}{2} \right \rfloor}{(-1)^{k+1}E_k(z_1 z_2 \bar{z_1} \bar{z_2})^k}-z_1^p z_2^p(\bar{z_1}^{2p}+\bar{z_2}^{2p})\\
& &  - \bar{z_1}^p \bar{z_2}^p (z_1^{2p}+z_2^{2p})+\sum_{j=1}^{\left \lfloor \frac{p}{2} \right \rfloor}{(-1)^{j} c_{p,j} z_1^j z_2^j (z_1^p +z_2^p)\bar{z_1}^j \bar{z_2}^j(\bar{z_1}^p+\bar{z_2}^p)}.\\
\end{eqnarray*}

The underlying Hermitian matrix is nearly diagonal; we have only 2 non-diagonal terms.  We now explicitly write out the polynomial in matrix form.  Let
$$b=\begin{pmatrix}
z_1^{p}+z_2^{p}\\
z_1^j z_2^j(z_1^{p}+z_2^{p})\\
z_1^{k} z_2^{k}\\
z_1^{2p}+z_2^{2p}\\
\end{pmatrix}$$  for $1\leq j \leq \left \lfloor \frac{p}{2} \right \rfloor$ and $1\leq k \leq p$. Also let $$H_p=\begin{pmatrix}
1 & 0   & 0 & 0\\
0 & A_{p,1} & 0 & 0\\
0 & 0 & A_{p,2} & 0\\
0 & 0 & 0   & A_{p,3}\\
\end{pmatrix}.$$   The $\left \lfloor \frac{p}{2} \right \rfloor$ by $\left \lfloor \frac{p}{2} \right \rfloor$ diagonal matrix $A_{p,1}$ has diagonal entries given by $$(A_{p,1})_{jj}=(-1)^{j} c_{p,j}.$$  Next the matrix $A_{p,2}$ is $p$ by $p$ diagonal with diagonal entries given by $$(A_{p,2})_{kk}=(-1)^{k+1}E_k.$$  Finally the matrix 
$$A_{p,3}=\begin{pmatrix}
0 & -1\\
-1& 0\\
\end{pmatrix}$$ when $p$ is odd, and
$$A_{p,3}=\begin{pmatrix}
(-1)E_p& -1\\
-1& 0\\
\end{pmatrix}$$ when $p$ is even.

Thus $$\Phi_{\Delta_p}(z,\bar{z})=b^{*}H_pb.$$

Now we just count the eigenvalues of each diagonal submatrix.  The matrix $A_{p,1}$ has $\left \lfloor \frac{p}{2} \right \rfloor$ eigenvalues and $\left \lfloor \frac{p}{4} \right \rfloor$ positive eigenvalues.  The matrix $A_{p,2}$ has $p$ eigenvalues and $\left \lfloor \frac{p}{2} \right \rfloor$ positive eigenvalues.  In either case the matrix $A_{p,3}$ has 1 positive eigenvalue and 1 negative eigenvalue.  Finally adding up the eigenvalues for each submatrix we get the desired result.
\end{proof}

\begin{corollary}  \label{c:DRatio} The following positivity ratios hold for the dihedral group: $$L(\Delta_p)=\begin{cases}
\frac{1}{2}+\frac{2}{3p+4} & p \equiv 0 \bmod 4,\\
\frac{1}{2}+\frac{1}{3p+3} & p \equiv 1 \bmod 4,\\
\frac{1}{2}+\frac{1}{3p+4} & p \equiv 2 \bmod 4,\\
\frac{1}{2} & p \equiv 3 \bmod 4.
\end{cases}$$ Moreover, the limit as $p$ goes to infinity of the ratio equals $\frac{1}{2}$.
\end{corollary}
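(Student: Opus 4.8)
The plan is to substitute the two closed-form counts from Lemma \ref{l:DNumTerms} directly into the definition $L(\Delta_p)=\frac{N^{+}(\Delta_p)}{N(\Delta_p)}$ and then eliminate the floor functions by separating into the four residue classes of $p$ modulo $4$. Since both $N(\Delta_p)=p+\lfloor \frac{p}{2}\rfloor+2$ and $N^{+}(\Delta_p)=\lfloor \frac{p}{2}\rfloor+\lfloor \frac{p}{4}\rfloor+2$ depend only on $\lfloor \frac{p}{2}\rfloor$ and $\lfloor \frac{p}{4}\rfloor$, writing $p=4m+i$ with $i\in\{0,1,2,3\}$ removes every floor and renders $L(\Delta_p)$ a genuine rational function of $m$, which I can then compare term by term with the claimed expression.

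First I would treat $p=4m$, where $\lfloor \frac{p}{2}\rfloor=2m$ and $\lfloor \frac{p}{4}\rfloor=m$, so that $N=6m+2$ and $N^{+}=3m+2$; since $3p+4=12m+4$, a one-line simplification gives $\frac{1}{2}+\frac{2}{3p+4}=\frac{3m+2}{6m+2}=L(\Delta_p)$. The remaining three cases are mechanically identical: $p=4m+1$ yields $N=6m+3$ and $N^{+}=3m+2$; $p=4m+2$ yields $N=6m+5$ and $N^{+}=3m+3$; and $p=4m+3$ yields $N=6m+6$ and $N^{+}=3m+3$, so that the ratio collapses to exactly $\frac{1}{2}$ in the last case. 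In each of the first three cases the elementary algebra matches the stated form $\frac{1}{2}+\frac{c}{3p+d}$.

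The limit statement then requires no further work. In all four cases the computation above exhibits $L(\Delta_p)$ as $\frac{1}{2}$ plus a correction that is one of $\frac{2}{3p+4}$, $\frac{1}{3p+3}$, $\frac{1}{3p+4}$, or $0$, each of which tends to $0$ as $p\to\infty$; hence $\lim_{p\to\infty}L(\Delta_p)=\frac{1}{2}$. I anticipate no genuine obstacle here, as the entire substantive content of the corollary is already packaged in Lemma \ref{l:DNumTerms}; the only point demanding care is the bookkeeping of the two nested floor functions across the four residues, which is precisely why the corollary is stated case-by-case rather than by a single closed formula.
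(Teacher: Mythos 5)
Your proposal is correct and follows essentially the same route as the paper: substitute the counts from Lemma \ref{l:DNumTerms} into $L(\Delta_p)=N^{+}(\Delta_p)/N(\Delta_p)$ and resolve the floors by splitting on $p \bmod 4$ (the paper writes out only the case $p\equiv 0$, whereas you check all four, and your arithmetic in each case is right). No further comment is needed.
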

\begin{proof}  The four cases are similar.  We consider only the case where $p\equiv 0 \bmod 4$.  By Lemma \ref{l:DNumTerms} it follows that 
\begin{equation*}
N(\Delta_p)=p +\frac{p}{2} +2=\frac{3p+4}{2}
\end{equation*}
and
\begin{equation*}
N^{+}(\Delta_p)=\frac{p}{2} +\frac{p}{4}+2=\frac{3p+8}{4}.
\end{equation*}
Hence the ratio is:
$$L(\Delta_p)=\frac{N^{+}(\Delta_p)}{N(\Delta_p)}=\frac{3p+8}{2(3p+4)}=\frac{1}{2}+\frac{2}{3p+4}.$$
\end{proof}

\section{Orbit Polynomial and Chern Orbit Classes}
The goal of this section is to describe how the group-invariant Hermitian polynomials $\Phi_\Gamma$ arise in the context of representation theory.  Theorem \ref{thm:chern} will express the invariant polynomial $\Phi_\Gamma$ in terms of the orbit Chern classes.  We begin by recalling some basic definitions.  In particular, we define the orbit polynomial and orbit Chern classes as in \cite{S0}.  

Let $\pi : G \to U(n)$ be a unitary representation of a finite group $G$.  Let $\mathbb{C}[z_1,\cdots, z_n]$ denote the polynomial algebra in $n$ variables over $\mathbb{C}$.  We define a group action on the polynomial algebra by \[(g \cdot h)(z_1,\cdots,z_n) = h(\pi(g^{-1})(z_1,\cdots,z_n))\] where $h\in \mathbb{C}[z_1,\cdots, z_n]$ and $g \in G$.  The set of fixed points of this action is the set of group-invariant polynomials in $\mathbb{C}[z_1,\cdots,z_n]$.  Denote the set of fixed points of the action by $$\mathbb{C}[z_1,\cdots,z_n]^{G} = \left\{ h\in \mathbb{C}[z_1,\cdots,z_n] : g \cdot h = h \; \forall g \in G \right\}.$$  Despite this notation the set of fixed points depends on the representation of the group.

Define $G \cdot h$ to be the $G$-orbit corresponding to $h \in \mathbb{C}[z_1, \cdots,z_n]$.  Following \cite{S0} define the orbit polynomial of $G \cdot h$ by $$\phi_{G\cdot h}(X)= \prod_{b \in G \cdot h}{\left( X+b \right)}.$$  Expanding the product we get $$\phi_{G \cdot h}(X)=\sum_{a+b=\left| G \right|}{c_a(G \cdot h)X^b}$$ where $c_a(G \cdot h) \in \mathbb{C}[z_1,\cdots,z_n]^{G}$ are called the orbit Chern classes of the orbit $G \cdot h$.  The definition of orbit Chern class agrees with the usual topological definition of Chern class; this construction is given in \cite{SS}.

Now we restrict our attention to the case $n=2$, $G$ is a cyclic group of order $p$, and the representation is given by $\pi(G)= \Gamma(p,q)$.  Let $h=-(z_1+z_2)$, then $G \cdot h = \left\{ -\omega^j z_1 - \omega^{q j} z_2: j = 0, \cdots p-1 \right\}$.  The orbit polynomial $\phi_{G\cdot h}(X)= \prod_{j=0}^{p-1}{\left( X-\omega^j z_1 - \omega^{q j} z_2 \right)}.$  Thus the orbit polynomial evaluated at 1 is $f_{p,q}$.  Taking the total Chern class of the orbit $G\cdot h$ is exactly $f_{p,q}$.

We can polarize $\Phi_{\Gamma}$; we treat $z$ and $\bar{z}$ as independent variables.  Thus we write $$\Phi_{\Gamma}(z, \bar{w})=1-\prod_{\gamma \in \Gamma}{\left(1-\langle \gamma z, w \rangle \right)}.$$  If we set $w= (1,1,\cdots,1)$, then $\Phi_{\Gamma}(z, \bar{w})=1-\prod_{\gamma \in \Gamma}{\left(1-\sum_{j=1}^{n}{\gamma z_j \bar{w_j}} \right)},$  which is exactly the alternating sum of orbit Chern classes of the orbit corresponding to $z_1+z_2+\cdots+z_n$.  Therefore we have the following theorem.

\begin{theorem}  \label{thm:chern}  Let $\pi : G \to U(n)$ be a faithful, unitary representation of the finite group $G$.  Put $\Gamma=\pi(G)$.  Then $$\Phi_{\Gamma}(z, \bar{z})=\sum_{j=1}^{p}{(-1)^{j-1} c_j(G \cdot (z_1+\cdots +z_n))}.$$
\end{theorem}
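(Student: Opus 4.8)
The plan is to reduce the claimed identity to the defining expansion of the orbit polynomial and then to read off the alternating signs from the elementary symmetric functions. First I would polarize $\Phi_\Gamma$ exactly as in the paragraph preceding the theorem, writing $\Phi_\Gamma(z,\bar w) = 1 - \prod_{\gamma\in\Gamma}(1 - \langle\gamma z, w\rangle)$ and then specializing $w = (1,\ldots,1)$, so that $\langle\gamma z, w\rangle = \sum_{k}(\gamma z)_k =: \ell_\gamma(z)$ becomes a linear form in $z$ alone. The purpose of this specialization is to convert the Hermitian polynomial into a genuine polynomial in $z$, which is the natural home for the orbit Chern classes.

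Next I would identify the orbit. For $h = z_1 + \cdots + z_n$ the action $(g\cdot h)(z) = h(\pi(g^{-1})z)$ gives $g\cdot h = \ell_{\gamma^{-1}}$ with $\gamma = \pi(g)$, so as $g$ runs over $G$ (equivalently $\gamma^{-1}$ runs over $\Gamma$, using that $\pi$ is faithful so that $|\Gamma| = |G| = p$) the orbit $G\cdot h$ is precisely $\{\ell_\gamma : \gamma\in\Gamma\}$. Hence the orbit polynomial is $\phi_{G\cdot h}(X) = \prod_{\gamma}(X + \ell_\gamma)$, and comparing with the expansion $\phi_{G\cdot h}(X) = \sum_{a+b=|G|} c_a X^b$ shows that the orbit Chern classes are $c_j(G\cdot h) = e_j(\{\ell_\gamma\})$, the $j$-th elementary symmetric function of the orbit, with $c_0 = 1$.

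The key algebraic step is then to rewrite the product appearing in $\Phi_\Gamma$. Since $1 - \ell_\gamma = 1 + (-\ell_\gamma)$, I would use $e_j(\{-\ell_\gamma\}) = (-1)^j e_j(\{\ell_\gamma\}) = (-1)^j c_j$ together with $\prod_\gamma(1 + (-\ell_\gamma)) = \sum_{j=0}^p e_j(\{-\ell_\gamma\})$ to obtain $\prod_\gamma(1 - \ell_\gamma) = \sum_{j=0}^p (-1)^j c_j = 1 + \sum_{j=1}^p (-1)^j c_j$. Substituting into $\Phi_\Gamma = 1 - \prod_\gamma(1 - \ell_\gamma)$ and cancelling the constant terms yields $\Phi_\Gamma = -\sum_{j=1}^p(-1)^j c_j = \sum_{j=1}^p(-1)^{j-1}c_j$, which is exactly the claimed formula; note that it is precisely the decision to take $h$ with a positive sign that produces the alternation $(-1)^{j-1}$.

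I expect the main obstacle to be conceptual rather than computational, namely reconciling the two sides. The left side $\Phi_\Gamma(z,\bar z)$ is a Hermitian polynomial, while the orbit Chern classes on the right are ordinary polynomials in $z$, so the identity really lives at the level of the polarized polynomial with $w = (1,\ldots,1)$; I would make this specialization explicit at the outset and interpret the statement accordingly. A secondary point to verify is that the orbit $G\cdot h$ genuinely has $p$ elements, so that the Chern classes $c_1,\ldots,c_p$ and the product over $\Gamma$ have matching lengths; this is where faithfulness of $\pi$ enters, and I would confirm that the linear forms $\ell_\gamma$ are distinct, as they manifestly are in the cyclic model $\ell_j = \omega^j z_1 + \omega^{qj} z_2$ used earlier in the section. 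Once these matchings are in place, the sign computation is routine.
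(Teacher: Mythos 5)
Your argument is essentially the paper's own: polarize $\Phi_\Gamma$, specialize $w=(1,\dots,1)$ so the product becomes the orbit polynomial $\phi_{G\cdot h}$ evaluated at $X=1$, and read off the alternating signs from the elementary symmetric functions of the linear forms $\ell_\gamma$. Your explicit sign bookkeeping and your flagging of the need for the $p$ forms $\ell_\gamma$ to be distinct (so that the orbit has $|G|$ elements and the Chern classes $c_1,\dots,c_p$ all exist) are, if anything, more careful than the paper's one-paragraph justification.
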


In particular we have the following corollary.

\begin{corollary}  \label{c:last} Suppose $\pi : G \to \Gamma(p,q)$ is the representation given above, then $$f_{p,q}(x,y)=\sum_{j=1}^{p}{(-1)^{j-1} c_j(G \cdot (x+y))}.$$
\end{corollary}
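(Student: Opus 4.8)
The plan is to read off the corollary as the two-variable, diagonal specialization of Theorem \ref{thm:chern}, taking $n=2$, $G=C_p$, and $\pi : C_p \to \Gamma(p,q)$. First I would write down the orbit explicitly. Since the action on polynomials is $(g\cdot h)(z) = h(\pi(g^{-1})z)$ and $\pi(s) = \operatorname{diag}(\omega,\omega^q)$, applying the powers of $s$ to $h = z_1 + z_2$ yields, after relabeling $z_1 = x$, $z_2 = y$ and reindexing by primitivity of $\omega$, the orbit
$$G\cdot(x+y) = \{\,\omega^{j}x + \omega^{qj}y : j = 0,\ldots,p-1\,\}.$$
Write $b_j = \omega^j x + \omega^{qj} y$ for the orbit elements.

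The heart of the argument is the generating-function identity between the orbit Chern classes and the product defining $f_{p,q}$. By definition $\phi_{G\cdot h}(X) = \prod_{j=0}^{p-1}(X + b_j) = \sum_{a=0}^{p} c_a(G\cdot(x+y))\,X^{p-a}$, so each orbit Chern class $c_a$ equals the elementary symmetric polynomial $e_a(b_0,\ldots,b_{p-1})$. Hence the univariate generating series satisfies $\sum_{a=0}^{p} c_a(G\cdot(x+y))\,t^{a} = \prod_{j=0}^{p-1}(1 + t\,b_j)$, and evaluating at $t = -1$ gives
$$\sum_{a=0}^{p}(-1)^a c_a(G\cdot(x+y)) = \prod_{j=0}^{p-1}(1 - b_j) = \prod_{j=0}^{p-1}\bigl(1 - \omega^j x - \omega^{qj} y\bigr).$$
Since $c_0 = 1$, isolating the $a=0$ term and invoking the defining formula \eqref{e:fpq} for $f_{p,q}$ yields
$$\sum_{j=1}^{p}(-1)^{j-1} c_j(G\cdot(x+y)) = 1 - \prod_{j=0}^{p-1}\bigl(1 - \omega^j x - \omega^{qj} y\bigr) = f_{p,q}(x,y),$$
which is the claim.

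Because the statement is a corollary, there is no substantive difficulty; the computation is the same one that proves Theorem \ref{thm:chern}, now carried out concretely for the diagonal group in two variables. The only points requiring care are bookkeeping: verifying that the sign choice $h = z_1 + z_2$ (versus $-(z_1+z_2)$ used in the preceding discussion) feeds through the signs $(-1)^{j-1}$ correctly, and confirming that the holomorphic arguments $x,y$ of $f_{p,q}$ — rather than $|z_1|^2,|z_2|^2$ — are the correct variables for the orbit $G\cdot(x+y)$, which is forced by the polarization $w=(1,\ldots,1)$ underlying Theorem \ref{thm:chern}.
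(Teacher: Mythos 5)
Your proof is correct and follows essentially the same route as the paper: the paper obtains the corollary by specializing Theorem \ref{thm:chern}, having already carried out your exact computation (the orbit polynomial $\prod_j(X-\omega^j z_1-\omega^{qj}z_2)$ evaluated at $X=1$ giving $1-f_{p,q}$) in the discussion preceding that theorem. Your sign bookkeeping between $h=z_1+z_2$ and the paper's $h=-(z_1+z_2)$, and the evaluation of the elementary-symmetric generating function at $t=-1$, match the intended argument.
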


\newpage

\appendix
\section{Source Code for Computing Signature Pairs in Mathematica}

Let $\Gamma$ be a finite subgroup of $U(2)$, and recall \begin{equation*}\Phi_{\Gamma}(z, \bar{w})=1-\prod_{\gamma \in \Gamma}{\left(1-\langle \gamma z, w \rangle \right)}.\end{equation*}  We introduce the Mathematica \cite{Mathematica} function GroupSignaturePair.  This function uses standard Mathematica commands to compute the eigenvalues of $\Phi_{\Gamma}$.  This function takes a list of the group elements of $\Gamma$ and returns a list of the eigenvalues of the underlying hermitian matrix of the polynomial $\Phi_{\Gamma}$.  Computing signature pairs in this way is very memory intensive.  To improve performance, one can use the Mathematica command $N[]$ to numerically find the eigenvalues.

\begin{verbatim}
GroupSignaturePair[group_] := 
Module[{matrix, hermitianmatrix, poly, order, eigenvalues},
  poly = First[
Expand[1 - Product[1 - 
Transpose[(L.{{z1}, {z2}})].{w1, w2}, {L, group}]]];
  order = Length[group];
  matrix = CoefficientList[poly, {z1, z2, w1, w2}];
  hermitianmatrix = Partition[Flatten[matrix], (order + 1)^2];
  eigenvalues = Eigenvalues[hermitianmatrix];
  Return[eigenvalues]]
\end{verbatim}

\section*{Acknowledgements}
I acknowledge support from NSF grant DMS 08-38434 ``EMSW21-MCTP: Research Experience for Graduate Students" and from NSF grant DMS 07-53978 of John D'Angelo.  I also want to thank him for sharing this interesting problem with me, for many helpful discussions about these ideas, and for providing the statement and proof of Lemma 3.3.  I would also like to thank the organizers of the Park City Math Institute for giving me the opportunity to give a short talk on this topic at the 2008 summer session on algebraic and analytic geometry.  The results of this paper and similar results for other groups will appear in my doctoral thesis at the University of Illinois, Urbana-Champaign.  I would also like to thank the referee for carefully reading this paper and for giving many useful comments.

\bibliographystyle{plain}
\bibliography{References}
\end{document}